    \newcommand{\argmin}{\mathop{\rm argmin}}
    \newcommand{\argmax}{\mathop{\rm argmax}}
    \newcommand{\R}{\mathfrak{R}}
    \newcommand{\D}{\mathfrak{D}}
    \newcommand{\Cov}{\mathrm{Cov}}
    \newcommand{\wh}{\widehat}
    \newcommand{\X}{\mathfrak{X}}
    \newcommand{\Y}{\mathfrak{Y}}
    \newcommand{\Z}{\mathfrak{Z}}
    \newcommand{\lefts}[2]{{\vphantom{#2}}_{#1}{#2}}
    \newcommand{\bbR}{{\mathbb R}}
    \newcommand{\blangle}{\big\langle}
    \newcommand{\brangle}{\big\rangle}
\newcommand{\note}[1]
{$^{(!)}$\marginpar[{\hfill\tiny{\sf{#1}}}]{\tiny{\sf{(!) #1}}}}
\newtheorem{theorem}{Theorem}[section]
\newtheorem{lemma}{Lemma}[section]
\newtheorem{proposition}{Proposition}[section]
\newtheorem{definition}{Definition}[section]
\newtheorem{corollary}{Corollary}[section]
\numberwithin{equation}{section}
\newenvironment{proof}[1][\sc Proof.]{\begin{trivlist}
\item[\hskip \labelsep {\bfseries #1}]}{\end{trivlist}}
\newenvironment{remark}[1][Remark]{\begin{trivlist}
\item[\hskip \labelsep {\bfseries #1}]}{\end{trivlist}}
\newcommand{\qed}{\nobreak \ifvmode \relax \else
      \ifdim\lastskip<1.5em \hskip-\lastskip
      \hskip1.5em plus0em minus0.5em \fi \nobreak
      \vrule height0.75em width0.5em depth0.25em\fi}
\begin{document}
\begin{frontmatter}
\title{Theoretical Analysis of Nonparametric Filament Estimation}
\runtitle{Nonparametric filament estimation}
%
\author{\fnms{Wanli} \snm{Qiao}}
\and
    \author{ \fnms{Wolfgang} \snm{Polonik} \ead[label=wp]{wpolonik@ucdavis.edu}}
    \affiliation{University of California, Davis}
    \address{Department of Statistics\\
    University of California\\
    One Shields Ave.\\
    Davis, CA, 95616,
    USA}
     \thankstext{t1}{This research was supported by the NSF-grant DMS \#1107206\newline
AMS 2000 subject classifications. Primary 62G20, Secondary 62G05.\newline
Keywords and phrases. Extreme value distribution, nonparametric curve estimation, integral curves, kernel density estimation.}

\date{\today}
\maketitle
\begin{abstract} \noindent This paper provides a rigorous study of the nonparametric estimation of filaments or ridge lines of a probability density $f$. Points on the filament are considered as local extrema of the density when traversing the support of $f$ along the integral curve driven by the vector field of second eigenvectors of the Hessian of $f$. We `parametrize' points on the filaments by such integral curves, and thus both the estimation of integral curves and of filaments will be considered via a plug-in method using kernel density estimation. We establish rates of convergence and asymptotic distribution results for the estimation of both the integral curves and the filaments. The main theoretical result establishes the asymptotic distribution of the uniform deviation of the estimated filament from its theoretical counterpart. This result utilizes the extreme value behavior of non-stationary Gaussian processes indexed by manifolds $M_h, h \in(0,1]$ as $h \to 0$.
\end{abstract}
%
%
%

\end{frontmatter}
\section{Introduction}
Intuitively, a filament or a ridge line is a curve or a lower-dimensional manifold at which the height of a density is higher than in surrounding areas when looking in the `right direction' - a precise definition is given below. For instance, blood vessels, road system, and fault lines can be modeled as filaments. One of the most prominent instances of data sets modeled by means of filaments is the so-called cosmic web, consisting of location of galaxies (Novikov et al. 2006). Cosmologist are very interested in finding a rigorous topological description of this geometric structure because of its relation to the existence of dark matter (Dietrich et al. 2012). In fact, a large body of work on the estimation of filaments and the extraction of their topological structures exists in the corresponding cosmology literature, such as Barrow et al. (1985), Bharadwaj et al. (2004) and Pimbblet et al. (2004). Much of this work is missing theoretical underpinning, however. \\[8pt]
The goal of this paper is to theoretically study the nonparametric estimation of filaments and to develop rigorous theory, in particular distributional results, supporting the proposed estimation approach based on kernel density estimation. Integral curves (cf. (\ref{def-integral-curve}) below) are used to find and to `parametrize' filaments, and thus the estimation of integral curves comes into play here naturally. \\[8pt]
Earlier work on ridge estimation in a statistical context includes  Hall et al. (1992), where several geometric measures of `ridgeness' are defined and investigated. More recent work includes Genovese et al. (2009, 2012a,c) and Chen et al. (2013). Filament estimation is related to several other geometrically motivated concepts, such as manifold learning (Genovese et al. 2012b), investigating modality, edge detection, principle curves (Hastie et al. 1989), locally defined principal curves and surfaces (Ozertem et al. 2011), etc. More recently, the concept of persistent homology explicitly combines statistical mode and antimode estimation with topological concepts (e.g. chapter 5 of Genovese et al. 2013). From a more general perspective all these methods are attempting to find structure in multivariate data with geometric and topological ideas entering the definition of the methodology explicitly (cf. Genovese et al. 2012a).\\[8pt]
The lack of supporting theory, which we address in this paper, is only one challenge of filament estimation. Other challenges include the design of algorithms for tracking filaments. While the design of algorithms was part of this research, it is not included in this paper, but will be published elsewhere. However, geometric algorithms for finding modes or ridge points tend to be based on estimating integral curves (e.g. the well-known mean-shift algorithm estimates the integral curves driven by the gradient (Fukunaga et al. 1975, Cheng 1995, and Comaniciu et al. 2002)). This motivated our study of the estimation of filaments through the lens of estimating integral curves.\\[8pt]
While the notion of a filament has an intuitive geometric interpretation, a rigorous definition is needed here. The definiton of filaments used here is intimately related to integral curves driven by the second eigenvectors of the Hessian matrices of the density function. Only the two-dimensional space will be considered here so that filaments and integral curves are curves in the plane. Extension to higher-dimensional space is possible but some technical problems may come into play. Also, as can be seen from the examples given above, the two-dimensional case covers many important applications of filament estimation. The following definition of filament points can for instance be found in Eberly (1995).
\begin{definition}[filament points in ${\mathbb R}^2$] \label{FilaDef1} Let $f: \mathbb{R}^2\mapsto\mathbb{R}$ be a twice differentiable function with gradient  $\nabla f(x)$ and Hessian matrix $\nabla^2f(x)$. Let $\lambda_2(x) \le \lambda_1(x)$ denote the eigenvalues of the Hessian with corresponding eigenvectors $V(x)$ and $V^\perp(x)$, respectively. A point $x$ is said to be a filament point if
\begin{align}\label{DEF1}
\blangle\nabla f(x),V(x)\brangle=0 \qquad \text{and} \qquad \lambda_2(x)<0.
\end{align}
\end{definition}
Geometrically, $\blangle\nabla f(x),V(x)\brangle$ and $V(x)^T\nabla^2 f(x)V(x)=  \lambda_2(x)\|V(x)\|^2 $ are first and second order directional derivative of $f(x)$ along $V(x)$. Condition (\ref{DEF1}) thus means that a filament point $x$ is a local mode of $f(x)$ along the direction $V(x)$. The idea of using the above characterization of a point on a filament for statistical purposes has been used independently by Genovese et al. (2014) and Chen et al. (2013, 2014a and 2014b).\\[8pt]
%
%
%
By our definition a point on a filament is an extremal point of $f$ when traversing along an integral curve driven by $V(x)$. The integral curve $\X_{x_0}: [-T_{min}, T_{max}] \to {\mathbb R}^2$ (with $T_{min}, T_{max} \geq 0$ and $T_{min}+T_{max}>0$) starting in $x_0 \in {\mathbb R}^2$ driven by $V(x)$ is given by the solution to the differential equation
\begin{align}\label{def-integral-curve}
\frac{d  \X_{x_0}(t)}{dt} = V(\X_{x_0}(t)),\qquad \X_{x_0}(0) = x_0.
\end{align}
Note that $-V(x)$ is also an eigenvector of $H(x)$ and for $\tilde{\X}_{x_0}$ satisfying 
\begin{align}\label{def-integral-curve-reverse}
\frac{d  \tilde{\X}_{x_0}(t)}{dt} = -V(\tilde{\X}_{x_0}(t)),\qquad \tilde{\X}_{x_0}(0) = x_0,
\end{align}
we have $\X_{x_0}(t) = \tilde{\X}_{x_0}(-t)$ for $t\in [-T_{min}, T_{max}].$\\[8pt]
Genovese et al. (2009) use integral curves driven by the gradient field to define a `path density', whose level sets then contain large portions of the filament. Rather than integral curves of gradients, we here use integral curves of the second eigenvector of the Hessian. \\[8pt]
The sampling model considered here consists of independent observations from the underlying pdf. Other sampling models for filament estimation or detection have been used in the statistical literature as well. For instance, Arias-Castro et al. (2006) define a filament as a specific curve (of finite length). Data are then sampled according to a uniform distribution on the curve and background noise is added. Genovese et al. (2012a) also start out with a sample from the curve (filament) but then allow some (small) deviation of the data from the filament.\\[8pt]
Suppose a filament $\mathcal{L}$ exists in the support of a density function $f:\mathbb{R}^2\rightarrow \mathbb{R}^+$. The goal is to find an estimate of $\mathcal{L}$ from a random sample $X_1, X_2, \cdots, X_n$ drawn from $f$, and to assess the reliability of the estimation. Let the first `time point' $t$ at which $\X_{x_0}(t)$ hits the filament $\mathcal{L}$  be denoted by $\theta_{x_0}$, i.e.
$$\X_{x_0}(\theta_{x_0})\in \mathcal{L}.$$
Starting points $x_0$ corresponding to different trajectories lead to different corresponding filament points. The estimation of the filament can be divided into two steps: Estimation of the trajectory $\X_{x_0}(t)$ and estimation of the parameter $\theta_{x_0}$ corresponding to the filament point defined through the trajectory $\X_{x_0}(t)$. Both these quantities will be estimated by plug-in estimates using a kernel density estimator. The corresponding estimates are denoted by $\hat{\X}_{x_0}$ and $\hat\theta_{x_0}$, respectively. The assessment of the uncertainty in the estimation of the filament point $\X_{x_0}(\theta_{x_0})$ through $\wh \X_{x_0}(\wh \theta_{x_0})$ is also based on these two sources of uncertainty, as illustrated in Figure~\ref{FilaEsti}. \\
\begin{figure}[H]
\centering
\includegraphics[width=9cm]{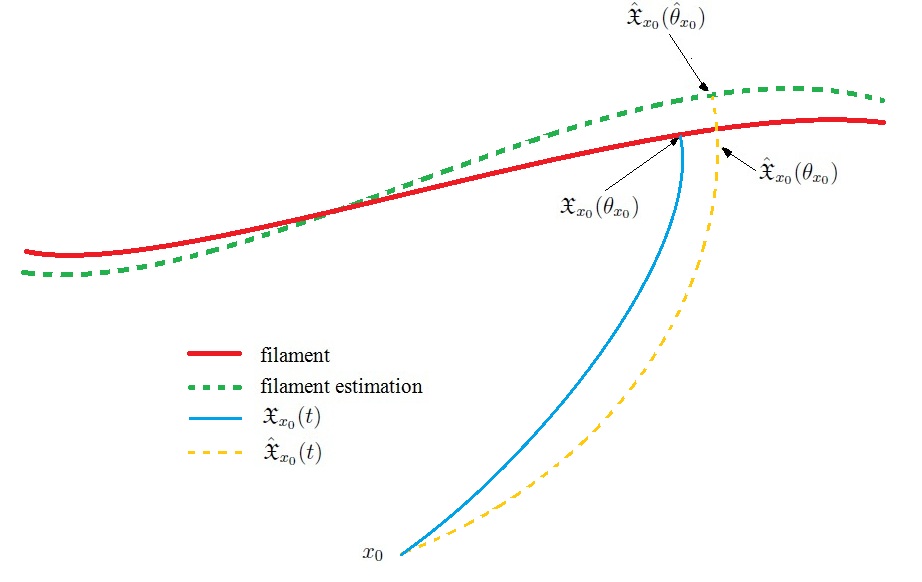}
\caption{Illustration of the integral curve $\X_{x_0}(t)$, its estimate $\hat\X_{x_0}(t)$, and of the estimation of a filament point $\X_{x_0}(\theta_{x_0})$ and its estimate $\hat\X_{x_0}(\hat\theta_{x_0})$.} \label{FilaEsti}
\end{figure}

In this paper we present three different types of results:
\begin{itemize}
\item[(i)] the estimation of the integral curve itself, i.e. we consider the asymptotic behavior of the properly normalized process $\widehat\X_{x_0}(t) - \X_{x_0}(t), t \in [-T_{min},T_{max}]$ with $T_{min}, T_{max}\geq 0$ and $T_{min}+T_{max}>0;$
\item[(ii)] the large sample behavior of  the estimator $\widehat{\theta}_{x_0};$ and
\item[(iii)] by combining results of type (i) and (ii) we will derive large sample behavior of the filament estimate $\wh \X_{x_0}(\wh\theta_{x_0})$. Our main result on filament estimation (Theorem~\ref{ConfBand}) gives the asymptotic distribution of the uniform deviation of the filament estimator. More precisely, we will provide conditions ensuring that:
\end{itemize}
%
{\em
There exists a function $g(x)$ depending on $f$ and on the kernel $K$  used to define our estimators, such that for any fixed $z$, we have

\begin{align*}
\lim_{n\rightarrow\infty}&P\bigg(\sup_{x_0\in\mathcal{G}}\bigg\|g(\X_{x_0}(\theta_{x_0}))\,\sqrt{nh^6}\Big(\hat \X_{x_0}(\hat \theta_{x_0})-\X_{x_0}(\theta_{x_0})\Big) \bigg\|< b_h(z)\bigg) = e^{-2\,e^{-z}},
\end{align*}
where
$ b_h(z)=\sqrt{2\log{h^{-1}}}+\frac{1}{\sqrt{2\log{h^{-1}}}}\big[z+c\big]$ with $c >0$ depending on $f, K$ and $\cal L$,  and ${\mathcal{G}}$ is some properly chosen subregion of $\bbR^2$ such that ${\cal L} = \{\X_{x_0}(\theta_{x_0}): \; x_0\in\cal G\}$, and $h$ denotes the bandwidth (see below).
}

\vspace*{0.5cm}

{\em Discussion:} (a) The results of type (i) and (ii) used to derive this main result are of independent interest. Note that Bickel and Rosenblatt (1973) discussed uniform absolute deviation of the univariate kernel density estimator from the density function and developed a confidence band for the density function. Rosenblatt (1976) extended the result to the multidimensional case. Our main result bears some similarity with these results, and we will borrow some ideas from this classical work for the proof of our result. \\[8pt]
(b) Notice that the filament points $\X_{x_0}(\theta_{x_0})$ and $\X_{x_1}(\theta_{x_1})$ are the same if the starting points $x_0$ and $x_1$ both lie on the same integral curve. However, the estimates for these two quantities that correspond to the same starting points are not the same. In other words, when $x_0$ is ranging over a (large) set we will have an entire class of estimates for each filament point! Of course we don't know which of the starting points lie on the same integral curve. However, asymptotically the maximum deviation over all these estimates behaves as if there were only a single starting point from each integral curve. In fact, as it turns out, the extreme value distribution in our main result (see above) only depends on the filament ${\mathcal{L}}$. This dependence is given through the constant $c > 0$ that is completely determined by ${\mathcal{L}}$ (cf. end of the proof of the main result Theorem~\ref{ConfBand}). \\[8pt]
The paper is organized as follows. Sections 2 presents the definition of our estimators. The main results on filament estimation and the estimation of integral curves driven by the second eigenvector of the Hessian are given in Section 3. Specifically, Theorem \ref{ConfBand} precisely states the main result indicated above. The proof uses an application of a limit result on the extreme value distribution of a sequence of non-stationary Gaussian fields on a growing manifold, which is proven in a companion paper by Qiao and Polonik (2015a) (see Theorem~\ref{ProbMain}). Section 3 also contains several other key results needed for the proof of the main result. The logical sequence of the results follows the order of Theorems \ref{Gaussian}, \ref{UniformPath}, \ref{Approx-FilaDiff}, \ref{UniformXDiff}, and then Theorem \ref{ConfBand} with auxiliary Lemma \ref{Theta} in-between. Another consequence of these results is the pointwise asymptotic normality of our filament estimator with rates depending on whether the gradient at the filament point is zero or not (Corollaries~\ref{distr-filament-est1} and \ref{distr-filament-est2}). Section~\ref{summary} presents a summary and some discussion, and all the proofs are delegated to section~\ref{proofs} and the technical supplement (Qiao and Polonik, 2015b), respectively.
\section{Notation and definition of the estimators}
Let $f:\mathbb{R}^2\rightarrow \mathbb{R}^+$ be a four times differentiable probability density function with corresponding cdf $F$, and let $f^{(i,j)}(x) = \frac{\partial^{i+j}f(x)}{\partial{x}^i_1\partial x^j_2}$ for $i, j\in \{0,1,2,\ldots\}$ and $i+j\leq 4$. Then we write the gradient of $f$ as $\nabla f(x) = (f^{(1,0)}(x),f^{(0,1)}(x))^T$, and the Hessian matrix of $f$ as
\vspace*{-0.7cm}

\begin{align*}
\nabla^2f(x)\equiv\left(
                                \begin{array}{cc}
                                  f^{(2,0)}(x) & f^{(1,1)}(x) \\
                                  f^{(1,1)}(x) & f^{(0,2)}(x)\\
                                \end{array}
\right).
\end{align*}
Further write
\begin{align*}
d^2f(x) = \left( f^{(2,0)}(x),\,f^{(1,1)}(x),\,f^{(0,2)}(x)\right)^T.
\end{align*}
Let $V(x)$ denote a second eigenvector of $\nabla ^2 f(x)$, which is assumed to have two distinct eigenvalues on an appropriate subset of the support of $f$. In this paper we will use the specific form of $V(x)$ given by
$$V(x) = G(d^2(f(x))),$$
where $G= (G_1,G_2)^T: \mathbb{R}^3\mapsto \mathbb{R}^2$ is
\begin{align}\label{GDef}
G(u,v,w)=\left(\begin{array}{c} 2u-2w+2v-2\sqrt{(w-u)^2+4v^2} \\w-u+4v-\sqrt{(w-u)^2+4v^2} \end{array}\right).
\end{align}
%
Details of constructing $G$ can be found in the supplemental material (Qiao and Polonik, 2015b). Since $f$ is four times continuously differentiable, $V(x)$ is twice continuously differentiable as long as the eigenvalues of Hessian $\nabla^2f(x)$ are distinct. There are different ways of choosing $V$ via $G$, e.g., $V(x)$ could have norm 1. All that matters here is that $V(x)$ is smooth and that $\|V(x)\|$ is bounded away from zero (and infinity). It is not difficult to verify that $V(x)$ so defined is in fact an eigenvector of the Hessian $\nabla^2 f(x)$ corresponding to its second eigenvalue\\[-20pt] 
\begin{align*}
\lambda_2(x)=J(d^2 f(x))
\end{align*}
where\\[-20pt]
\begin{align}\label{FDef}
J(u,v,w)=\frac{u+w-\sqrt{(u-w)^2+4v^2}}{2}.
\end{align}
%
%
%
%
%
{\bf The estimator of the integral curve $\X_{x_0}(t)$.}  Our estimator $\hat \X_{x_0}(t)$ of the integral curve $\X_{x_0}(t)$  is based on a plug-in estimator of the second eigenvector $V(x)$ of the Hessian, i.e. $\hat \X_{x_0}(t)$ is the solution to
\begin{align*}
\frac{d \hat\X_{x_0}(t)}{d t}=\hat V(\hat \X_{x_0}(t)), \quad \hat\X_{x_0}(0)=x_0,
\end{align*}
where $\hat V(x)$ is defined via a kernel estimator of the density. To be explicit, let $X_1, X_2,\dots$ 
be independent and identically distributed with density function $f$. The kernel density estimator of $f$ based on $X_1, \cdots ,X_n, n\geq 1$ is
\begin{align}
\hat f(x)=\frac{1}{nh^2}\sum_{i=1}^nK\bigg(\frac{x-X_i}{h}\bigg), \quad x\in\mathbb{R}^2,
\end{align}
where $K:\mathbb{R}^2\rightarrow \mathbb{R}^+$ is a four times differentiable kernel function and $h$  is the positive bandwidth (sometimes we also write $h_n$ rather than $h$ to indicate its dependence on $n$).
%
%
%
%
The corresponding plug-in kernel estimators of $V(x)$ and $\lambda_2(x)$ then are
\begin{align}
\hat V(x)=G(d^2\hat f(x)) \; \textrm{and} \; \hat \lambda_2(x)= J(d^2\hat f(x)),\quad x\in\mathbb{R}^2.
\end{align}
%
%
{\bf The estimator of the parameter $\boldsymbol{\theta}_{\mathbf {x_0}}$.}
%
%
Consider a compact set $\mathcal{H}$ such that $f(x) > 0$ on $\mathcal {H}^{\epsilon_0}$ for some $\epsilon_0>0$, where $\mathcal {H}^{\epsilon_0}$ denotes the $\epsilon_0$-enlarged set of ${\cal H}$, i.e. the union of all the open balls of radius $\epsilon_0$ with midpoints in ${\cal H}$. 
Let further $\mathcal{L}$ denote the target filament. For any $a,b\in\mathbb{R}$, we denote $a\wedge b=\min(a,b)$ and $a\vee b = \max(a, b)$. \\


We denote $x_0\rightsquigarrow\mathcal{L}$ if there exists a $t_0\in\mathbb{R}$ with ${\X}_{x_0}(t_0) \in {\cal L}$ and $\{{\X}_{x_0}(t): \; 0 \wedge t_0 \le t \le 0\vee t_0\} \subset {\cal H}$. We define
%
%
\begin{align*}
\Theta_{x_0}=\big\{t: \; \blangle\nabla f(\X_{x_0}(t)), V(\X_{x_0}(t))\brangle = 0,\, \lambda_2(\X_{x_0}(t))<0\big\}.
\end{align*}
For $a^*>0$ let 
\begin{align}\label{DefG}
\mathcal{G}({\cal L}, a^*) := \big\{\X_{x_0}(t): x_0 \in  {\cal L}, -a^* \leq t \leq a^* \big\},
\end{align}
which is assumed to be a subset of $\cal H$. For simplicity we write $\mathcal{G}({\cal L}, a^*)$ as $\mathcal{G}$. By definition of $x_0 \rightsquigarrow {\cal L}$, for $x_0 \in \mathcal{G}$ we have $\Theta_{x_0}\neq \emptyset$ and let
\begin{align}\label{theta_def}
\theta_{x_0} = \argmin_{t}\{|t|: \; t \in \Theta_{x_0}\}.
\end{align}
Suppose that we can choose $a^*$ such that $\theta_{x_0}$ is unique for any $x_0\in\mathcal{G}$. Note that $\mathcal{G}$ and $\mathcal{L}$ have such a relationship that
$\mathcal{L} = \{\X_{x_0}(\theta_{x_0}): \;\; x_0 \in \mathcal{G}\}$. This means that for $x_0 \in \mathcal{G}$ when traversing the path $\X_{x_0}$ we hit the filament ${\cal L}$ for the `first' time at `time' $|\theta_{x_0}|$. The estimator of $\theta_{x_0}$ is denoted by $\hat \theta_{x_0}$ and is defined as follows. Let
\begin{align*}
\wh \Theta_{x_0}=\big\{t: \; \blangle\nabla \hat f(\hat \X_{x_0}(t)), \hat V(\hat \X_{x_0}(t))\brangle = 0,\, \hat\lambda_2(\hat \X_{x_0}(t))<0\big\},
\end{align*}
and define\\[-25pt]
\begin{align} \label{def-thetahat}
\hat\theta_{x_0}=\begin{cases}
 \argmin_{t}\{|t|: \; t \in \wh \Theta_{x_0}\}, & \text{if } \wh \Theta_{x_0}\neq \emptyset\\
 0 & \text{if } \wh \Theta_{x_0}= \emptyset.
\end{cases}
\end{align}
If the minimizer here is not unique, then we just choose one of them as $\hat{\theta}_{x_0}.$ The probability of this happening is tending to zero under our assumptions. These assumptions also assure  that the probability of $\wh \Theta_{x_0}\neq \emptyset$ is tending to zero as $n\to \infty$ for $x\in \mathcal{G}$ (see Proposition~\ref{ThetaRate}).\vspace*{0.5cm} 

{\bf The estimator of a filament point  $\boldsymbol{\X_{x_0}(\theta_{x_0})}$} with $x_0 \rightsquigarrow {\cal L}$ is now given by
$$\hat\X_{x_0}(\hat{\theta}_{x_0}).$$
Our filament points (both estimates and theoretical) are parametrized by the starting value of the integral curves. In fact, we should rather think of the parametrization being induced by the corresponding integral curves, because any starting point on the same integral curve of course results in the same filament point. Since for each of the filament points there is exactly one integral curve passing through this point, this provides us a way to make pointwise comparisons. Our estimator of $\mathcal{L}$ is given by
\begin{align*}
\widehat{\mathcal{L}} = \{\hat \X_{x_0}(\hat\theta_{x_0}): x_0 \in \mathcal{G} \}.
\end{align*}
To formulate our main theorem we need the following additional notation and definitions. For a matrix $M$ and compatible vectors $v,w$ we denote $\langle v,w\rangle_M = v^T M w$. We further write $\|v\|^2_M = v^T M v$, which for $M$ the identity matrix is simplified to  $\|v\|^2$. For a vector field $W: \mathbb{R}^2\mapsto\mathbb{R}^3$ let $\mathbf{R}(W)$ denote the matrix given by $\mathbf{R}(W):=\int_{\mathbb{R}^2}W(x)W(x)^T dx \in \bbR^{3\times 3}$, assuming the integral is well defined, and let $\mathbf{R}:=\mathbf{R}(d^2 K).$ Further let
\begin{align}
\tilde G(x)&:=\nabla G(d^2 f(x)) \in \bbR^{2 \times 3} \qquad\text{and}\qquad A(x)=\tilde{G}(x)^T  \nabla f(x) \in \bbR^3,\label{GtildeDef}
\end{align}
and define the real-valued function $g(x)$ as
\begin{align}\label{def-g}
g(x) =  \frac{\tilde{a}^\prime(x)}{\sqrt{f(x)}\|V(x)\|\,\|A(x)\|_{\mathbf R}},
\end{align}
where\\[-25pt]
\begin{align}\label{def-atildeprime}
\tilde{a}^\prime(x) = \blangle\nabla f(x), V(x)\brangle_{\nabla V ( x)}+\lambda_2(x) \|V(x)\|^2.
\end{align}
Observe that $\tilde{a}^\prime(\X_{x_0}(\theta_{x_0})) = \frac{d}{dt}\, a_{x_0}(\theta_{x_0})$ with $a_{x_0}(t) = \blangle\nabla f(\X_{x_0}(t)), V(\X_{x_0}(t))\brangle.$ These quantities describe the behavior of $f(\X_{x_{0}}(t))$ at $t = \theta_{x_0}$, and thus they play an important role here.
Our assumptions given below assure that $g(x)$ is well defined on $\mathcal{H}$.

\section{Main Results}


\subsection{Assumptions and their discussion}\label{Assumptions}

\begin{itemize}
\item [(\bf{F}1)] $f$ is a four times continuously differentiable pdf. All of its first to fourth order partial derivatives are bounded.
\item[(\textbf{F}2)] $\mathcal {H}$ is compact such that $f(x) > 0$ on $\mathcal {H}^{\epsilon_0}$ for some $\epsilon_0>0$ and $\nabla^2f(x)$ has two distinct eigenvalues for $x\in\cal H$.  
%
%
\item[(\textbf{F}3)] $\mathcal{L}$ is a compact filament within $\mathcal{H}$ with $\mathcal{L} = \{\X_{x_0}(\theta_{x_0}): \;\; x_0 \in \mathcal{G}\}$, where $\theta_{x_0}$ is defined in (\ref{theta_def}) and $\mathcal{G}$ defined in (\ref{DefG}) is a subset of $\mathcal{H}$. We choose $a^*$ in (\ref{DefG}) such that $\theta_{x_0}$ is unique for any $x_0\in\mathcal{G}$.
%
\item[(\textbf{F}4)] There exists a $\gamma>0$ such that\\[-20pt] 
\begin{align*}
\inf_{x_0\in \mathcal{L}}\;\;\inf_{-a^*\leq s< u\leq a^*}\bigg\|\frac{1}{u-s}\int_s^uV(\X_{x_0}(\lambda))d\lambda\bigg\|\geq\gamma.
\end{align*}

%
%
\item[(\textbf{F}5)] $\blangle \nabla \blangle\nabla f(x), V(x)\brangle, \; V(x)\brangle \neq 0$ for all $x\in \mathcal {L}$.
\item[(\textbf{F}6)] $\{x\in\mathcal{H}: \lambda_2(x)=0, \blangle\nabla f(x), V(x)\brangle=0\}=\emptyset$.
\item[(\textbf{F}7)] $\nabla f(x)^T \tilde{G}(x)\neq0$ for $x\in{\cal L}$.\\[-10pt]
\item[(\textbf{K}1)] The kernel K is a symmetric probability density function with support being the unit ball in $\bbR^2$. All of its first to fourth order partial derivatives are bounded and $\int_{\mathbb{R}^2}K(x)xx^T dx=\mu_2(K){\rm I}_{2 \times 2}$ with $\mu_2(K)<\infty$.
\item[(\textbf{K}2)] $\mathbf{R}(d^2 K)<\infty$ where for $g: \mathbb{R}^2\mapsto\mathbb{R}^3$, $\mathbf{R}(g):=\int_{\mathbb{R}^2}g(x)g(x)^T dx$.
%
%
\item[(\textbf{K}3)] $\int [K^{(3,0)}(z)]^2dz\neq\int [K^{(1,2)}(z)]^2dz$.
\item[(\textbf{K}4)] For any open ball $\cal S$ with positive radius contained in $\mathscr{B}(0,1)$ the component functions of $\textbf{1}_\mathcal{S}(s)d^2 K(s)$ are linearly independent. \\[-10pt]

\item[(\textbf{H}1)] As $n\rightarrow0$, $h_n \downarrow 0$, $nh_n^8/(\log n)^3\rightarrow\infty$ and $nh_n^9\rightarrow\beta$ for some $\beta\geq0$.
\end{itemize}

\vspace*{0.2cm}

{\bf Discussion of the assumptions.}\\[7pt]
1. Assumption (\textbf{F}1) implies that $V(x)$ is Lipschitz continuous on $\mathbb{R}^2$. Since $\blangle\nabla f(x), V(x)\brangle=0$ on a filament, $\nabla \blangle\nabla f(x), V(x)\brangle$ provides a direction normal to the filament. Therefore Assumption (\textbf{F}1) implies that $\mathcal{L}$ is twice differentiable and has bounded curvature.\\[7pt]
2. Assumptions (\textbf{F}2) and (\textbf{F}6) are imposed to avoid the existence of ``degenerate'' filament points. Specifically,  assumption (\textbf{F}6) ensures the exclusion of points at which the first and second order directional derivatives of $f(x)$ along $V(x)$ are both zero. By assumption (\textbf{F}2), there exists a $\delta>0$ such that $\{d^2 f(x): x\in {\cal H}\}\subset \mathcal {Q}_{\delta}$, where
\begin{align}\label{Q-def}
\mathcal{Q}_{\delta}=\{(u,v,w)\in\mathbb{R}^3:|u-w|>\delta\;\; \textrm{or} \;\; |v|>\delta\},
\end{align}
since two eigenvalues of a $2\times2$ symmetric matrix are equal iff the matrix is a scaled identity matrix.\\[7pt]
3. (\textbf{F}7) in particular excludes flat parts on the filaments, i.e. $\|\nabla f(x)\| \ne 0,$ for $x \in {\cal L}$.\\[7pt]
4. The set  ${\mathcal{G}}$ defined in (\ref{DefG}) denotes the set of starting points of the integral curves, each of which uniquely corresponds to a filament point on ${\mathcal{L}}$. The uniqueness follows from the well-known fact that integral curves are non-overlapping except possibly at their endpoints, and our assumptions exclude the latter case. The set $\mathcal{G}$ is compact, because $[-a^*, a^*]\times \mathcal{L}$ is compact by ({\textbf F}3) and that the mapping $(t, x_0) \mapsto \X_{x_0}(t)$ is continuous as shown in the technical supplement (Qiao and Polonik, 2015b). Note that the choice of $a^*$ does not affect the asymptotic distribution result in our main theorem (cf. Theorem \ref{ConfBand}).\\[7pt]
%
%
5. Since $\{\X_{x_0}(s): \; x_0 \in \mathcal{G},\theta_{x_0} - a^*\leq s \leq \theta_{x_0} + a^*\} = {\cal G}$, the two sets $\{\X_{x_0}(s): \; x_0\in {\cal G},\; \theta_{x_0}-a^*\leq s \leq \theta_{x_0}+a^*\}$ and $\{\X_{x_0}(s): \; x_0\in {\cal L},\; -a^*\leq s \leq a^*\}$ are equal. Therefore assumption (\textbf{F}4) is equivalent to
\begin{align*}
\inf_{x_0\in \mathcal{G}}\;\;\inf_{\theta_{x_0}-a^*\leq s< u\leq \theta_{x_0}+a^*}\bigg\|\frac{1}{u-s}\int_s^uV(\X_{x_0}(\lambda))d\lambda\bigg\|\geq\gamma.
\end{align*} 
It will be satisfied, for instance, under the condition that the convex hull of $\cal G$ is a subset of $\cal H$. An assumption similar to ({\bf F}4) can also be found in Koltchinskii et al. (2007).\\[7pt] 
%
%
%
%
%
6. The geometric meaning of assumption (\textbf{F}5) is that the second eigenvector $V(x)$ of the Hessian $H(x)$ is not orthogonal to the normal direction at the filament, which is represented by $\nabla\blangle\nabla f(x), V(x)\brangle$. Assumption (\textbf{F}5) implies that $\blangle \nabla f( \X_{x_0}(t)), V( \X_{x_0}(t))\brangle$ as a function of $t$ is strictly monotone at $\theta_{x_0}$, i.e. it changes signs at $\theta_{x_0}$.\\[7pt]
7. Assumption (\textbf{K}4) means that there is no linear combination of the component functions of $d^2 K(s)$ whose roots constitute a set of positive Lebesgue measure. A kernel function $K$ satisfying assumptions (\textbf{K}1)--(\textbf{K}4) is given by
\begin{align*}
K(z)=\frac{6}{\pi}(1-\|z\|^2)^5\textbf{1}_{\mathscr{B}(0,1)}(z), \quad z\in\mathbb{R}^2.
\end{align*}
Let $z=(z_1,z_2)^T$. Then assumption (\textbf{K}4) can be verified by observing that
\begin{align*}
d^2K(z)=\frac{15}{\pi}(1-z_1^2-z_2^2)^3 \left(\begin{array}{c}
9z_1^2+z_2^2-1 \\
8z_1z_2-2 \\
z_1^2+9z_2^2-1\\
\end{array}\right).
\end{align*}
%
%
8. Below we study the properties of the kernel $K$ under the given assumptions. First note that by the symmetry of $K(\cdot)$ we have
\begin{align}
\int [K^{(2,1)}(z)]^2dz&=\int [K^{(1,2)}(z)]^2dz,\label{KEquality1}\\
\int [K^{(3,0)}(z)]^2dz&=\int [K^{(0,3)}(z)]^2dz.\label{KEquality2}
\end{align}
Denote $I(\{c_1,c_2\}, \{c_3,c_4\}):=\int K^{(c_1,c_2)}(z)K^{(c_3,c_4)}(z)dz$. Using integration by parts and assumption (\textbf{K}1), the value of $I(\{c_1,c_2\},\{c_3,c_4\})$ is equal to the value of the integrals in (\ref{KEquality1}) for  $(\{c_1,c_2\}, \{c_3,c_4\}) \in \{ (\{4,0\},\{0,2\}),$\, $(\{3,1\},\{1,1\}),$\, $(\{2,2\}, \{0,2\}) \}$, and $I(\{4,0\},\{2,0\})$ equals the value of the integrals in (\ref{KEquality2}). \\[7pt]
9. By standard arguments, for the second derivatives of the density the bias of the kernel estimator is of order $O(h^2)$, which under assumption (\textbf{H}1) is faster than $O_p\big(\frac{\log{n}}{nh^6}\big)$, i.e., the convergence rate of the stochastic part. Therefore the bias is absorbed into the stochastic variation, and the rate of the former does not appear in our theorems. 
%

\subsection{Filament estimation}

We first present our main result on filament estimation, which gives the asymptotic distribution of the uniform absolute deviation of the estimator of the filament from the target filament that is assumed to exist under our set-up. This main result is in the same spirit as the classical results by Bickel and Rosenblatt (1973) and Rosenblatt (1976) for kernel density estimates.
\begin{theorem}\label{ConfBand}
Suppose that $({\mathbf F}1) - ({\mathbf F}7), ({\mathbf K}1) - ({\mathbf K}4)$, and $({\mathbf H}1)$ hold. Then there exists a constant $c \in \bbR$ depending on $K,f$ and ${\mathcal{L}}$ such that for any $z \in \bbR$ we have with
\begin{align}\label{CDef}
b_h(z)=\sqrt{2\log{h^{-1}}}+\frac{1}{\sqrt{2\log{h^{-1}}}}\big[z+c\big],
\end{align}
that as $n \to \infty$
\begin{align}\label{ConfBand1}
P\bigg(\sup_{x_0\in\mathcal{G}}\big|g(\X_{x_0}(\theta_{x_0}))\big| \bigg\|\sqrt{nh^6}\Big(\hat \X_{x_0}(\hat \theta_{x_0})-\X_{x_0}(\theta_{x_0})\Big)\bigg\|<b_h(z)\bigg) \to e^{-2\,e^{-z}}.
\end{align}
\end{theorem}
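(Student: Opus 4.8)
The plan is to decompose the filament estimation error into the two sources announced in the introduction — the error in estimating the integral curve $\X_{x_0}$ and the error in estimating the stopping time $\theta_{x_0}$ — and then show that only the second source contributes to the leading-order uniform fluctuation, after which the extreme-value limit follows from the companion paper's result on non-stationary Gaussian fields on growing manifolds (Theorem~\ref{ProbMain}). Concretely, I would write
\begin{align*}
\hat\X_{x_0}(\hat\theta_{x_0}) - \X_{x_0}(\theta_{x_0})
= \big(\hat\X_{x_0}(\hat\theta_{x_0}) - \hat\X_{x_0}(\theta_{x_0})\big)
+ \big(\hat\X_{x_0}(\theta_{x_0}) - \X_{x_0}(\theta_{x_0})\big),
\end{align*}
and Taylor-expand the first bracket in $t$ around $\theta_{x_0}$, using $\tfrac{d}{dt}\hat\X_{x_0}(t)=\hat V(\hat\X_{x_0}(t))\to V(\X_{x_0}(\theta_{x_0}))$, so that the leading term is $(\hat\theta_{x_0}-\theta_{x_0})\,V(\X_{x_0}(\theta_{x_0}))$ plus a remainder controlled uniformly via Theorem~\ref{UniformXDiff} (for the integral-curve deviation) and Theorem~\ref{UniformPath}. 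The second bracket, the curve error evaluated at the \emph{fixed} time $\theta_{x_0}$, should by the rate results be of smaller order than $\sqrt{nh^6}$-normalized fluctuations of $\hat\theta_{x_0}-\theta_{x_0}$; this is where Assumption~(\textbf{H}1), and in particular the gap between $nh^8/(\log n)^3\to\infty$ and $nh^9\to\beta$, is used to make the bias and the curve-estimation noise asymptotically negligible relative to the $\theta$-term. Hence, uniformly over $x_0\in\mathcal{G}$,
\begin{align*}
\sqrt{nh^6}\,\big(\hat\X_{x_0}(\hat\theta_{x_0}) - \X_{x_0}(\theta_{x_0})\big)
= \sqrt{nh^6}\,(\hat\theta_{x_0}-\theta_{x_0})\,V(\X_{x_0}(\theta_{x_0})) + o_p\!\big(1/\sqrt{\log h^{-1}}\big),
\end{align*}
so that $\big|g(\X_{x_0}(\theta_{x_0}))\big|\,\|\sqrt{nh^6}(\hat\X_{x_0}(\hat\theta_{x_0})-\X_{x_0}(\theta_{x_0}))\|$ equals $|g(\X_{x_0}(\theta_{x_0}))|\,\|V(\X_{x_0}(\theta_{x_0}))\|\,\sqrt{nh^6}\,|\hat\theta_{x_0}-\theta_{x_0}|$ up to the same negligible error.

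Next I would linearize $\hat\theta_{x_0}-\theta_{x_0}$. Since $\theta_{x_0}$ solves $a_{x_0}(t):=\blangle\nabla f(\X_{x_0}(t)),V(\X_{x_0}(t))\brangle=0$ and $\hat\theta_{x_0}$ (approximately) solves the analogous equation with hatted quantities along $\hat\X_{x_0}$, the implicit-function / Taylor argument gives
\begin{align*}
\hat\theta_{x_0}-\theta_{x_0} \approx -\frac{\hat a_{x_0}(\theta_{x_0}) - a_{x_0}(\theta_{x_0})}{\tilde a'(\X_{x_0}(\theta_{x_0}))}
= -\frac{\hat a_{x_0}(\theta_{x_0})}{\tilde a'(\X_{x_0}(\theta_{x_0}))},
\end{align*}
the denominator being nonzero by Assumption~(\textbf{F}5); Lemma~\ref{Theta} and Theorem~\ref{Approx-FilaDiff} are the technical inputs making this uniform and identifying the numerator's dominant stochastic term as a linear functional of $d^2\hat f - E\, d^2\hat f$ at the point $\X_{x_0}(\theta_{x_0})$, with coefficient vector proportional to $A(x)=\tilde G(x)^T\nabla f(x)$. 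A standard kernel-CLT computation then shows that $\sqrt{nh^6}\,\hat a_{x_0}(\theta_{x_0})$ is asymptotically a Gaussian field in $x_0$ with variance proportional to $f(\X_{x_0}(\theta_{x_0}))\,\|A(\X_{x_0}(\theta_{x_0}))\|_{\mathbf R}^2$ (this is essentially the content of Theorem~\ref{Gaussian}). Dividing by $\tilde a'$ and inserting the factor $|g|\cdot\|V\|$ from the previous paragraph, the variance of the normalized field $|g(\X_{x_0}(\theta_{x_0}))|\,\|V(\X_{x_0}(\theta_{x_0}))\|\,\sqrt{nh^6}\,(\hat\theta_{x_0}-\theta_{x_0})$ collapses to $1$ by the very definition (\ref{def-g}) of $g$ — that is the point of that definition.

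The final step is to recognize that, after this reduction, the left-hand side of (\ref{ConfBand1}) is (asymptotically) $P(\sup_{x_0\in\mathcal{G}} |Z_n(x_0)| < b_h(z))$ for a sequence $Z_n$ of mean-zero, unit-variance Gaussian fields whose index set $\mathcal{G}$ can be reparametrized by the filament $\mathcal{L}$ (collapsing each integral curve to a single point, since the value of the normalized field is, to leading order, constant along each integral curve — this is remark (b) in the introduction), and whose covariance, after rescaling by $h$, converges to that of a locally stationary field on a growing one-dimensional manifold $M_h$ of length $\asymp h^{-1}$. The extreme-value theorem for such fields (Theorem~\ref{ProbMain} of Qiao and Polonik 2015a) then yields the Gumbel-type limit $e^{-2e^{-z}}$, with the normalizing sequence $b_h(z)=\sqrt{2\log h^{-1}}+(z+c)/\sqrt{2\log h^{-1}}$ and the constant $c$ determined by the length of $\mathcal{L}$ and the local covariance structure (hence depending only on $f,K,\mathcal{L}$); Assumptions~(\textbf{K}3)--(\textbf{K}4) enter precisely here, to guarantee the non-degeneracy and the correct local behavior of the limiting Gaussian field so that Theorem~\ref{ProbMain} applies.

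I expect the main obstacle to be the uniform (in $x_0\in\mathcal{G}$) control of the two remainder terms — the curve-estimation error $\hat\X_{x_0}(\theta_{x_0})-\X_{x_0}(\theta_{x_0})$ and the Taylor remainders in both linearizations — at the precise $o_p(1/\sqrt{\log h^{-1}})$ level after $\sqrt{nh^6}$-normalization, since merely $o_p(1)$ would not suffice for an extreme-value statement where the threshold grows like $\sqrt{\log h^{-1}}$. This requires the delicate rate results (Theorems~\ref{UniformPath}, \ref{UniformXDiff}, \ref{Approx-FilaDiff} and Lemma~\ref{Theta}) with logarithmic factors tracked carefully, together with the fact that the class of starting points, though a two-dimensional region, contributes only a one-dimensional "effective" index set; reconciling the two-dimensional supremum over $\mathcal{G}$ with the one-dimensional manifold $M_h$ in Theorem~\ref{ProbMain} — i.e. showing the extra transversal direction does not inflate the extreme value — is the conceptually subtle part of the argument.
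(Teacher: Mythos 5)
Your proposal follows essentially the same route as the paper's proof: approximate $\hat\X_{x_0}(\hat\theta_{x_0})-\X_{x_0}(\theta_{x_0})$ by the scalar $-\hat\varphi_{1n}(\X_{x_0}(\theta_{x_0}))$ times $V(\X_{x_0}(\theta_{x_0}))$ via Theorem~\ref{UniformXDiff}, exploit that this approximation depends on $x_0$ only through the filament point (collapsing the two-dimensional index set $\mathcal{G}$ to the one-dimensional $\mathcal{L}$), normalize so the variance is one (which is precisely why $g$ is defined as in (\ref{def-g})), and invoke the companion extreme-value Theorem~\ref{ProbMain} on the rescaled filament $\mathcal{L}_h$. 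You also correctly pinpoint why (\textbf{H}1) is calibrated so the remainders are $o_p(1/\sqrt{\log h^{-1}})$ after $\sqrt{nh^6}$-normalization, which is the right threshold for an extreme-value statement.

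One place where your outline is materially incomplete: the step you describe as ``a standard kernel-CLT computation then shows that $\sqrt{nh^6}\,\hat a_{x_0}(\theta_{x_0})$ is asymptotically a Gaussian field'' is not merely a CLT. A pointwise (or even finite-dimensional) CLT is not enough to transfer the extreme-value limit, because Theorem~\ref{ProbMain} is a statement about genuine Gaussian fields $U_h$ on $\mathcal{L}_h$, and one must show that $\sup_{x\in\mathcal{L}}|Y_n(x)|$ and $\sup_{x\in\mathcal{L}_h}|U_h(x)|$ have the same limiting behavior at the level of the $b_h(z)$ thresholds. This requires a strong (almost-sure/coupling) Gaussian approximation in the spirit of Bickel--Rosenblatt (1973) and Rosenblatt (1976): passing from the centered kernel process to a Brownian-bridge-driven field, then to a Wiener-driven field, with each replacement controlled uniformly at a rate $o_p(1/\sqrt{\log h^{-1}})$. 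The paper makes these replacements explicit (via the processes $\lefts{0}{Y}_n,\dots,\lefts{3}{Y}_n$ in the supplement) and this is genuinely where the Rosenblatt transformation and the full force of (\textbf{H}1) are needed. Relatedly, you gesture at (\textbf{K}3)--(\textbf{K}4) and (\textbf{F}7) ``to guarantee non-degeneracy,'' but the concrete work is to verify local equi-$D_t$-stationarity, the uniform eigenvalue lower bound (\ref{Lambda2C}), and the covariance decay conditions (\ref{SupGauss1})--(\ref{SupGauss2}) for $U_h$; these are where those assumptions actually bite and where the constant $c$ (the integral over $\mathcal{L}$ of $\|D_s^0M_s^1\|$) is computed.
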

Notice that in particular the assumptions of this theorem assure that there is no flat part on the filament. The dependence of $c$ on $K,f$ and ${\mathcal{L}}$ is made explicit in the proof of Theorem~\ref{ConfBand} given in section~\ref{proofs}.
As already indicated, to prove Theorem \ref{ConfBand} we approximate the supremum distance between $\hat \X_{x_0}(\hat \theta_{x_0})$ and $\X_{x_0}(\theta_{x_0})$ by a supremum of a Gaussian random field over the rescaled filaments ${\mathcal{L}}_h = \{ x: xh \in {\mathcal{L}}\}$ as $h \to 0.$ The proof combines results for estimating the trajectory of the integral curve and the estimation of the parameter value at the filament points when traveling along the integral curve. These results, which are of independent interest, will be discussed in the following subsections. \\[8pt]
%
%
It will turn out that the estimation of  the integral curves can be accomplished at a faster rate than the estimation of the location of the mode along the integral curve, and so it is the latter that is determining the rate in Theorem~\ref{ConfBand}. It perhaps is not a surprise that the estimation of the trajectory itself turns out to be negligible, as the property of being a maximizer/maximum is of local nature. In other words, in our approach the estimation of the integral curves only serves as a means to an end. We will further see in the next section that for each fixed $x_0$ the deviation $\hat\theta_{x_0} - \theta_{x_0}$ can be approximated by a linear function of the deviations of the second derivatives of the kernel estimator (see Theorem~\ref{Theta}). Since under our assumptions we can estimate second derivatives of $f$ by the standard rate $\sqrt{nh^{d+4}} = \sqrt{nh^6},$ this then explains the normalizing factor in Theorem~\ref{ConfBand}. In fact, Genovese et al. (2014) derived the rate $O_p\big(\big(\frac{\log n}{nh^6}\big)^{1/2}\,\big) + O(h^2)$ for the Hausdorff distance between a filament and its kernel estimate, where $O(h^2)$ accounts for the rate of the bias term, which can be absorbed into $O_p\big(\big(\frac{\log n}{nh^6}\big)^{1/2}\,\big)$ under out assumption ({\bf H}1). Notice that $\sup_{x_0\in\mathcal{G}}\big\|\hat \X_{x_0}(\hat \theta_{x_0})-\X_{x_0}(\theta_{x_0})\big\|$ gives an upper bound on the Hausdorff distance between the sets ${\cal L}$ and $\widehat{\cal L}$. \\[8pt]
The proof of Theorem~\ref{ConfBand} requires the derivation of several results that are interesting in their own right. These results provide further insight about the behavior of our filament estimation approach and they also provide more details about the deviation $\hat \X_{x_0}(\hat \theta_{x_0})-\X_{x_0}(\theta_{x_0})$. In fact, if we decompose this deviation into the projection orthogonal to the filament, i.e. the projection onto $V(\X_{x_0}(\theta_{x_0}))$, and the projection onto $V^{\bot}\big( \X_{x_0}( \theta_{x_0})\big)$, then we will see that under the assumptions of the above theorem the estimation of the latter is asymptotically negligible. The key assumption here is that the filament has no flat part. Without this assumption the two projections (and thus the deviation of the filament itself) both are of the order $O_p(1/\sqrt{nh^5})$ (cf. Corollary~\ref{distr-filament-est2}).

\subsection{Estimation of integral curves}

This section discusses the estimation of the integral curve $\X_{x_0}(t)$. We will adapt the method from Koltchinskii et al. (2007) to our case. Koltchinskii et al. assume the availability of iid observations $(W_i,X_i)$ following the regression model $W_i = V(X_i) + \epsilon_i$ with $X_i$ and $\epsilon_i$ independent. In contrast to that, our model assumes the underlying vector field to be given by the eigenvector of the Hessian of a density $f$, and we have available iid $X_i$'s from $f$. Our first result considers the estimation of a single trajectory (i.e. we fix the starting point).

\begin{theorem}
\label{Gaussian}
Under assumptions (\textbf{F}1)--(\textbf{F}2), (\textbf{K}1)--(\textbf{K}2) and (\textbf{H}1), for any $x_0\in \mathcal{G}$, $0 < \gamma < \infty$ and $0 \leq T_{min},T_{max}< \infty,\,T_{min} + T_{max} \ne 0$ with $\{\X_{x_0}(t),\,t \in [-T_{min},T_{max}]\} \subset {\cal H}$ and
\begin{align}\label{Gammax0}
\inf_{-T_{min}\leq s< u\leq T_{max}}\bigg\|\frac{1}{u-s}\int_s^uV(\X_{x_0}(\lambda))d\lambda\bigg\|\geq\gamma,
\end{align}
the sequence of stochastic process $\sqrt{nh^5}(\hat \X_{x_0}(t)-\X_{x_0}(t))$, $-T_{min}\leq t\leq T_{max}$, converges weakly in the space $C[-T_{min},T_{max}]:=C([-T_{min},T_{max}],\mathbb{R}^2)$ of $\mathbb{R}^2$-valued continuous functions on $[-T_{min},T_{max}]$ to the Gaussian process $\omega(t), -T_{min}\leq t\leq T_{max}$, satisfying the SDE
\begin{align}\label{SDEMain}
d\omega(t)=&\frac{\sqrt{\beta}}{2}\tilde G(\X_{x_0}(t))v(\X_{x_0}(t))dt+\nabla V(\X_{x_0}(t))\omega(t)dt\nonumber\\
&\hspace{-1cm}+\bigg\{\tilde G(\X_{x_0}(t))\bigg[\int\int{\mathbb K}(\X_{x_0}(t),\tau,z)f(\X_{x_0}(t))dzd\tau\bigg]\tilde G(\X_{x_0}(t))^T \bigg\}^{1/2}dW(t)
\end{align}
with initial condition $\omega(0)=0$, where $W(t),t\geq0$ is a two-sided standard Brownian motion in $\mathbb{R}^2$,
\begin{align}
v(x) &=
\left(
\begin{array}{c}
\int K(z)z^T \nabla^2f^{(2,0)}(x)zdz \\
\int K(z)z^T \nabla^2f^{(1,1)}(x)zdz \\
\int K(z)z^T \nabla^2f^{(0,2)}(x)zdz \\
\end{array}
\right) \in \bbR^3, \label{BDefinition}\\[-15pt]
\intertext{and}
{\mathbb K}(x,\tau,z) &:=d^2 K(z)\big[d^2 K\big(\tau V(x)+z\big)\big]^T \in \bbR^{3 \times 3}.\label{PsiDefinition}
\end{align}
\end{theorem}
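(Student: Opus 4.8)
\noindent The plan is to adapt the strategy of Koltchinskii et al.\ (2007). One first rewrites the difference of the two defining ODEs as a linear Volterra integral equation for $\Delta(t):=\hat\X_{x_0}(t)-\X_{x_0}(t)$, isolates its leading stochastic term and its (deterministic) bias term, proves a functional central limit theorem for the former, and finally passes the weak limit through the continuous linear solution operator of that integral equation. The feature special to our setting is that the error in the driving field is $\hat V-V=G(d^2\hat f)-G(d^2 f)$, so a delta-method expansion reduces the whole problem to the behaviour along the curve of the second-order partial derivatives $d^2\hat f-d^2 f$ of the kernel estimator, whose non-standard covariance structure --- correlated only on the scale $h$ --- is exactly where the function ${\mathbb K}$ of (\ref{PsiDefinition}) enters.

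First I would establish consistency with crude rates. Under (H1), standard kernel theory gives $\sup_{x\in\mathcal H}\|d^2\hat f(x)-d^2 f(x)\|=O_p\big((\log n/(nh^6))^{1/2}\big)$, and since $G$ is smooth on the compact set $\{d^2 f(x):x\in\mathcal H\}$, which by (F2) lies in the region $\mathcal Q_\delta$ of distinct eigenvalues, also $\sup_{x\in\mathcal H}\|\hat V(x)-V(x)\|=O_p\big((\log n/(nh^6))^{1/2}\big)$. Decomposing $\dot\Delta(t)=\big(\hat V(\hat\X_{x_0}(t))-V(\hat\X_{x_0}(t))\big)+\big(V(\hat\X_{x_0}(t))-V(\X_{x_0}(t))\big)$, using that $V$ is Lipschitz, and invoking Gronwall's inequality together with condition (\ref{Gammax0}) (which, as in Koltchinskii et al., keeps the estimated trajectory in a fixed neighbourhood of $\{\X_{x_0}(t):t\in[-T_{min},T_{max}]\}$ inside $\mathcal H$), one obtains $\sup_t\|\Delta(t)\|=O_p\big((\log n/(nh^6))^{1/2}\big)$. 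Then, expanding $G$ to first order around $d^2 f$, expanding $V$ to first order around $\X_{x_0}(s)$, and replacing $\hat\X_{x_0}(s)$ by $\X_{x_0}(s)$ in the arguments, one arrives at
\[
\Delta(t)=\int_0^t\tilde G(\X_{x_0}(s))\,\big(d^2\hat f-d^2 f\big)(\X_{x_0}(s))\,ds+\int_0^t\nabla V(\X_{x_0}(s))\,\Delta(s)\,ds+R_n(t),
\]
with $\tilde G$ from (\ref{GtildeDef}). The key analytic task is the bound $\sup_t\|R_n(t)\|=o_p\big((nh^5)^{-1/2}\big)$: the quadratic-in-$\Delta$ and quadratic-in-$(d^2\hat f-d^2 f)$ remainders are controlled by the crude rate above and by (H1), but the delicate piece is the one produced by the substitution $\hat\X_{x_0}(s)\mapsto\X_{x_0}(s)$ inside $\tilde G(\cdot)(d^2\hat f-d^2 f)(\cdot)$, whose $s$-derivative involves the third-order fluctuations $d^3\hat f-d^3 f$ (of pointwise size $(nh^8)^{-1/2}$) multiplied by $\Delta(s)$; here the naive supremum estimate is not sharp enough, and one must exploit that integration in $s$ against the smooth, $O(h)$-correlated factor $\Delta(s)$ buys an extra factor $h^{1/2}$ --- which, in the same stroke, upgrades the crude bound to $\sup_t\|\Delta(t)\|=O_p\big((nh^5)^{-1/2}\big)$. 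I expect this remainder control to be the main obstacle of the proof.

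Next I would identify the drift and the diffusion. Write $d^2\hat f-d^2 f=(\,E\,d^2\hat f-d^2 f\,)+(\,d^2\hat f-E\,d^2\hat f\,)=:B_n+W_n$. A Taylor expansion of the bias using the symmetry of $K$ gives $B_n(x)=\tfrac{h^2}{2}v(x)+o(h^2)$ uniformly in $x\in\mathcal H$, with $v$ as in (\ref{BDefinition}), so that, since $\sqrt{nh^5}\,h^2=\sqrt{nh^9}\to\sqrt\beta$, we obtain $\sqrt{nh^5}\,B_n(\X_{x_0}(s))\to\tfrac{\sqrt\beta}{2}v(\X_{x_0}(s))$ uniformly in $s$, which gives precisely the drift term $\tfrac{\sqrt\beta}{2}\tilde G(\X_{x_0}(s))v(\X_{x_0}(s))$ in (\ref{SDEMain}). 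For the centered part, set
\[
M_n(t):=\sqrt{nh^5}\int_0^t\tilde G(\X_{x_0}(s))W_n(\X_{x_0}(s))\,ds=\frac{1}{\sqrt n\,h^{3/2}}\sum_{k=1}^n\int_0^t\tilde G(\X_{x_0}(s))\Big[d^2K\big(\tfrac{\X_{x_0}(s)-X_k}{h}\big)-E\,(\cdot)\Big]ds ,
\]
a sum of i.i.d.\ centered processes whose $k$-th summand is nonzero only when $X_k$ lies in an $O(h)$-tube around the curve (an event of probability $O(h)$), is uniformly $O((nh)^{-1/2})$, and hence satisfies the Lindeberg condition because $nh\to\infty$. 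The key covariance computation is, writing $r'=r+h\tau$ and changing variables by $y=\X_{x_0}(r)-hz$,
\[
\Cov\!\Big(d^2K\big(\tfrac{\X_{x_0}(r)-X_1}{h}\big),\,d^2K\big(\tfrac{\X_{x_0}(r')-X_1}{h}\big)\Big)=h^2 f(\X_{x_0}(r))\!\int{\mathbb K}(\X_{x_0}(r),\tau,z)\,dz\,\big(1+o(1)\big),
\]
with the left side negligible for $|r-r'|\gg h$ (condition (\ref{Gammax0}) yields $\|\X_{x_0}(r')-\X_{x_0}(r)\|\geq\gamma|r'-r|$, so only an $O(h)$-window of $r'$ contributes). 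Integrating and substituting $r'=r+h\tau$ gives $\Cov(M_n(s),M_n(u))\to\int_0^{s\wedge u}\tilde G(\X_{x_0}(r))\big[\int\!\int{\mathbb K}(\X_{x_0}(r),\tau,z)f(\X_{x_0}(r))\,dz\,d\tau\big]\tilde G(\X_{x_0}(r))^T\,dr$ (finite by (K2)); combined with standard tightness estimates for the increments of $M_n$, this shows that $M_n$ converges weakly in $C[-T_{min},T_{max}]$ to a continuous centered Gaussian martingale that has the same law as the process $t\mapsto\int_0^t\sigma(\X_{x_0}(s))\,dW(s)$, where $\sigma\sigma^T$ equals the diffusion coefficient in (\ref{SDEMain}) and $W$ is a two-sided standard Brownian motion in $\mathbb R^2$.

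Finally I would assemble the pieces. By the previous steps, $Z(t):=\sqrt{nh^5}\,\Delta(t)$ satisfies $Z(t)=N_n(t)+\int_0^t\nabla V(\X_{x_0}(s))Z(s)\,ds+o_p(1)$ uniformly on $[-T_{min},T_{max}]$, where $N_n$ converges weakly in $C[-T_{min},T_{max}]$ to the process $t\mapsto\int_0^t\tfrac{\sqrt\beta}{2}\tilde G(\X_{x_0}(s))v(\X_{x_0}(s))\,ds+M(t)$. The operator sending $N$ to the unique solution of the linear Volterra equation $Z(t)=N(t)+\int_0^t\nabla V(\X_{x_0}(s))Z(s)\,ds$ is bounded and linear, hence continuous, on $C[-T_{min},T_{max}]$ (explicitly, it is given by variation of constants through the fundamental matrix of $\dot\Phi=\nabla V(\X_{x_0}(t))\Phi$, $\Phi(0)=I$), and, being bounded, it maps the $o_p(1)$ remainder to a term that is still $o_p(1)$; the continuous mapping theorem therefore yields $Z\Rightarrow\omega$, where $\omega$ is the solution of the linear additive-noise SDE (\ref{SDEMain}) with $\omega(0)=0$. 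The range $t\in[-T_{min},0)$ is treated identically using the left branch of the two-sided Brownian motion. Among all these steps, the uniform control of the linearization remainder $R_n$ in the presence of the third-order fluctuations of $\hat f$ is the point that requires the most care.
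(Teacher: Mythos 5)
Your overall strategy---linearize to a linear Volterra integral equation, prove a functional CLT for the driving term, and pass the limit through the continuous solution operator given by the fundamental matrix of $\dot\Phi=\nabla V(\X_{x_0}(t))\Phi$---is the same as the paper's (both follow Koltchinskii et al.\ 2007). Your covariance computation yielding $h^2 f(\X_{x_0}(r))\int\mathbb K(\X_{x_0}(r),\tau,z)\,dz$ and its integration to the diffusion coefficient in (\ref{SDEMain}), as well as the drift identification via $\sqrt{nh^9}\to\beta$, are correct, and the final continuous-mapping argument is the paper's.

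Where you genuinely diverge, and where your sketch has a real gap, is in the choice of decomposition for the remainder. You split $\hat V(\hat\X)-V(\X)=[\hat V(\hat\X)-V(\hat\X)]+[V(\hat\X)-V(\X)]$, which evaluates the stochastic driving term $\hat V-V$ at the \emph{estimated} trajectory, forcing you to move the argument back to $\X_{x_0}(s)$ inside $\tilde G(\cdot)(d^2\hat f-d^2 f)(\cdot)$. You correctly observe that this substitution error carries a factor of size $(\log n/nh^8)^{1/2}$ (the third-order fluctuations) times $\Delta$, and that a naive supremum bound fails under (\textbf{H}1) because $nh^9\to\beta$ so $\log n/\sqrt{nh^9}\not\to 0$. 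But your proposed repair---exploiting the $O(h)$ correlation length in the $s$-integral---is circular as stated: the gain requires control over the increments of $\Delta$ at the improved rate, which is part of what is being proved, and you do not indicate how to break the loop. The paper sidesteps the problem entirely by a different decomposition: it defines the approximating process $\hat\Z$ via the linear ODE whose inhomogeneity is $(\hat V-V)(\X_{x_0}(\cdot))$ evaluated at the \emph{true} trajectory only, so no substitution inside the driving noise is ever performed; the mismatch $\hat\D=\hat\Y-\hat\Z$ is then controlled by the \emph{relative} bound $\sup_t\|\hat\R(t)\|=o_p(\sup_t\|\hat\Y(t)\|)$, which needs only $\sup_x\|\nabla\hat V(x)-\nabla V(x)\|=o_p(1)$, not a rate, and a Gronwall bootstrap upgrades this to the absolute estimate $\sup_t\|\hat\D(t)\|=o_p((nh^5)^{-1/2})$ for free. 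The linearization of $G$, which both you and the paper handle by the quadratic bound $O_p(\log n/(nh^6))$, poses no difficulty. Adopting the paper's decomposition would make the step you flag as ``the main obstacle'' disappear.
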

The proof of Theorem \ref{Gaussian} that can be found in the supplementary material (Qiao and Polonik, 2015b) is following ideas from Koltchinskii et al. (2007). We can see that $\sqrt{nh^5}$ is the appropriate normalizing factor under the assumption of the theorem. The heuristic behind that rate is given by the fact that the integral curve satisfies the integral equation\\[-10pt]
\begin{align*}
\X_{x_0}(t) = \int_0^t V(\X_{x_0}(s))\,ds + x_0.\\[-20pt]
\end{align*}
Our estimator $\hat{\X}_{x_0}(t)$ satisfies the similar equation with $V$ replaced by $\hat V$, and thus we have
\begin{align}\label{heuristic}
\hat{\X}_{x_0}(t) - \X_{x_0}(t) &= \int_0^t \big[\hat{V}(\hat \X_{x_0}(s)) - V(\X_{x_0}(s))\big]\,ds \nonumber\\
&
\approx  \int_0^t \big[\hat{V}(\hat \X_{x_0}(s)) - V(\hat \X_{x_0}(s))\big]\,ds.
\end{align}
Heuristically, the indicated approximation holds because the remainder term $\int_0^t [V(\hat\X_{x_0}(s)) - V(\X_{x_0}(s))]ds$ roughly behaves like the integrated difference  $\hat{\X}_{x_0}(t) - \X_{x_0}(t),$ which is of smaller order than $\hat{\X}_{x_0}(t) - \X_{x_0}(t)$ itself. Therefore we get from (\ref{heuristic}) that the rate of convergence of $\hat{\X}_{x_0}(t) - \X_{x_0}(t) $ is essentially determined by the integrated difference $\hat{V}(x) - V(x)$. Since $\hat{V}$ is a function of the second derivatives of the density estimator we obtain a standard rate of $1/\sqrt{nh^6}$ for the difference $\hat{V}(x) - V(x)$, and through integrating we gain one power of $h$, justifying the normalizing factor $\sqrt{nh^5}.$ The above theorem implies that as $n\rightarrow\infty$,
\begin{align*}
\sup_{t\in[- T_{min},T_{max}]}\|\hat\X_{x_0}(t)-\X_{x_0}(t)\|=O_p\bigg(\frac{1}{\sqrt{nh^5}}\bigg).
\end{align*}
In the next theorem we consider the behavior of $\hat\X_{x_0}(t)-\X_{x_0}(t)$ not only uniformly in $t$ but also uniformly in the starting point $x_0$.
\begin{theorem}\label{UniformPath}
Suppose for any $x_0\in \mathcal{G}$ there exist $T_{x_0}^{min}, T_{x_0}^{max} \geq 0$ with $T_{x_0}^{min} + T_{x_0}^{max} > 0$ such that $T_{x_0}^{min}$ and $T_{x_0}^{max}$ are continuous in $x_0 \in \mathcal{G},$ and $\{\X_{x_0}(t),\,t \in [-T_{x_0}^{min}, T_{x_0}^{max}]\} \subset {\cal H}$ for all $x_0 \in \mathcal{G}$. Further assume that for some $\gamma_{\mathcal{G}}>0$
\begin{align}\label{Gammax1}
\inf_{x_0\in \mathcal{G}} \; \inf_{-T_{x_0}^{min}\leq s< u\leq T_{x_0}^{max}}\bigg\|\frac{1}{u-s}\int_s^uV(\X_{x_0}(\lambda))d\lambda\bigg\|\geq\gamma_{\mathcal{G}}.
\end{align}
Then under assumptions (\textbf{F}1)--(\textbf{F}2), (\textbf{K}1)--(\textbf{K}2) and (\textbf{H}1),
\begin{align*}
\sup_{x_0\in \mathcal{G}} \; \sup_{t\in[-T_{x_0}^{min}, T_{x_0}^{max}]}\|\hat \X_{x_0}(t)-\X_{x_0}(t)\|=O_p\bigg(\sqrt{\frac{\log{n}}{nh^5}}\bigg).
\end{align*}
\end{theorem}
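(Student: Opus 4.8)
The plan is to upgrade the single-trajectory analysis behind Theorem~\ref{Gaussian} to a statement that is uniform in the starting point $x_0 \in \mathcal{G}$, paying the usual logarithmic price for the union over a compact index set. The starting point is the integral-equation representation used in the heuristic~(\ref{heuristic}): for each fixed $x_0$,
\begin{align*}
\hat\X_{x_0}(t)-\X_{x_0}(t) = \int_0^t\big[\hat V(\hat\X_{x_0}(s)) - V(\X_{x_0}(s))\big]\,ds.
\end{align*}
I would split the integrand as $\big[\hat V(\hat\X_{x_0}(s)) - V(\hat\X_{x_0}(s))\big] + \big[V(\hat\X_{x_0}(s)) - V(\X_{x_0}(s))\big]$. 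For the second (deterministic-given-the-estimator) piece, Lipschitz continuity of $V$ on $\mathcal{H}$ (which follows from (\textbf{F}1)) gives $\|V(\hat\X_{x_0}(s)) - V(\X_{x_0}(s))\| \le L\,\|\hat\X_{x_0}(s) - \X_{x_0}(s)\|$, so that piece feeds a Grönwall argument. For the first piece, since $\hat V = G(d^2\hat f)$ and $V = G(d^2 f)$ with $G$ smooth and $d^2 f$ bounded away from the degeneracy set $\mathcal{Q}_\delta$ by (\textbf{F}2), a Taylor expansion of $G$ reduces everything to controlling $d^2\hat f - d^2 f$ uniformly on a neighborhood of $\mathcal{H}$.

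The analytic core is therefore the uniform rate for the second derivatives of the kernel density estimator: a standard result (e.g.\ of Giné–Guillou / Einmahl–Mason type, under (\textbf{F}1), (\textbf{K}1)--(\textbf{K}2), (\textbf{H}1)) gives
\begin{align*}
\sup_{x \in \mathcal{H}^{\epsilon}}\big\|d^2\hat f(x) - d^2 f(x)\big\| = O_p\!\left(\sqrt{\frac{\log n}{nh^6}}\right),
\end{align*}
the bias being negligible under (\textbf{H}1) by point~9 of the assumption discussion. The gain of one power of $h$ comes from the integration $\int_0^t\!\cdot\,ds$ in the trajectory equation, exactly as in the heuristic following Theorem~\ref{Gaussian}; this is what turns $\sqrt{\log n / (nh^6)}$ into $\sqrt{\log n / (nh^5)}$. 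To make this rigorous I would first argue that, on an event of probability tending to one, $\hat\X_{x_0}(s)$ stays in a fixed compact enlargement $\mathcal{H}^{\epsilon}$ of $\mathcal{H}$ uniformly over $x_0\in\mathcal{G}$ and $s$ in the (compact, by continuity of $T_{x_0}^{min},T_{x_0}^{max}$ and of $(t,x_0)\mapsto\X_{x_0}(t)$) index range — this is where the existence and smoothness of the solutions $\hat\X_{x_0}$ and their closeness to $\X_{x_0}$ must be established, using assumption~(\ref{Gammax1}) to ensure the curves do not stall, just as in Koltchinskii et al.~(2007).

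Putting the pieces together: write $\Delta(t,x_0) = \|\hat\X_{x_0}(t)-\X_{x_0}(t)\|$ and $R_n = \sup_{x,\,x\in\mathcal{H}^{\epsilon}}\|\hat V(x)-V(x)\| = O_p(\sqrt{\log n/(nh^6)})$. On the good event we get $\Delta(t,x_0) \le |t|\,R_n + L\int_0^{|t|}\Delta(s,x_0)\,ds$, and Grönwall's inequality yields $\sup_{x_0,t}\Delta(t,x_0) \le C\,R_n = O_p(\sqrt{\log n/(nh^6)})$; a second pass, substituting this crude bound back into the remainder term $\int_0^t[V(\hat\X_{x_0}(s))-V(\X_{x_0}(s))]\,ds = O_p(\sqrt{\log n/(nh^6)})$ (which is genuinely of smaller order than the claimed rate $\sqrt{\log n/(nh^5)}$ since it lacks the $h$-gain and is already controlled), shows that this remainder is negligible and the rate is governed by $\int_0^t[\hat V - V](\hat\X_{x_0}(s))\,ds$, whose sup-norm is $O_p(\sqrt{\log n/(nh^5)})$ by the uniform second-derivative rate and the length of the integration interval being bounded. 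The main obstacle I anticipate is not any single estimate but the bookkeeping needed to make the uniformity in $x_0$ fully rigorous — in particular, controlling the estimated integral curves $\hat\X_{x_0}$ simultaneously over all $x_0\in\mathcal{G}$ (existence up to the required time, confinement to $\mathcal{H}^\epsilon$, and the Grönwall comparison with uniform constants), which requires that the good events for the density-derivative approximation be chosen once, globally, rather than per trajectory.
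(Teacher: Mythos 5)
The decomposition you set up (splitting $\hat V(\hat\X)-V(\X)$ into $[\hat V - V](\hat\X)$ plus a Lipschitz term, then Gr\"onwall) is sound and matches the first part of the paper's argument. The gap is in the final assertion that
$\sup_{x_0,t}\big\|\int_0^t[\hat V - V](\hat\X_{x_0}(s))\,ds\big\| = O_p(\sqrt{\log n/(nh^5)})$ ``by the uniform second-derivative rate and the length of the integration interval being bounded.'' That argument gives only $O_p(\sqrt{\log n/(nh^6)})$: the interval length is $O(1)$, and $\sup_x\|\hat V(x)-V(x)\|=O_p(\sqrt{\log n/(nh^6)})$, so no power of $h$ is gained at all. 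Your earlier Gr\"onwall display makes this explicit --- you write $\Delta(t,x_0)\le|t|R_n+L\int_0^{|t|}\Delta\,ds$ with $R_n=O_p(\sqrt{\log n/(nh^6)})$ and then conclude $\sup\Delta\le CR_n$, which is a factor $\sqrt{1/h}$ slower than the claimed rate, and the ``second pass'' does not repair this because it feeds the same estimate back in.

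The extra power of $h$ that the heuristic after Theorem~\ref{Gaussian} alludes to is \emph{not} a trivial consequence of integrating over a bounded interval. It comes from a variance-reduction effect that must be proved. In the paper's proof one first linearizes $\hat V - V$ as $\nabla G(d^2 f)\,d^2(\hat f-f)$ plus a quadratic remainder (the latter is $O_p(\log n/(nh^6))$, negligible), and then analyzes the linear, centered part
$\int_0^t \nabla G_j(d^2 f(\X_{x_0}(s)))\,[d^2\hat f - \mathbb{E}d^2\hat f](\X_{x_0}(s))\,ds$
as an empirical process indexed by $(x_0,t)$. The key computation is that the per-observation variance of the kernel function
$\omega_{j,\ell}(X;x_0,t)=\int_0^t \tilde G_{j,\ell}(\X_{x_0}(s))K_\ell\big(\frac{\X_{x_0}(s)-X}{h}\big)\,ds$
is $O(h^3)$, not $O(h^2)$: the double time integral $\int_0^t\int_0^t$ picks up only the near-diagonal $|s-s'|\lesssim h$ (using assumption~(\ref{Gammax1}) to ensure the trajectory moves at a uniformly positive speed), contributing an extra factor of $h$. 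This, combined with a VC-type covering-number bound for the class $\{\omega_{j,\ell}(\cdot;x_0,t)\}$ and a Talagrand/van~der~Vaart--Wellner-type sup bound, yields the rate $\sqrt{\sigma^2\log(1/\sigma^2)}$ with $\sigma^2 = O(1/(nh^5))$, hence $O_p(\sqrt{\log n/(nh^5)})$. Your proposal skips this entire mechanism, and in fact also replaces the true trajectory $\X_{x_0}(s)$ in the integrand by the random $\hat\X_{x_0}(s)$, which would additionally require the $\hat\Z/\hat\D$ decomposition of Koltchinskii et al.\ to reduce to a deterministic index. As written, the proposal establishes only the weaker rate $O_p(\sqrt{\log n/(nh^6)})$.
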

\subsection{Pointwise asymptotic distribution of filament estimates}

Our goal here is to find the pointwise asymptotic distribution of $\hat\X_{x_0}(\hat{\theta}_{x_0}) - \X_{x_0}(\theta_{x_0})$, the difference of the `true' and the estimated filament points corresponding to integral curves starting at $x_0$. To this end, we first approximate $\hat\X_{x_0}(\hat{\theta}_{x_0}) - \X_{x_0}(\theta_{x_0})$ by a linear function of the difference $\hat \theta_{x_0}-\theta_{x_0}$. Thus, finding good approximations for $\hat\X_{x_0}(\hat{\theta}_{x_0}) - \X_{x_0}(\theta_{x_0})$ can be accomplished by finding good approximations for $\hat \theta_{x_0}-\theta_{x_0}$. 
\begin{theorem}\label{Approx-FilaDiff}
If (\textbf{F}1)--(\textbf{F}6), (\textbf{K}1)--(\textbf{K}2) and (\textbf{H}1) hold, then
\begin{align*}
\sup_{x_0 \in \mathcal{G}}\big\| \big[\hat \X_{x_0}(\hat\theta_{x_0})-\X_{x_0}(\theta_{x_0})\big] -  V(\X_{x_0}(\theta_{x_0})) [\hat\theta_{x_0}-\theta_{x_0}] \big\| = O_p\Big(\textstyle{ \sqrt{\frac{\log{n}}{nh^5}}}\Big).
\end{align*}
\end{theorem}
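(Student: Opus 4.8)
The plan is to localize: rewrite $\hat\X_{x_0}(\hat\theta_{x_0})-\X_{x_0}(\theta_{x_0})$ as a curve increment by means of the ODE that $\hat\X_{x_0}$ satisfies, linearize that increment, and thereby reduce the assertion to (a) Theorem~\ref{UniformPath}, applied to the path error at the fixed parameter $\theta_{x_0}$, and (b) a short‑interval integral that I will argue is of strictly smaller order than $\sqrt{\log n/(nh^5)}$. Using $\hat\X_{x_0}(\hat\theta_{x_0})-\hat\X_{x_0}(\theta_{x_0})=\int_{\theta_{x_0}}^{\hat\theta_{x_0}}\hat V(\hat\X_{x_0}(s))\,ds$ and adding and subtracting $\hat\X_{x_0}(\theta_{x_0})$, I would start from the exact identity
\begin{align*}
& \hat\X_{x_0}(\hat\theta_{x_0}) - \X_{x_0}(\theta_{x_0}) - V(\X_{x_0}(\theta_{x_0}))\,(\hat\theta_{x_0}-\theta_{x_0}) \\
& \qquad = \big[\hat\X_{x_0}(\theta_{x_0}) - \X_{x_0}(\theta_{x_0})\big] \\
& \qquad\qquad + \int_{\theta_{x_0}}^{\hat\theta_{x_0}}\big[\hat V(\hat\X_{x_0}(s)) - V(\X_{x_0}(\theta_{x_0}))\big]\,ds ,
\end{align*}
calling the two terms on the right $(\mathrm I)_{x_0}$ and $(\mathrm{II})_{x_0}$. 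All estimates are taken on the high-probability event (guaranteed by the assumptions, cf.\ Proposition~\ref{ThetaRate}) on which $\hat\Theta_{x_0}\neq\emptyset$ for every $x_0\in\mathcal{G}$ and $\sup_{x_0\in\mathcal{G}}|\hat\theta_{x_0}-\theta_{x_0}|$ is small; together with Theorem~\ref{UniformPath} and the fact that the integral curves through $\mathcal{G}$ stay inside $\mathcal{H}$ (with an arbitrarily small time-enlargement beyond $[-a^*,a^*]$, which the standing assumptions permit, cf.\ the discussion of ({\bf F}4)), this keeps all the curves $\hat\X_{x_0}(s)$, for $s$ between $\theta_{x_0}$ and $\hat\theta_{x_0}$, within a fixed compact subset of $\mathcal{H}$ on which the uniform kernel-derivative rates apply.

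For $(\mathrm I)_{x_0}$: since $\theta_{x_0}\in[-a^*,a^*]$ and, by $x_0\rightsquigarrow\mathcal{L}$ together with ({\bf F}4)/(\ref{Gammax1}), the hypotheses of Theorem~\ref{UniformPath} hold along each integral curve on the interval between $0$ and $\theta_{x_0}$, Theorem~\ref{UniformPath} gives directly
$$\sup_{x_0\in\mathcal{G}}\big\|(\mathrm I)_{x_0}\big\| \le \sup_{x_0\in\mathcal{G}}\sup_{|t|\le a^*}\big\|\hat\X_{x_0}(t)-\X_{x_0}(t)\big\| = O_p\!\Big(\sqrt{\tfrac{\log n}{nh^5}}\Big),$$
which is precisely the rate asserted in the theorem. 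It therefore remains to show that $\sup_{x_0\in\mathcal{G}}\|(\mathrm{II})_{x_0}\|$ is $o_p$ of this.

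For $(\mathrm{II})_{x_0}$: I would split the integrand into three differences,
\begin{align*}
\hat V(\hat\X_{x_0}(s)) - V(\X_{x_0}(\theta_{x_0}))
&= \big[\hat V(\hat\X_{x_0}(s)) - V(\hat\X_{x_0}(s))\big] \\
&\quad + \big[V(\hat\X_{x_0}(s)) - V(\X_{x_0}(s))\big] \\
&\quad + \big[V(\X_{x_0}(s)) - V(\X_{x_0}(\theta_{x_0}))\big].
\end{align*}
The first bracket is bounded by $\sup_{x\in\mathcal{H}}\|\hat V(x)-V(x)\|=O_p(\sqrt{\log n/(nh^6)})$: indeed $\hat V=G(d^2\hat f)$, $V=G(d^2 f)$ with $G$ smooth on the relevant region (the eigenvalues stay distinct by ({\bf F}2)), and $\sup_{\mathcal{H}}\|d^2\hat f-d^2 f\|=O_p(\sqrt{\log n/(nh^6)})$, the $O(h^2)$ bias being negligible under ({\bf H}1). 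The second bracket is $O_p(\sqrt{\log n/(nh^5)})$ uniformly, by Lipschitz continuity of $V$ (from ({\bf F}1)) and Theorem~\ref{UniformPath}. The third bracket is $\le C\,|s-\theta_{x_0}|\le C\,|\hat\theta_{x_0}-\theta_{x_0}|$, with $C$ a Lipschitz constant for $s\mapsto V(\X_{x_0}(s))$ uniform in $x_0$ (finite since $V$ is Lipschitz and $\|V\|$ is bounded). Integrating over an interval of length $|\hat\theta_{x_0}-\theta_{x_0}|$ and taking the supremum over $x_0$ gives
$$\sup_{x_0\in\mathcal{G}}\big\|(\mathrm{II})_{x_0}\big\| \le \Big(\sup_{x_0\in\mathcal{G}}|\hat\theta_{x_0}-\theta_{x_0}|\Big)\Big(O_p\big(\sqrt{\tfrac{\log n}{nh^6}}\big)+O\big(\sup_{x_0\in\mathcal{G}}|\hat\theta_{x_0}-\theta_{x_0}|\big)\Big).$$
Inserting the preliminary uniform rate $\sup_{x_0\in\mathcal{G}}|\hat\theta_{x_0}-\theta_{x_0}|=O_p(\sqrt{\log n/(nh^6)})$ — which Proposition~\ref{ThetaRate} supplies, obtained from a first-order expansion of the defining equation $\langle\nabla\hat f(\hat\X_{x_0}(\hat\theta_{x_0})),\hat V(\hat\X_{x_0}(\hat\theta_{x_0}))\rangle=0$ about $t=\theta_{x_0}$, using ({\bf F}5)--({\bf F}6) to bound $\tilde{a}'(\X_{x_0}(\theta_{x_0}))$ away from zero uniformly on $\mathcal{G}$, plus Theorem~\ref{UniformPath} and the uniform rates for $\nabla\hat f-\nabla f$ and $d^2\hat f-d^2 f$ — yields $\sup_{x_0\in\mathcal{G}}\|(\mathrm{II})_{x_0}\|=O_p(\log n/(nh^6))$.

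Finally, $\log n/(nh^6)=o\big(\sqrt{\log n/(nh^5)}\big)$, since the ratio equals $\sqrt{\log n/(nh^7)}\to 0$: indeed ({\bf H}1) gives $nh^8/(\log n)^3\to\infty$ while $h\downarrow 0$, hence $nh^7/\log n=\big(nh^8/(\log n)^3\big)\cdot\big((\log n)^2/h\big)\to\infty$. Combining this with the bound on $(\mathrm I)_{x_0}$ gives the theorem. The step I expect to be the main obstacle is obtaining, and then safely using, the preliminary uniform rate for $\hat\theta_{x_0}-\theta_{x_0}$: with mere consistency, the bound on $(\mathrm{II})_{x_0}$ would only be $o_p(1)\cdot O_p(\sqrt{\log n/(nh^6)})$, which cannot be absorbed. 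Pinning down the implicitly-defined root $\hat\theta_{x_0}$ uniformly over $x_0\in\mathcal{G}$ — controlling it along a \emph{random} curve, excluding $\hat\Theta_{x_0}=\emptyset$ and non-uniqueness of the argmin uniformly, and ruling out spurious roots of the estimated directional-derivative function near $\theta_{x_0}$ other than the one tracking $\theta_{x_0}$ — is the real work; a secondary but indispensable point is the uniform control that keeps all the random curves, hence the arguments of $\hat V$, inside a fixed compact subset of $\mathcal{H}$, on which every application of the uniform kernel-estimator rates depends.
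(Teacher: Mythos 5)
Your proposal is correct and follows essentially the same route as the paper's proof: the paper also writes $\hat\X_{x_0}(\hat\theta_{x_0})-\hat\X_{x_0}(\theta_{x_0})$ as $\hat V(\hat\X_{x_0}(\tilde\theta_{x_0}))[\hat\theta_{x_0}-\theta_{x_0}]$ by the mean value theorem (your integral form is equivalent), then decomposes $\hat V(\hat\X_{x_0}(\tilde\theta_{x_0}))-V(\X_{x_0}(\theta_{x_0}))$ into the same three pieces you use and applies Theorem~\ref{UniformPath}, Proposition~\ref{ThetaRate}, and the uniform kernel-derivative rates to obtain $O_p(\log n/(nh^6))+O_p(\sqrt{\log n/(nh^5)})$, the first being negligible under ({\bf H}1). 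Your concluding discussion of what is really at stake — the uniform rate for $\hat\theta_{x_0}-\theta_{x_0}$ and keeping the random curves inside a compact set where the uniform kernel rates apply — is exactly the work done by Propositions A.1 and 5.1 and the restriction to the high-probability event in the paper.
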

Now we will utilize this approximation by deriving good approximations for  $\hat\theta_{x_0}-\theta_{x_0}$, which then lead to good approximations for $\hat\X_{x_0}(\hat{\theta}_{x_0}) - \X_{x_0}(\theta_{x_0}).$ Define
\begin{align}\label{def-a1n}
\hat \varphi_{1n}(x) &= \tfrac{1}{\tilde a^\prime(x)} \;\blangle \nabla f(x), \;d^2\hat f(x)-\mathbb{E}d^2\hat f(x)\brangle_{\tilde{G}(x)}\\
\hat \varphi_{2n}(x) &= \tfrac{1}{\tilde a^\prime(x))}\,\Big[\blangle V(x), \hat \X_{x_0}(\theta_{x_0})-x\brangle_{\nabla^2 f(x)} + \blangle({\mathbb E}\nabla \hat f - \nabla f)(\hat\X_{x_0}(\theta_{x_0})),  V(x)\brangle\Big] , \label{def-a2n}
\end{align}
where $\tilde a^\prime(x)$ and $\tilde{G}(x)$ are as in (\ref{def-atildeprime}) and (\ref{GtildeDef}), respectively. Notice that for each fixed $x$, the term $\hat \varphi_{1n}(x)$ is a linear function of the second derivatives of $\hat f$. The following result shows that $- \hat \varphi_{1n}(\X_{x_0}(\theta_{x_0}))$ serves as a good approximation of  $\hat \theta_{x_0}-\theta_{x_0}$. If $\|\nabla f(\X_{x_0}(\theta_{x_0}))\| = 0$ then $\hat \varphi_{1n}(\X_{x_0}(\theta_{x_0})) = 0$, and a better approximation is provided by $\hat \varphi_{2n}(\X_{x_0}(\theta_{x_0}))$, and we also have control over the exact asymptotic behavior of this approximation, mainly due to Theorem~\ref{Gaussian}.
\begin{lemma}\label{Theta}
Under assumptions (\textbf{F}1)--(\textbf{F}6), (\textbf{K}1)--(\textbf{K}2) and (\textbf{H}1), we have
\begin{align}
\sup_{x_0\in\mathcal{G}}\big|\big(\hat \theta_{x_0}-\theta_{x_0}\big)+ \hat \varphi_{1n}(\X_{x_0}(\theta_{x_0}))\big| =O_p\bigg(\frac{\log{n}}{nh^7}\bigg).\label{ThetaNormal}
\end{align}
If in addition, $\sup_{x_0\in\mathcal{G}}\|\nabla f(\X_{x_0}(\theta_{x_0}))\|=0$, then 
\begin{align}\label{ThetaPart}
\sup_{x_0\in\mathcal{G}}\big|\big( \hat \theta_{x_0}-\theta_{x_0}\big)+ \hat \varphi_{2n}(\X_{x_0}(\theta_{x_0}))  \big|=O_p\bigg(\frac{\log n}{n h^{\frac{13}{2}}\,}\,\bigg).
\end{align}
%
\end{lemma}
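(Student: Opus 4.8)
The plan is to prove the two approximations in Lemma~\ref{Theta} by a first-order (Taylor) expansion of the defining equations for $\hat\theta_{x_0}$ and $\theta_{x_0}$, carefully tracking which error terms are uniform in $x_0\in\mathcal{G}$ and of the stated orders. Write $p_{x_0} := \X_{x_0}(\theta_{x_0})$ for the filament point. By definition $\theta_{x_0}$ solves $a_{x_0}(t) := \blangle\nabla f(\X_{x_0}(t)), V(\X_{x_0}(t))\brangle = 0$, and $\hat\theta_{x_0}$ solves (when $\wh\Theta_{x_0}\neq\emptyset$, which happens with probability tending to one by Proposition~\ref{ThetaRate}) the analogous equation $\hat a_{x_0}(t) := \blangle\nabla\hat f(\hat\X_{x_0}(t)), \hat V(\hat\X_{x_0}(t))\brangle = 0$. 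The first step is to expand $\hat a_{x_0}(\hat\theta_{x_0}) = 0$ around $t=\theta_{x_0}$: since $a_{x_0}(\theta_{x_0})=0$ and $\tilde a'(p_{x_0}) = a_{x_0}'(\theta_{x_0}) \neq 0$ (this is exactly where assumption (\textbf{F}5) via (\textbf{F}7) enters, guaranteeing $|\tilde a'(p_{x_0})|$ is bounded away from zero on $\mathcal{G}$, which is compact), one obtains
\begin{align*}
\hat\theta_{x_0} - \theta_{x_0} = -\frac{\big[\hat a_{x_0}(\theta_{x_0}) - a_{x_0}(\theta_{x_0})\big]}{\tilde a'(p_{x_0})} + (\text{quadratic remainder}),
\end{align*}
where the quadratic remainder is controlled by $|\hat\theta_{x_0}-\theta_{x_0}|^2$ times bounds on second derivatives of $\hat a_{x_0}$ (uniformly bounded with high probability by standard kernel-estimator derivative rates). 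The rough rate $|\hat\theta_{x_0}-\theta_{x_0}| = O_p(\sqrt{\log n/(nh^6)})$ needed to bootstrap this should already follow from Proposition~\ref{ThetaRate} (or can be derived alongside), so the quadratic remainder is $O_p(\log n/(nh^6))$, which is absorbed into $O_p(\log n/(nh^7))$.

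The core of the work is then to decompose the numerator $\hat a_{x_0}(\theta_{x_0}) - a_{x_0}(\theta_{x_0})$ — i.e. the deviation at the \emph{fixed} parameter value $\theta_{x_0}$ but with the estimated curve $\hat\X_{x_0}$, estimated gradient $\nabla\hat f$, and estimated eigenvector $\hat V$ all plugged in — into (a) the ``pure'' second-derivative fluctuation term and (b) negligible pieces. Writing $\hat a_{x_0}(\theta_{x_0}) = \blangle\nabla\hat f(\hat\X_{x_0}(\theta_{x_0})), G(d^2\hat f(\hat\X_{x_0}(\theta_{x_0})))\brangle$ and expanding $G$ around $d^2 f(p_{x_0})$ and the evaluation point $\hat\X_{x_0}(\theta_{x_0})$ around $p_{x_0}$, the dominant term is $\blangle\nabla f(p_{x_0}), \nabla G(d^2 f(p_{x_0}))\big(d^2\hat f(p_{x_0}) - \mathbb{E}d^2\hat f(p_{x_0})\big)\brangle = \blangle\nabla f(p_{x_0}), d^2\hat f(p_{x_0}) - \mathbb{E}d^2\hat f(p_{x_0})\brangle_{\tilde G(p_{x_0})}$, which upon dividing by $\tilde a'(p_{x_0})$ is precisely $\hat\varphi_{1n}(p_{x_0})$. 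The remaining terms are: the bias $\mathbb{E}d^2\hat f - d^2 f = O(h^2)$, which under (\textbf{H}1) is $o_p$ of the target (cf. Discussion point 9); the trajectory error $\hat\X_{x_0}(\theta_{x_0}) - p_{x_0}$, which by Theorem~\ref{UniformPath} is $O_p(\sqrt{\log n/(nh^5)})$ uniformly and enters multiplied either by smooth bounded factors (giving $O_p(\sqrt{\log n/(nh^5)})$, absorbed) or — crucially — multiplied by $\nabla f(p_{x_0})$ in the direction $V(p_{x_0})$ through the term $\blangle V, \hat\X_{x_0}(\theta_{x_0}) - p_{x_0}\brangle_{\nabla^2 f}$, and quadratic/cross terms of smaller order. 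The uniform control of all these remainders over $x_0\in\mathcal{G}$ uses compactness of $\mathcal{G}$ (Discussion point 4) together with the uniform-in-$x_0$ rates already in hand from Theorem~\ref{UniformPath} and standard uniform bounds on $\sup_{x\in\mathcal{H}}\|d^2\hat f(x) - d^2 f(x)\|$.

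For the second part \eqref{ThetaPart}, the extra hypothesis $\nabla f(p_{x_0}) = 0$ for all $x_0\in\mathcal{G}$ kills the leading term $\hat\varphi_{1n}(p_{x_0})$ (it is literally zero since it carries a factor $\nabla f(p_{x_0})$), so one must go to the next order. Now the surviving contributions to $\hat a_{x_0}(\theta_{x_0})$ are exactly the two pieces assembled in $\hat\varphi_{2n}$: the term $\blangle V(p_{x_0}), \hat\X_{x_0}(\theta_{x_0}) - p_{x_0}\brangle_{\nabla^2 f(p_{x_0})}$ coming from expanding $\nabla\hat f$ (more precisely $\nabla f$) at the perturbed evaluation point — using $\nabla\hat f(p_{x_0}) \approx \mathbb{E}\nabla\hat f(p_{x_0}) \approx \nabla f(p_{x_0}) = 0$ to leading order, so that $\nabla\hat f(\hat\X_{x_0}(\theta_{x_0})) \approx \nabla^2 f(p_{x_0})(\hat\X_{x_0}(\theta_{x_0}) - p_{x_0}) + (\mathbb{E}\nabla\hat f - \nabla f)(\hat\X_{x_0}(\theta_{x_0}))$ — and the deterministic bias term $(\mathbb{E}\nabla\hat f - \nabla f)(\hat\X_{x_0}(\theta_{x_0}))$ paired with $V$. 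Since the trajectory error is now the dominant stochastic input and is of size $O_p(\sqrt{\log n/(nh^5)})$, and since the next-order remainders (products of two such errors, or trajectory error times the $O_p(\sqrt{\log n/(nh^6)})$ eigenvector/gradient fluctuations, or the quadratic-in-$(\hat\theta-\theta)$ remainder which is now $O_p(\log n/(nh^5))$) all sit at $O_p(\log n/(nh^{13/2}))$ once divided by $\tilde a'$, the stated rate follows.

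The main obstacle, and the step I expect to require the most care, is the bookkeeping in the second expansion: showing that when $\nabla f(p_{x_0})=0$, \emph{every} term other than the two explicitly retained in $\hat\varphi_{2n}$ is genuinely $O_p(\log n/(nh^{13/2}))$ uniformly in $x_0$ — in particular that the $O(h^2)$-type bias of $d^2\hat f$ does not interact badly with the (now non-negligible) trajectory fluctuation, and that $\hat V(\hat\X_{x_0}(\theta_{x_0})) - V(p_{x_0})$, which does \emph{not} vanish, nonetheless enters only through terms that carry a compensating small factor (it pairs with $\nabla\hat f(\hat\X_{x_0}(\theta_{x_0}))$ which is itself small here). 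This requires a clean separation of the bias and variance parts of $d^2\hat f$ and of $\nabla\hat f$, keeping the deterministic bias of $\nabla\hat f$ as an explicit term (as (\ref{def-a2n}) does) rather than bounding it, since $h^2 \cdot \sqrt{\log n/(nh^5)}$ is not automatically of smaller order than $\log n/(nh^{13/2})$ and must be tracked. Once the decomposition is organized so that the retained terms are exactly $\hat\varphi_{1n}$ (resp. $\hat\varphi_{2n}$) and the leftovers are grouped by the uniform rates supplied by Theorem~\ref{UniformPath} and the standard kernel rates, the conclusion is immediate.
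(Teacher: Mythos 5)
Your strategy---Taylor-expand the defining equation $\hat a_{x_0}(\hat\theta_{x_0}) = 0$ about $t = \theta_{x_0}$, identify $\hat\varphi_{1n}$ (resp.\ $\hat\varphi_{2n}$) as the leading linear piece, and bound the leftovers uniformly using Theorem~\ref{UniformPath}, Proposition~\ref{ThetaRate} and the standard kernel rates---is essentially the route the paper takes, and your decomposition of $\hat a_{x_0}(\theta_{x_0})$ itself is organized correctly. There are, however, two genuine imprecisions in the remainder bookkeeping.

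First, once you write $\hat\theta_{x_0}-\theta_{x_0} = -\hat a_{x_0}(\theta_{x_0})/a'_{x_0}(\theta_{x_0}) + (\text{remainder})$, the remainder is not only a quadratic Taylor term. Solving the second-order expansion of $\hat a_{x_0}$ at $\theta_{x_0}$ and regrouping gives
\begin{align*}
\hat\theta_{x_0}-\theta_{x_0}+\frac{\hat a_{x_0}(\theta_{x_0})}{a'_{x_0}(\theta_{x_0})}
= \hat a_{x_0}(\theta_{x_0})\,\frac{\hat a'_{x_0}(\theta_{x_0})-a'_{x_0}(\theta_{x_0})}{a'_{x_0}(\theta_{x_0})\,\hat a'_{x_0}(\theta_{x_0})}
-\frac{\hat a''_{x_0}(\xi)}{2\,\hat a'_{x_0}(\theta_{x_0})}\big(\hat\theta_{x_0}-\theta_{x_0}\big)^2,
\end{align*}
and the first, cross, term is of order $O_p\big(\sqrt{\log n/(nh^6)}\big)\cdot O_p\big(\sqrt{\log n/(nh^8)}\big) = O_p\big(\log n/(nh^7)\big)$---\emph{exactly} the target rate, not of smaller order as ``absorbed into $O_p(\log n/(nh^7))$'' suggests. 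Your sketch only accounts for the genuinely quadratic piece and assigns it the too-optimistic order $O_p(\log n/(nh^6))$. The paper's mean-value form $0=\hat a_{x_0}(\theta_{x_0})+\hat a'_{x_0}(\hat\xi_{x_0})(\hat\theta_{x_0}-\theta_{x_0})$ bundles both effects into a single remainder $R_n\le|\hat\theta_{x_0}-\theta_{x_0}|\,|\hat a'_{x_0}(\hat\xi_{x_0})-a'_{x_0}(\theta_{x_0})|$ controlled via a bound of order $\sqrt{\log n/(nh^8)}$ on the derivative deviation, and that is where the $\log n/(nh^7)$ rate actually comes from.

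Second, the parenthetical ``second derivatives of $\hat a_{x_0}$ are uniformly bounded with high probability'' is false under (\textbf{H}1): $\hat a''_{x_0}$ involves $\nabla^2\hat V$, hence fourth-order derivatives of $\hat f$, whose uniform fluctuation $O_p\big(\sqrt{\log n/(nh^{10})}\big)$ diverges. It happens that $\sqrt{\log n/(nh^{10})}\cdot\log n/(nh^6)=\sqrt{\log n/(nh^8)}\cdot\log n/(nh^7)=o_p(\log n/(nh^7))$, and the analogous product in the degenerate case is $o_p(\log n/(nh^{13/2}))$, so the conclusion is not actually harmed---but this needs to be argued, and the mean-value form again avoids invoking $\hat a''_{x_0}$ altogether. (A minor attribution point: the uniform lower bound on $|a'_{x_0}(\theta_{x_0})|=|\tilde a'(\X_{x_0}(\theta_{x_0}))|$ follows from (\textbf{F}5), compactness of $\mathcal L$, and continuity; (\textbf{F}7) is not among the hypotheses of this lemma and plays no role here.)
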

%
%
Note that under standard assumptions, both $\hat \varphi_{1n}(\X_{x_0}(\theta_{x_0}))$ and $\hat \varphi_{2n}(\X_{x_0}(\theta_{x_0})) $ become asymptotically normal. Due to Theorem~\ref{Approx-FilaDiff} this property will then translate to the asymptotic normality of $\hat \X_{x_0}(\hat\theta_{x_0})-\X_{x_0}(\theta_{x_0})$ (see below).\\


First we provide a uniform large sample approximation of the estimator of the filament point $\hat \X_{x_0}(\hat\theta_{x_0})$ from its target $\X_{x_0}(\theta_{x_0}).$ The result provides further insight into the behavior of our filament estimator. It is an immediate consequence of Theorem~\ref{Approx-FilaDiff} and Lemma~\ref{Theta}.


\begin{theorem}\label{UniformXDiff}
Under assumptions (\textbf{F}1)--(\textbf{F}6), (\textbf{K}1)--(\textbf{K}2) and (\textbf{H}1): 
\begin{align}\label{XDiffApp2}
\sup_{x_0\in\mathcal{G}}\|\hat \X_{x_0}(\hat\theta_{x_0})-\X_{x_0}(\theta_{x_0})+ \hat \varphi_{1n}(\X_{x_0}(\theta_{x_0}))V(\X_{x_0}(\theta_{x_0}))\|=O_p\Big(\frac{\log{n}}{nh^7}\Big).
\end{align}

\vspace*{-0.5cm}

If in addition, $\sup_{x_0\in\mathcal{G}}\|\nabla f(\X_{x_0}(\theta_{x_0}))\|=0$, then
\begin{align}\label{XDiffApp}
\sup_{x_0\in\mathcal{G}}\big\|\,\hat \X_{x_0}(\hat\theta_{x_0})-\X_{x_0}(\theta_{x_0})+&\Gamma(\theta_{x_0})\,(\hat \X_{x_0}(\theta_{x_0}) - \X_{x_0}(\theta_{x_0})) +\nonumber \\ 
&\hspace*{2cm}P_V(\theta_{x_0})\hat{b}\,\big\|=O_p\bigg(\frac{\log{n}}{nh^{\frac{13}{2}}}\bigg).
\end{align}
where $\hat{b} = \big({\mathbb E}\nabla \hat f - \nabla f\big)(\hat\X_{x_0}(\theta_{x_0})),$ $P_V(t) = V(\X_{x_0}(t))V(\X_{x_0}(t))^T$ and
\begin{align*}
\Gamma(t):= \big(\tilde{a}^\prime(\X_{x_0}(t))\big)^{-1} P_V(t)\nabla^2 f(\X_{x_0}(t))-{\rm I}_{2 \times 2} \;\in \;\;\bbR^{2\times 2}.
\end{align*}
\end{theorem}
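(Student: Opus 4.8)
The plan is to assemble the statement from the two linearizations already in hand: Theorem~\ref{Approx-FilaDiff}, which reduces $\hat\X_{x_0}(\hat\theta_{x_0})-\X_{x_0}(\theta_{x_0})$ to the scalar deviation $\hat\theta_{x_0}-\theta_{x_0}$ multiplied by $V(\X_{x_0}(\theta_{x_0}))$, and Lemma~\ref{Theta}, which in turn linearizes $\hat\theta_{x_0}-\theta_{x_0}$ — in the deviations of the second derivatives of $\hat f$ in general, and in the deviation of the estimated path together with the gradient bias in the flat case. The only arithmetic used repeatedly is that, under (\textbf{H}1), $nh^9=O(1)$ while $nh^8/(\log n)^3\to\infty$; from this one reads off $\sqrt{\log n/(nh^5)}=o\big(\log n/(nh^7)\big)$, $\log n/(nh^5)=o\big(\log n/(nh^{13/2})\big)$, and that products such as $\sqrt{\log n/(nh^5)}\cdot\sqrt{\log n/(nh^6)}$, $h^2\cdot\sqrt{\log n/(nh^5)}$, and $(\hat\theta_{x_0}-\theta_{x_0})^2$ (which is $O_p(\log n/(nh^5))$ in the flat case) are all $o_p\big(\log n/(nh^{13/2})\big)$. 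These comparisons decide which remainders may be dropped.

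For~(\ref{XDiffApp2}) I would substitute directly. By Theorem~\ref{Approx-FilaDiff}, $\hat\X_{x_0}(\hat\theta_{x_0})-\X_{x_0}(\theta_{x_0})=V(\X_{x_0}(\theta_{x_0}))\,[\hat\theta_{x_0}-\theta_{x_0}]+R_{x_0}$ with $\sup_{x_0\in\mathcal G}\|R_{x_0}\|=O_p\big(\sqrt{\log n/(nh^5)}\big)$, and by the first part of Lemma~\ref{Theta}, $\hat\theta_{x_0}-\theta_{x_0}=-\hat\varphi_{1n}(\X_{x_0}(\theta_{x_0}))+r_{x_0}$ with $\sup_{x_0}|r_{x_0}|=O_p(\log n/(nh^7))$. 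Plugging the second into the first leaves the residuals $V(\X_{x_0}(\theta_{x_0}))\,r_{x_0}$ and $R_{x_0}$; since $\sup_{x\in\mathcal H}\|V(x)\|<\infty$ (by (\textbf{F}1), $V$ is Lipschitz, hence bounded on the compact $\mathcal H\supseteq\mathcal G$) and $\sqrt{\log n/(nh^5)}=o(\log n/(nh^7))$, both residuals are $O_p(\log n/(nh^7))$, which is~(\ref{XDiffApp2}).

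For~(\ref{XDiffApp}) the bare substitution no longer suffices: in the flat case $\hat\theta_{x_0}-\theta_{x_0}$ and $\hat\X_{x_0}(\theta_{x_0})-\X_{x_0}(\theta_{x_0})$ are only of size $\sqrt{\log n/(nh^5)}$, which is \emph{larger} than the target remainder $\log n/(nh^{13/2})$, so the $O_p(\sqrt{\log n/(nh^5)})$ error in Theorem~\ref{Approx-FilaDiff} is not negligible and must be resolved. I would run one more term of the expansion behind Theorem~\ref{Approx-FilaDiff}: write $\hat\X_{x_0}(\hat\theta_{x_0})-\X_{x_0}(\theta_{x_0})=[\hat\X_{x_0}(\hat\theta_{x_0})-\hat\X_{x_0}(\theta_{x_0})]+[\hat\X_{x_0}(\theta_{x_0})-\X_{x_0}(\theta_{x_0})]$, Taylor-expand $s\mapsto\hat\X_{x_0}(s)$ about $\theta_{x_0}$ (its derivative being $\hat V\circ\hat\X_{x_0}$), and replace $\hat V(\hat\X_{x_0}(\theta_{x_0}))$ by $V(\X_{x_0}(\theta_{x_0}))$ using Theorem~\ref{UniformPath} together with the standard uniform rate $\sup_{x\in\mathcal H}\|\hat V(x)-V(x)\|=O_p\big(\sqrt{\log n/(nh^6)}\big)$, to get, uniformly in $x_0\in\mathcal G$,
\[
\hat\X_{x_0}(\hat\theta_{x_0})-\X_{x_0}(\theta_{x_0})=V(\X_{x_0}(\theta_{x_0}))\,[\hat\theta_{x_0}-\theta_{x_0}]+\big[\hat\X_{x_0}(\theta_{x_0})-\X_{x_0}(\theta_{x_0})\big]+O_p\big(\tfrac{\log n}{nh^{13/2}}\big).
\]
Into this I would insert the second part of Lemma~\ref{Theta}, $\hat\theta_{x_0}-\theta_{x_0}=-\hat\varphi_{2n}(\X_{x_0}(\theta_{x_0}))+O_p(\log n/(nh^{13/2}))$, and then expand $\hat\varphi_{2n}$ according to its definition~(\ref{def-a2n}) as a linear functional of $\hat\X_{x_0}(\theta_{x_0})-\X_{x_0}(\theta_{x_0})$ and of $\hat b=({\mathbb E}\nabla\hat f-\nabla f)(\hat\X_{x_0}(\theta_{x_0}))$: at $x=\X_{x_0}(\theta_{x_0})$ one has $V(x)\hat\varphi_{2n}(x)=\tilde a^\prime(x)^{-1}\big[P_V(\theta_{x_0})\nabla^2 f(x)(\hat\X_{x_0}(\theta_{x_0})-x)+P_V(\theta_{x_0})\hat b\big]$, since $V(x)V(x)^T=P_V(\theta_{x_0})$. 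Collecting the $\hat\X_{x_0}(\theta_{x_0})-\X_{x_0}(\theta_{x_0})$ terms, their coefficient becomes ${\rm I}_{2\times2}-\tilde a^\prime(\X_{x_0}(\theta_{x_0}))^{-1}P_V(\theta_{x_0})\nabla^2 f(\X_{x_0}(\theta_{x_0}))=-\Gamma(\theta_{x_0})$, and, together with the bias term, this yields~(\ref{XDiffApp}).

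The hard part is the flat-case refinement in the previous paragraph: proving the three-term expansion with remainder $O_p(\log n/(nh^{13/2}))$ \emph{uniformly over $x_0\in\mathcal G$}. This requires (i) a uniform bound on the second-order Taylor remainder of $s\mapsto\hat\X_{x_0}(s)$ near $\theta_{x_0}$, hence uniform control (in $x_0$ and in the evaluation point) of $\hat V$ and of $\nabla\hat V$ — the latter involving third derivatives of $\hat f$ and handled via (\textbf{F}1), (\textbf{K}1)--(\textbf{K}2), (\textbf{H}1); (ii) the uniform replacement of $\hat V$ evaluated along the estimated path by $V$ along the true path, for which Theorem~\ref{UniformPath} is exactly the tool; and (iii) careful bookkeeping of which products of the rates $\sqrt{\log n/(nh^5)}$, $\sqrt{\log n/(nh^6)}$, $h^2$ and $\log n/(nh^{13/2})$ survive, all ultimately governed by $nh^9=O(1)$. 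By contrast, display~(\ref{XDiffApp2}) and the final collection of terms are routine.
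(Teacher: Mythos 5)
Your derivation of~(\ref{XDiffApp2}) is correct and matches the intended argument: Theorem~\ref{Approx-FilaDiff} supplies the $O_p\big(\sqrt{\log n/(nh^5)}\big)$ linearization in $\hat\theta_{x_0}-\theta_{x_0}$, the first part of Lemma~\ref{Theta} supplies $\hat\theta_{x_0}-\theta_{x_0}=-\hat\varphi_{1n}(\X_{x_0}(\theta_{x_0}))+O_p(\log n/(nh^7))$, and under $({\bf H}1)$ indeed $\sqrt{\log n/(nh^5)}\big/\big(\log n/(nh^7)\big)=\sqrt{nh^9/\log n}\to 0$, so both residuals are absorbed into the target rate.

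Your instinct about~(\ref{XDiffApp}) is also right, and it sharpens a point the paper skips: the blanket claim that the theorem is ``an immediate consequence of Theorem~\ref{Approx-FilaDiff} and Lemma~\ref{Theta}'' is imprecise in the flat case, since the stated $O_p\big(\sqrt{\log n/(nh^5)}\big)$ error in Theorem~\ref{Approx-FilaDiff} is of order $\sqrt{nh^8/\log n}\cdot(\log n/(nh^{13/2}))\gg\log n/(nh^{13/2})$ and therefore not negligible. What you actually need — and this is exactly what the proof of Theorem~\ref{Approx-FilaDiff} already gives in display~(\ref{uu1}) — is the exact decomposition
\begin{align*}
\hat\X_{x_0}(\hat\theta_{x_0})-\X_{x_0}(\theta_{x_0})
&= V(\X_{x_0}(\theta_{x_0}))[\hat\theta_{x_0}-\theta_{x_0}]
+\big[\hat V(\hat\X_{x_0}(\tilde\theta_{x_0}))-V(\X_{x_0}(\theta_{x_0}))\big][\hat\theta_{x_0}-\theta_{x_0}]\\
&\qquad +\big[\hat\X_{x_0}(\theta_{x_0})-\X_{x_0}(\theta_{x_0})\big],
\end{align*}
with the third term kept \emph{explicit} rather than pushed into the error. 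The only genuine remainder is then the middle ``cross term,'' whose rate in the flat case is $O_p\big(\sqrt{\log n/(nh^6)}\big)\cdot O_p\big(\sqrt{\log n/(nh^5)}\big)=O_p\big(\log n/(nh^{11/2})\big)=o_p\big(\log n/(nh^{13/2})\big)$. So the one-term mean-value form already closes the argument; your proposed second-order Taylor expansion of $s\mapsto\hat\X_{x_0}(s)$, with the attendant uniform control of $\nabla\hat V$, is a correct but stronger tool than is needed. The rest of your collection of terms (inserting the second part of Lemma~\ref{Theta}, expanding $\hat\varphi_{2n}$ via~(\ref{def-a2n}), and reading off $-\Gamma(\theta_{x_0})$) is right.

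One thing you should flag rather than pass over: following~(\ref{def-a2n}) exactly, as you do, the bias contribution comes out as $\big(\tilde a^\prime(\X_{x_0}(\theta_{x_0}))\big)^{-1}P_V(\theta_{x_0})\hat b$, not $P_V(\theta_{x_0})\hat b$ as written in the theorem. Your computation is the correct one given the stated definition of $\hat\varphi_{2n}$; the mismatch appears to be a slip in the theorem's display. Since you present your final collection as literally ``yielding~(\ref{XDiffApp}),'' you should either keep the $\big(\tilde a^\prime\big)^{-1}$ factor and note the discrepancy, or point out that the statement as printed needs that factor restored.
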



 Recall that $\hat \varphi_{1n}(x)$ is a real-valued random variable. Thus (\ref{XDiffApp2}) says that the asymptotic distribution of $\hat \X_{x_0}(\hat\theta_{x_0})-\X_{x_0}(\theta_{x_0})$ is degenerate, concentrating on the one-dimensional linear subspace spanned by $V(\X_{x_0}(\theta_{x_0})).$ Also note that the approximating quantity in Theorem~\ref{UniformXDiff} only depends on the filament points. This then implies that the extreme value distribution of $\hat \X_{x_0}(\hat\theta_{x_0})-\X_{x_0}(\theta_{x_0})$ over all $x_0 \in \mathcal{G}$ in fact only depends on the filament $\mathcal{L}$ rather than $\mathcal{G}$ (cf. Theorem~\ref{ConfBand}). Moreover, the approximations given in the above theorem imply exact rates of convergence for our filament estimates for fixed $x_0$. The following corollary makes this precise.

\begin{corollary}\label{distr-filament-est1}
Under assumptions (\textbf{F}1)--(\textbf{F}6), (\textbf{K}1)--(\textbf{K}2) and (\textbf{H}1),
for every fixed $x_0 \in {\mathcal{G}}$
\begin{align*}
\sqrt{nh^6}[\hat \X_{x_0}(\hat\theta_{x_0})-\X_{x_0}(\theta_{x_0})]\rightarrow Z(\X_{x_0}(\theta_{x_0}))V(\X_{x_0}(\theta_{x_0})),
\end{align*}
where $Z(\X_{x_0}(\theta_{x_0}))$ is a mean zero normal random variable with variance
\begin{align*}
f(\X_{x_0}(\theta_{x_0}))\|W(\X_{x_0}(\theta_{x_0}))\|^2_{\mathbf{R}},
\end{align*}
where $ W(x) = \big(\tilde{a}^\prime(x)\big)^{-1}\tilde G(x)\nabla f(x)^T \in \bbR^3.$ 
\end{corollary}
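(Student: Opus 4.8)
The plan is to derive the corollary directly from Theorem~\ref{UniformXDiff}, or more precisely from its pointwise ($x_0$ fixed) specialization, together with the approximations in Lemma~\ref{Theta} and the asymptotic normality of the linear statistics $\hat\varphi_{1n}$ and $\hat\varphi_{2n}$. First I would observe that by (\ref{XDiffApp2}) of Theorem~\ref{UniformXDiff}, for fixed $x_0$,
\begin{align*}
\sqrt{nh^6}\big[\hat \X_{x_0}(\hat\theta_{x_0})-\X_{x_0}(\theta_{x_0})\big] = -\sqrt{nh^6}\,\hat\varphi_{1n}(\X_{x_0}(\theta_{x_0}))\,V(\X_{x_0}(\theta_{x_0})) + O_p\big(\sqrt{nh^6}\cdot\tfrac{\log n}{nh^7}\big),
\end{align*}
and under ({\bf H}1) the remainder is $O_p\big(\tfrac{\log n}{\sqrt{nh^8}}\big) = o_p(1)$. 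Hence everything reduces to establishing that $\sqrt{nh^6}\,\hat\varphi_{1n}(\X_{x_0}(\theta_{x_0})) \Rightarrow -Z(\X_{x_0}(\theta_{x_0}))$, a scalar CLT.

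For that CLT, I would write $\hat\varphi_{1n}(x) = \tfrac{1}{\tilde a'(x)}\langle \nabla f(x), d^2\hat f(x) - \mathbb E\, d^2\hat f(x)\rangle_{\tilde G(x)} = \tfrac{1}{\tilde a'(x)}\,(\tilde G(x)^T\nabla f(x))^T\big(d^2\hat f(x)-\mathbb E\,d^2\hat f(x)\big)$, so with $x = \X_{x_0}(\theta_{x_0})$ this is $W(x)^T\big(d^2\hat f(x)-\mathbb E\,d^2\hat f(x)\big)$ in the notation $W(x) = (\tilde a'(x))^{-1}\tilde G(x)^T\nabla f(x)$ from the corollary (noting the harmless transpose discrepancy relative to the displayed $W(x)$). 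Each component $\hat f^{(i,j)}(x) - \mathbb E \hat f^{(i,j)}(x)$ is an average of i.i.d.\ terms $(nh^{2})^{-1}\sum_k h^{-(i+j)} K^{(i,j)}\!\big(\tfrac{x-X_k}{h}\big)$ minus its mean; a standard kernel-smoothing variance computation gives $\mathrm{Var}\big(\sqrt{nh^6}\,(d^2\hat f(x) - \mathbb E\,d^2\hat f(x))\big) \to f(x)\,\mathbf R$ where $\mathbf R = \mathbf R(d^2 K)$, because the cross-covariance of $h^{-2}K^{(i,j)}((x-X_1)/h)$ and $h^{-2}K^{(k,\ell)}((x-X_1)/h)$ is $\sim h^{-6} f(x)\int K^{(i,j)}(z)K^{(k,\ell)}(z)\,dz$. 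Then Lyapunov's (or Lindeberg's) condition for the triangular array is routine to verify since $K$ and its derivatives are bounded with compact support, giving $\sqrt{nh^6}\,(d^2\hat f(x)-\mathbb E\,d^2\hat f(x)) \Rightarrow N(0, f(x)\mathbf R)$. Contracting with the fixed vector $W(x)$ yields $\sqrt{nh^6}\,\hat\varphi_{1n}(x) \Rightarrow N\big(0, f(x)\,W(x)^T\mathbf R\, W(x)\big) = N\big(0, f(x)\|W(x)\|^2_{\mathbf R}\big)$, which is exactly the stated variance of $Z(x)$; the sign is immaterial by symmetry of the limiting normal. Combining with the first display and Slutsky completes the proof.

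The main obstacle is not conceptual but bookkeeping: one must carefully justify that the bias term $\mathbb E\,d^2\hat f(x) - d^2 f(x) = O(h^2)$ does not enter — it does not, because $\hat\varphi_{1n}$ is defined with the centering $\mathbb E\,d^2\hat f(x)$, not $d^2 f(x)$, so the approximation error carried through Lemma~\ref{Theta} and Theorem~\ref{UniformXDiff} already absorbs it (this is precisely Remark~9 in the discussion of assumptions). One should also double-check the precise constant $\tilde a'(x)$ and that it is nonzero on $\mathcal L$ — this is guaranteed by ({\bf F}5) (equivalently $\tilde a'(\X_{x_0}(\theta_{x_0})) = \langle\nabla\langle\nabla f,V\rangle,V\rangle\neq 0$) — and that $W(x)$ is well defined and finite, which follows from ({\bf F}1), ({\bf F}2), ({\bf F}7) and ({\bf K}2). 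Finally, a small point: the corollary is stated only pointwise, so no uniformity (and hence none of the more delicate Gaussian-field machinery behind Theorem~\ref{ConfBand}) is needed here; one genuinely only needs the $x_0$-fixed case of (\ref{XDiffApp2}) plus a one-dimensional CLT.
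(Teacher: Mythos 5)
Your proof is correct and follows essentially the same route as the paper's: both use the pointwise specialization of (\ref{XDiffApp2}) from Theorem~\ref{UniformXDiff}, discard the $O_p(\log n/(nh^7))$ remainder under $(\mathbf{H}1)$, and invoke asymptotic normality of $\sqrt{nh^6}\big(d^2\hat f(x)-\mathbb{E}d^2\hat f(x)\big)\Rightarrow \mathscr{N}(0,f(x)\mathbf{R})$ (which the paper cites from Duong et al.\ 2008 rather than verifying the Lyapunov condition directly as you do), then contracts with $W(\X_{x_0}(\theta_{x_0}))$. You also correctly flag the transpose typo in the definition of $W(x)$ and the immateriality of the sign.
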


Note that when $\|\nabla f(\X_{x_0}(\theta_{x_0}))\| = 0$,  the variance of $Z(\X_{x_0}(\theta_{x_0}))$ is zero, and thus the limit in the above corollary is degenerate. In this case we have the following result:

\begin{corollary}\label{distr-filament-est2}
Suppose that the assumptions from Corollary~\ref{distr-filament-est1} hold, and $\|\nabla f(\X_{x_0}(\theta_{x_0}))\| = 0$. Then there exists $m(\theta_{x_0}) \in \bbR^2$ and $\Sigma(\theta_{x_0}) \in \bbR^{2\times 2}$ such that with $\beta$ from assumption ({\textbf H}1),
\begin{align*}
\sqrt{nh^5}[\hat \X_{x_0}(\hat\theta_{x_0})-\X_{x_0}(\theta_{x_0})]\rightarrow \mathscr{N}\Big(\mu_{\theta_{x_0}}, \sigma^2_{\theta_{x_0}} \Big),
\end{align*}
with $\sigma^2_{\theta_{x_0}} = \Gamma(\theta_{x_0})\Sigma(\theta_{x_0})\Gamma(\theta_{x_0})^T$ and $\mu_{\theta_{x_0}} = -\Gamma(\theta_{x_0})\,m(\theta_{x_0}) - \beta P_V (\theta_{x_0})b(\theta_{x_0})$ where $b(t) = \frac{1}{2} \mu_2(K)\,\big(\big(f^{(3,0)} +f^{(1,2)}\big)(\X_{x_0}(t)),\,\big(f^{(0,3)}+ f^{(2,1)}\big)(\X_{x_0}(t))\big)^T$ with $\mu_2(K)$ from assumption ({\textbf K}1), and $\Gamma(\theta_{x_0})\in \bbR^{2 \times 2}$ as given in Theorem~\ref{UniformXDiff}.
\end{corollary}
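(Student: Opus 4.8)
The plan is to read the result off the uniform expansion (\ref{XDiffApp}) of Theorem~\ref{UniformXDiff}, together with the functional limit theorem for the integral-curve estimate (Theorem~\ref{Gaussian}) and a routine kernel bias expansion. Fix $x_0\in\mathcal{G}$ with $\|\nabla f(\X_{x_0}(\theta_{x_0}))\|=0$. The argument underlying Lemma~\ref{Theta} and Theorem~\ref{UniformXDiff}, run for this single $x_0$ (where only the pointwise condition $\nabla f(\X_{x_0}(\theta_{x_0}))=0$ is used, not the uniform one), gives the pointwise version of (\ref{XDiffApp}),
\begin{align*}
\hat \X_{x_0}(\hat\theta_{x_0})-\X_{x_0}(\theta_{x_0}) = -\Gamma(\theta_{x_0})\big(\hat \X_{x_0}(\theta_{x_0}) - \X_{x_0}(\theta_{x_0})\big) - P_V(\theta_{x_0})\,\hat b + R_n,
\end{align*}
with $\hat b=({\mathbb E}\nabla\hat f-\nabla f)(\hat\X_{x_0}(\theta_{x_0}))$ and $R_n=O_p(\log n/(nh^{13/2}))$. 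Multiplying by $\sqrt{nh^5}$ turns $R_n$ into $O_p\big(\log n/(\sqrt{n}\,h^{4})\big)=o_p(1)$, since $nh^8/(\log n)^3\to\infty$ by (\textbf{H}1). It then remains to identify the limits of the two scaled leading terms and to combine them by Slutsky.

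For the first term, I apply Theorem~\ref{Gaussian} on an interval $[-T_{min},T_{max}]$ containing $0$ and $\theta_{x_0}$ with $\{\X_{x_0}(t):t\in[-T_{min},T_{max}]\}\subset\mathcal{H}$ (such an interval exists because $x_0\rightsquigarrow\mathcal{L}$) and with (\ref{Gammax0}) supplied by (\textbf{F}4). Then $\sqrt{nh^5}(\hat\X_{x_0}(\cdot)-\X_{x_0}(\cdot))$ converges weakly in $C[-T_{min},T_{max}]$ to the solution $\omega$ of the linear SDE (\ref{SDEMain}); evaluating at $t=\theta_{x_0}$ (a continuous functional) gives $\sqrt{nh^5}(\hat\X_{x_0}(\theta_{x_0})-\X_{x_0}(\theta_{x_0}))\to\omega(\theta_{x_0})$ in distribution. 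Since (\ref{SDEMain}) is a linear SDE with continuous, deterministic, time-dependent coefficients started from $\omega(0)=0$, $\omega$ is a Gaussian process; I set $m(\theta_{x_0}):={\mathbb E}\,\omega(\theta_{x_0})$ and $\Sigma(\theta_{x_0}):=\Cov(\omega(\theta_{x_0}))$, both obtained explicitly via the fundamental matrix $\Phi$ of $\dot\Phi(t)=\nabla V(\X_{x_0}(t))\Phi(t)$, $\Phi(0)={\rm I}_{2\times2}$: the mean $m$ solves the deterministic part of (\ref{SDEMain}), and $\Sigma$ solves the Lyapunov equation $\dot\Sigma(t)=\nabla V(\X_{x_0}(t))\Sigma(t)+\Sigma(t)\nabla V(\X_{x_0}(t))^{T}+\Lambda(t)$, $\Sigma(0)=0$, with $\Lambda(t)=\tilde G(\X_{x_0}(t))\big[\int\int{\mathbb K}(\X_{x_0}(t),\tau,z)f(\X_{x_0}(t))\,dz\,d\tau\big]\tilde G(\X_{x_0}(t))^{T}$.

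For the bias term, I use the standard expansion under (\textbf{F}1), (\textbf{K}1): $({\mathbb E}\nabla\hat f-\nabla f)(x)=\tfrac{h^2}{2}\mu_2(K)\big((f^{(3,0)}+f^{(1,2)})(x),\,(f^{(0,3)}+f^{(2,1)})(x)\big)^{T}+r_h(x)$ with $r_h$ and $\nabla r_h$ of order $o(h^2)$ uniformly on $\mathcal{H}$. Since $\|\hat\X_{x_0}(\theta_{x_0})-\X_{x_0}(\theta_{x_0})\|=O_p((nh^5)^{-1/2})$ by Theorem~\ref{Gaussian} and $h^2(nh^5)^{-1/2}=o(h^2)$ (using $nh^5\to\infty$ from (\textbf{H}1)), a first-order Taylor expansion of the bias function around $\X_{x_0}(\theta_{x_0})$ yields $\hat b=h^2 b(\theta_{x_0})+o_p(h^2)$ with $b$ as in the statement; hence $\sqrt{nh^5}\,P_V(\theta_{x_0})\hat b=\sqrt{nh^9}\,P_V(\theta_{x_0})b(\theta_{x_0})+o_p(1)$, which by $nh^9\to\beta$ converges to the deterministic vector $\beta^{1/2}P_V(\theta_{x_0})b(\theta_{x_0})$ (vanishing when $\beta=0$).

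Combining the three pieces by Slutsky, $\sqrt{nh^5}[\hat\X_{x_0}(\hat\theta_{x_0})-\X_{x_0}(\theta_{x_0})]$ converges in distribution to $-\Gamma(\theta_{x_0})\omega(\theta_{x_0})-\beta^{1/2}P_V(\theta_{x_0})b(\theta_{x_0})$, a bivariate normal vector with mean $\mu_{\theta_{x_0}}=-\Gamma(\theta_{x_0})m(\theta_{x_0})-\beta^{1/2}P_V(\theta_{x_0})b(\theta_{x_0})$ and covariance $\sigma^2_{\theta_{x_0}}=\Gamma(\theta_{x_0})\Sigma(\theta_{x_0})\Gamma(\theta_{x_0})^{T}$, as claimed ($\Gamma(\theta_{x_0})$ being well defined since (\textbf{F}5) forces $\tilde a'(\X_{x_0}(\theta_{x_0}))\neq0$). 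The content beyond Theorem~\ref{UniformXDiff} is modest: the two points needing genuine care are (i) replacing $\hat b$, evaluated at the \emph{random} argument $\hat\X_{x_0}(\theta_{x_0})$, by its value at $\X_{x_0}(\theta_{x_0})$ with an error that is $o_p(h^2)$ — which is exactly what forces the uniform control of the bias function \emph{and} its gradient — and (ii) the bookkeeping required to express $m(\theta_{x_0})$ and $\Sigma(\theta_{x_0})$ explicitly from (\ref{SDEMain}); everything else is the elementary rate check under (\textbf{H}1) together with Slutsky.
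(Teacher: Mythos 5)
Your proposal follows essentially the same route as the paper: invoke the expansion \eqref{XDiffApp} of Theorem~\ref{UniformXDiff}, check via $(\mathbf H1)$ that the remainder is $o_p(1/\sqrt{nh^5})$, identify the weak limit of $\sqrt{nh^5}(\hat\X_{x_0}(\theta_{x_0})-\X_{x_0}(\theta_{x_0}))$ from Theorem~\ref{Gaussian}, expand the kernel bias, and combine by Slutsky. The paper's own proof is a compressed version of exactly this argument (it writes the limit directly as $\mathscr{N}(m(\theta_{x_0}),\Sigma(\theta_{x_0}))$ satisfying ODEs derived from \eqref{SDEMain}, then cites \eqref{XDiffApp} together with the bias rate). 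Your version is more careful on two subsidiary points worth keeping: (i) you note that the corollary only hypothesizes the pointwise condition $\nabla f(\X_{x_0}(\theta_{x_0}))=0$ for the fixed $x_0$, whereas \eqref{XDiffApp} as stated assumes the supremum over $\mathcal{G}$, so one must rerun the argument pointwise — a gap the paper's proof glides over; and (ii) you explicitly justify replacing the random evaluation point $\hat\X_{x_0}(\theta_{x_0})$ in the bias term by $\X_{x_0}(\theta_{x_0})$ via a Taylor bound, which is exactly what the paper's one-line claim \eqref{Teil3} suppresses.

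One discrepancy you should be aware of: your derivation yields the factor $\beta^{1/2}$ in front of $P_V(\theta_{x_0})b(\theta_{x_0})$, coming from $\sqrt{nh^5}\,h^2=\sqrt{nh^9}\to\sqrt{\beta}$, whereas the corollary as stated (and the paper's proof, which asserts $\sqrt{nh^5}h^2\to\beta$) uses $\beta$. The arithmetic clearly gives $\sqrt{\beta}$, and this is also dimensionally consistent with Theorem~\ref{Gaussian}, whose drift already carries a $\sqrt{\beta}/2$ factor so that $m(\theta_{x_0})$ scales like $\sqrt{\beta}$; your constant is the correct one and the paper's appears to be a typographical slip. A minor wording point: you do not actually need $\nabla r_h = o(h^2)$ for the Taylor step; $\nabla B(x)={\mathbb E}\nabla^2\hat f(x)-\nabla^2 f(x)=O(h^2)$ uniformly suffices, since multiplying by $\|\hat\X_{x_0}(\theta_{x_0})-\X_{x_0}(\theta_{x_0})\|=O_p((nh^5)^{-1/2})$ already gives $o_p(h^2)$.
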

The final corollary implies that the projection on the tangent direction to the filament, i.e. onto $V^\bot(\X_{x_0}(\theta_{x_0})),$ is of smaller order than the projection on the direction orthogonal to the filament (assuming that the filament is not flat at this point).

\begin{corollary}\label{filament-est3} Suppose that the assumptions of Corollary~\ref{distr-filament-est1} hold. Then
\begin{align}\label{dev-proj-grad}
\sup_{x_0\in\mathcal{G}}\Big|\blangle\hat \X_{x_0}(\hat\theta_{x_0})-\X_{x_0}(\theta_{x_0}),  V^\bot(\X_{x_0}(\theta_{x_0}))\brangle\Big| = O_p\Big(\frac{\log n}{nh^{7}} \Big).
\end{align}
\end{corollary}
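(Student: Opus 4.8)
The plan is to derive Corollary~\ref{filament-est3} directly from the uniform approximation in Theorem~\ref{UniformXDiff}, equation (\ref{XDiffApp2}), by projecting onto $V^\bot(\X_{x_0}(\theta_{x_0}))$. Write $P_0 := \X_{x_0}(\theta_{x_0})$ for brevity. From (\ref{XDiffApp2}) we have
\begin{align*}
\hat \X_{x_0}(\hat\theta_{x_0})-P_0 = -\,\hat \varphi_{1n}(P_0)\,V(P_0) + r_{x_0},
\qquad \sup_{x_0\in\mathcal{G}}\|r_{x_0}\| = O_p\Big(\tfrac{\log n}{nh^7}\Big).
\end{align*}
Taking the inner product with $V^\bot(P_0)$ and using $\langle V(P_0), V^\bot(P_0)\rangle = 0$, the leading term vanishes identically and we are left with $\langle \hat \X_{x_0}(\hat\theta_{x_0})-P_0,\,V^\bot(P_0)\rangle = \langle r_{x_0}, V^\bot(P_0)\rangle$. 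Then I would bound this using Cauchy–Schwarz by $\|r_{x_0}\|\,\|V^\bot(P_0)\|$, and invoke that $\|V^\bot(x)\| = \|V(x)\|$ is bounded uniformly on $\mathcal{H}$ (this is guaranteed by the construction of $V$ via $G$ and assumption (\textbf{F}2), as noted after (\ref{FDef}); recall $V^\bot$ is the other eigenvector, of the same norm up to the normalization choice, and in any case bounded on the compact set $\mathcal{H} \supset \mathcal{G}$). Taking the supremum over $x_0 \in \mathcal{G}$ yields the claimed rate $O_p\big(\tfrac{\log n}{nh^7}\big)$.

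The only subtlety — and the step I would be most careful about — is justifying that it is $V^\bot(P_0)$ rather than some nearby direction that exactly annihilates the leading term. Here this is immediate because the leading term in (\ref{XDiffApp2}) is an exact scalar multiple of $V(P_0)$ (not an approximation up to lower-order error in the direction), so its projection onto the genuinely orthogonal vector $V^\bot(P_0)$ is exactly zero, not merely small. This is why the statement can claim the fast rate $O_p(\log n/(nh^7))$ rather than the rate $O_p(1/\sqrt{nh^6})$ that governs the full deviation in Corollary~\ref{distr-filament-est1}: the component of $\hat \X_{x_0}(\hat\theta_{x_0})-P_0$ orthogonal to $V(P_0)$, i.e. tangent to the filament (since $\nabla\langle \nabla f, V\rangle$ is normal to $\mathcal{L}$ and $V$ is transverse to $\mathcal{L}$ by (\textbf{F}5), $V(P_0)$ spans the normal-ish direction while $V^\bot(P_0)$ has a genuine tangential component), picks up no contribution from the dominant $\hat\varphi_{1n}$ term and inherits only the remainder rate. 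One should also note that (\textbf{F}7) ($\nabla f(x)^T\tilde G(x) \neq 0$ on $\mathcal{L}$) ensures $\tilde a'(P_0) \neq 0$ so that $\hat\varphi_{1n}(P_0)$ and hence the whole approximation in (\ref{XDiffApp2}) is well defined, but this is already built into Theorem~\ref{UniformXDiff}, which is all we need to cite.

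In short, the proof is a one-line consequence of (\ref{XDiffApp2}): project, kill the leading term by orthogonality, apply Cauchy–Schwarz together with the uniform boundedness of $\|V^\bot\|$ on the compact $\mathcal{G}$, and take the supremum. No new estimates beyond Theorem~\ref{UniformXDiff} are required, and there is essentially no obstacle — the content of the corollary lies entirely in the preceding, harder results, and its role is to make explicit the geometric interpretation that the tangential component of the filament estimation error is asymptotically negligible compared to the normal component.
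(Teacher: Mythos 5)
Your argument is correct and coincides with the paper's proof: project the approximation (\ref{XDiffApp2}) onto $V^\bot(\X_{x_0}(\theta_{x_0}))$, observe that the leading term $-\hat\varphi_{1n}(\X_{x_0}(\theta_{x_0}))V(\X_{x_0}(\theta_{x_0}))$ is annihilated exactly by orthogonality, and bound the remainder via Cauchy--Schwarz together with the uniform boundedness of $V^\bot$ on $\mathcal{H}$. One small slip in a side remark: it is assumption (\textbf{F}5), not (\textbf{F}7), that guarantees $\tilde a'(\X_{x_0}(\theta_{x_0}))\neq 0$ (indeed $\tilde a'(x) = \blangle\nabla\blangle\nabla f(x),V(x)\brangle,V(x)\brangle$; (\textbf{F}7) instead ensures $A(x)\neq 0$ so that $g$ is well defined), though as you say this is already subsumed in simply citing Theorem~\ref{UniformXDiff}.
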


\section{Summary and outlook}\label{summary}

In this paper we consider the nonparametric estimation of filaments. We compare the estimated point on a filament $\hat{\X}_{x_0}(\hat{\theta}_{x_0})$ obtained by following an estimated integral curve $\hat{\X}_{x_0}(t)$ with starting point $x_0$ to the corresponding population quantity $\X_{x_0}(\theta_{x_0}).$ Here $\X_{x_0}$ is an integral curve driven by the second eigenvector of the Hessian of the underlying pdf $f$, and $\hat{\X}_{x_0}$ its plug-in estimate obtained by using a kernel estimator. Our main result derives the exact asymptotic distribution of the appropriately standardized deviation of $\hat{\X}_{x_0}(\hat{\theta}_{x_0}) - \X_{x_0}(\theta_{x_0})$ uniformly over a set of starting points $x_0 \in \mathcal{G}$ with ${\cal L} = \{\X_{x_0}(\theta_{x_0}): \; x_0\in\cal G\}$ (Theorem~\ref{ConfBand}). Along the way we derive several useful results about the estimation of integral curves (Theorem~\ref{Gaussian} and Theorem~\ref{UniformPath}). \\[5pt]
The proof of our main result Theorem~\ref{ConfBand} rests on a probabilistic result on the extreme value behavior of certain non-stationary Gaussian fields indexed by growing manifolds.  The main reason for this approach to work is an approximation of the deviation $\hat{\X}_{x_0}(\hat{\theta}_{x_0}) - \X_{x_0}(\theta_{x_0})$  by a linear function of the second derivatives of the kernel density estimator (Theorem~\ref{UniformXDiff}), which in turn can be approximated by a Gaussian field.\\[5pt]
The same approach is expected to work in other situations, as long as we consider linear functionals of (derivates of) kernel estimators. One possible application that will be considered elsewhere is the estimation of level set of pdf's or regression functions.\\[5pt]
%
%
Algorithms for finding filaments, or perhaps more importantly, for finding filament structures (i.e. the union of possibly intersecting filaments) have been developed in Qiao (2013). This research will be published elsewhere.

\section{Proofs}\label{proofs}


The proof of Theorem~\ref{Gaussian} follows similar ideas as the proof of Theorem 1 from Koltchinskii et al. (2007). The necessary modifications are more or less straightforward. More details are available from the authors. In what follows we use the notation $\| A \|_F$ to denote the Frobenius norm of a matrix $A$.
\subsection{Proof of Theorem~\ref{UniformPath}}
Since
\begin{align*}
&\hspace*{1cm}\sup_{x_0\in \mathcal{G}}\; \sup_{t\in[-T_{x_0}^{min}, T_{x_0}^{max}]}\|\hat \X_{x_0}(t)-\X_{x_0}(t)\| \\
& = \max\Big\{\sup_{x_0\in \mathcal{G}} \; \sup_{t\in[0, T_{x_0}^{max}]}\|\hat \X_{x_0}(t)-\X_{x_0}(t)\|, \sup_{x_0\in \mathcal{G}}\; \sup_{t\in[-T_{x_0}^{min}, 0]}\|\hat \X_{x_0}(t)-\X_{x_0}(t)\|\Big\},
\end{align*}
it suffices to prove that
\begin{align}
\sup_{x_0\in \mathcal{G}} \; \sup_{t\in[0, T_{x_0}^{max}]}\|\hat \X_{x_0}(t)-\X_{x_0}(t)\| &= O_p\bigg(\sqrt{\frac{\log{n}}{nh^5}\;}\;\bigg),\label{UniformPath1}
\end{align}
and that the same result holds with $[0,T_{x_0}^{max}]$ replaced by $[-T_{x_0}^{max},0]$. The latter result can be proven similarly to (\ref{UniformPath1}) by considering the relationship between $\X_{x_0}$ and $\tilde{\X}_{x_0}$ defined in (\ref{def-integral-curve-reverse}). For simplicity we write $T_{x_0}^{max}$ as $T_{x_0}$.\\[5pt]
With $\hat \Z_{x_0}$ satisfying the differential equation
\begin{align}\label{Z1}
\frac{d\hat \Z_{x_0}(t)}{dt}=\hat V(\X_{x_0}(t))-V(\X_{x_0}(t))+\nabla V (\X_{x_0}(t))\hat \Z_{x_0}(t), \quad \hat \Z_{x_0}(0)=0,\\[-18pt]\nonumber
\end{align}
and $\hat \Y_{x_0}(t)=\hat \X_{x_0}(t)-\X_{x_0}(t)$, we denote $\hat \D_{x_0}(t):=\hat \Y_{x_0}(t)-\hat \Z_{x_0}(t)$. Following similar arguments as in the proof on page 1586 of Koltchinskii et al. (2007), we can show that
\begin{align*}
\sup_{x_0\in \mathcal{G}, t\in[0,T_{x_0}]}\|\hat \D_{x_0}(t)\|=o_p\bigg(\sup_{x_0\in \mathcal{G}, t\in[0,T_{x_0}]}\|\hat \Z_{x_0}(t)\|\bigg) \quad \textrm{as} \quad n\rightarrow \infty.
\end{align*}
%
%
To show the assertion of the theorem we now show  $\sup_{x_0 \in {\mathcal{G}},\,t \in [0,T_{x_0}]}\|\hat \Z_{x_0}(t)\| = O_p\big(\sqrt{\frac{\log n }{nh^5} }\big).$  Since
\begin{align*}
\|\hat \Z_{x_0}(t)\|\leq \bigg\|\int_0^t(\hat V-V)(\X_{x_0}(s))ds\bigg\|+\int_0^t\|\nabla V (\X_{x_0}(s))\|_F\|\hat \Z_{x_0}(s)\|ds,
\end{align*}
Gronwall's inequality (see Gronwall (1919)) implies
\begin{align*}
\|\hat \Z_{x_0}(t)\| \leq \sup_{x_0\in \mathcal{G}, t\in[0,T_{x_0}]}\bigg\|\int_0^{t}(\hat V-V)(\X_{x_0}(s)) ds\bigg\| \;e^{\int_0^t\|\nabla V (\X_{x_0}(s))\|_Fds},
\end{align*}
so that
\begin{align}\label{S3SEC2}
\sup_{x_0\in \mathcal{G}, t\in[0,T_{x_0}]}\|\hat \Z_{x_0}(t)\| \leq C\sup_{x_0\in \mathcal{G}, t\in[0,T_{x_0}]}\bigg\|\int_0^{t}(\hat V-V)(\X_{x_0}(s)) ds\bigg\|
\end{align}
for some constant $C$. We have
\begin{align*}
&\int_0^{t}\Big[(\hat V-V)(\X_{x_0}(s))-\nabla G(d^2  f(\X_{x_0}(s))) d^2 (\hat f - f) (\X_{x_0}(s))\Big]ds\\
&\hspace{2cm}=\left(\begin{array}{c}
\int_0^{t}d^2 (\hat f - f)  (\X_{x_0}(s))^T M_1(s)\;d^2 (\hat f - f)  (\X_{x_0}(s))ds\\
\int_0^{t}d^2 (\hat f - f)  (\X_{x_0}(s))^T M_2(s)\;d^2 (\hat f - f)  (\X_{x_0}(s))ds\end{array}\right),
\end{align*}
where
\begin{align}\label{MiInt}
M_i(s):=\int_0^1\nabla^2G_i(d^2  f(\X(s))+\tau d^2 (\hat f - f)  (\X(s)))d\tau, \quad i=1,2.
\end{align}
Thus, by using the fact that under our assumptions $\sup_{x\in\mathbb{R}^2}\|\nabla^2 \hat f(x) - \nabla^2f  (x)\|_F = O_P\Big(\sqrt{\frac{\log{n}}{nh^{6}}}\Big)$, 
\begin{align}\label{S4SEC2}
&\sup_{x_0\in \mathcal{G} \atop t\in[0,T_{x_0}]}\bigg\|\int_0^{t}\Big[(\hat V-V)(\X_{x_0}(s))-\nabla G(d^2  f(\X_{x_0}(s)))d^2 (\hat f - f)  (\X_{x_0}(s))\Big]ds\bigg\|\nonumber\\
&\hspace{0.3cm}\leq \sup_{x_0 \in \mathcal{G}}T_{x_0}\sup\limits_{\substack{i=1,2 \\ w\in \mathbb{R}^2}}\|\nabla^2 G_i(w)\|\bigg(\sup_{x\in\mathbb{R}^2}\|d^2 (\hat f - f)  (x)\|\bigg)^2=O_p\bigg(\frac{\log{n}}{nh^{6}}\bigg).
\end{align}
It remains to show that $\sup_{x_0 \in \mathcal{G}}\big|\int_0^{t}\nabla G(d^2  f(\X_{x_0}(s)))d^2 (\hat f - f)  (\X_{x_0}(s))ds\big| = O_p\Big(\sqrt{\frac{\log n}{nh^5}\,} \Big).$ We write this integral as the sum of two terms, a mean zero probabilistic part $\int_0^{t}\nabla G(d^2  f(\X_{x_0}(s)))[d^2 \hat f(\X_{x_0}(s))-\mathbb{E}d^2 \hat f(\X_{x_0}(s))]ds$ and a term caused by the bias, $\int_0^{t}\nabla G(d^2  f(\X_{x_0}(s)))[\mathbb{E}d^2 \hat f(\X_{x_0}(s))-d^2  f(\X_{x_0}(s))]ds$. We will discuss the uniform convergence rate for each of two terms. \\

As for the bias term, recall that we have assumed that all the partial derivatives of $f$ up to fourth order are bounded and continuous.  Then we have with ${\mathcal {Q}_{\delta}}$ from (\ref{Q-def})
{\allowdisplaybreaks
\begin{align*}
\sup_{x_0\in \mathcal{G}\atop t\in[0,T_{x_0}]}&\bigg\|\mathbb{E}\bigg[\int_0^t \nabla G(d^2  f(\X_{x_0}(s)))d^2 (\hat f - f)  (\X_{x_0}(s))ds\bigg]\bigg\|\\
%
%
%
&\hspace*{1cm}\leq \sup_{x \in Q_\delta}\|\nabla G(x)\| T_{\mathcal{G}}\sup_{x\in\mathbb{R}^2}\|\mathbb{E}(d^2 (\hat f - f)  (x))\|= O(h^2),
%
%
%
%
\end{align*}
where the order $O(h^2)$ of the bias of the kernel estimator of the second derivatives of the density follows by standard arguments. Under the assumption that $nh^9\rightarrow\beta\geq0$, we have
\begin{align}\label{S5SEC2}
\sup_{x_0\in \mathcal{G}\atop t\in[0,T_{x_0}]}\bigg\|\mathbb{E}\bigg[\int_0^t \nabla G(d^2  f(\X_{x_0}(s)))d^2 (\hat f - f)  (\X_{x_0}(s))ds\bigg]\bigg\|=O\bigg(\frac{1}{\sqrt{nh^5}}\bigg).
\end{align}
%
To complete the proof we now consider the mean zero stochastic part and show that for $j=1,2$ with $\overline{d^2} \hat f(\X_{x_0}(s)) =  d^2 \hat f(\X_{x_0}(s))-\mathbb{E}d^2 \hat f(\X_{x_0}(s))$
\begin{align}\label{ConvRateGine1}
&\sup_{x_0\in \mathcal{G} \atop t\in[0,T_{x_0}]}\bigg\|\int_0^{t}\nabla G_j(d^2  f(\X_{x_0}(s)))\overline{d^2} \hat f(\X_{x_0}(s))ds\bigg\|=O_p\bigg(\sqrt{\frac{\log{n}}{nh^5}}\bigg).
\end{align}
%
Let $K_1=K^{(2,0)}$, $K_2=K^{(1,1)}$, $K_3=K^{(0,2)}$ and write
\begin{align}\label{def-varpi}
\omega_j(x;x_0,t)& :=\int_0^{t}\nabla G_j(d^2  f(\X_{x_0}(s)))d^2  K\Big(\frac{\X_{x_0}(s)-x}{h}\Big)ds 
%
%
= \sum_{\ell = 1}^3 \omega_{j,\ell}(x;x_0,t),
\end{align}
where $\omega_{j,\ell}(x;x_0,t) = \int_0^{t}\frac{\partial G_j}{\partial x_\ell}\big(d^2  f(\X_{x_0}(s))\big)  K_\ell {\textstyle \big(\frac{\X_{x_0}(s)-x}{h}\big)}ds.$ The dependence of the functions $\omega_{j,\ell}$ on $h$ (and thus on $n$) is suppressed in the notation. It suffices to show that for $j=1,2$ and $\ell=1,2,3$,\\[-20pt]
%
\begin{align}\label{GSeperate}
\sup_{x_0\in \mathcal{G}, t\in[0,T_{x_0}]}\Big|\frac{1}{nh^4}\sum_{i=1}^n\Big[\omega_{j,\ell}(X_i;x_0,t)-\mathbb{E}\omega_{j,\ell}(X_i;x_0,t)\Big]\Big|=O_p\Big(\sqrt{\frac{\log{n}}{nh^5}}\Big).
\end{align}
In order to see this we will use some empirical process theory. Related results can also be found in Gin\'{e} et al. (2002) and Einmahl et al. (2005). Consider the classes of functions
\begin{align*}
\mathcal {F}_{j,\ell}=\big\{\omega_{j,\ell}(\cdot;x_0,t): x_0\in \mathcal{G}, t\in[0,T_{x_0}]\big\},\;j = 1,2,\;\;\ell = 1,2,3.
\end{align*}
Again note that the classes $\mathcal {F}_{j,\ell}$ depend on $n$ through $h$. Let $Q$ denote a probability distribution on ${\mathbb R}^2$. For a class of (measurable) functions $\mathcal{F}$ and $\tau >0$, let $N_{2,Q}(\mathcal{F},\epsilon)$ be the smallest number of $L_2(Q)$-balls of radius $\tau$ needed to cover $\mathcal{F}$. We call $N_{2,Q}(\mathcal{F},\epsilon)$ the covering number of ${\cal F}_{j,\ell}$ with respect to the $L_{2}(Q)$-distance. We now show that for some constants $A,v > 0$ (not depending on $n$), we have
\begin{align}\label{VCdef}
\sup_{Q} N_{2,Q}({\mathcal F}_{j,\ell},\tau) \le \bigg(\frac{A}{\tau}\bigg)^v\quad j=1,2,\;\;\ell = 1,2,3.
\end{align}
Empirical process theory then will imply (\ref{GSeperate}) once we have found an appropriate uniform bound for the variance of the random variables $\omega_{j,\ell}(X_i;x_0,t)$.  Property (\ref{VCdef}) follows from
\begin{align}\label{VCDef}
N_{\infty}\big(\mathcal{F}_{j,\ell}, \tau\big)\leq \bigg(\frac{A}{\tau}\bigg)^v\quad j=1,2,\;\;\ell = 1,2,3,
\end{align}
where $N_{\infty}\big(\mathcal{F}_{j,\ell}, \tau\big)$ denotes the covering number of ${\cal F}_{j,\ell}$ with respect to the supremum distance $d_\infty(f_1,f_2) = \sup_{x \in {\mathbb R}^2}|f_1(x) - f_2(x)|$.
Property (\ref{VCdef}) follows from (\ref{VCDef}), because for any probability measure $Q$ we trivially have $d_{2,Q}(f_1,f_2) = \big(\int |f_1 - f_2|^2 dQ\big)^{1/2} \le \sup|f_1 - f_2|,$ which implies that $\sup_{Q} N_{2,Q}({\mathcal F}_{j,\ell},\tau) \le N_{\infty}\big(\mathcal{F}_{j,\ell}, \tau\big).$ The proof of (\ref{VCDef}) can be found in the supplementary material (Qiao and Polonik, 2015b).\\[5pt]
Now that we have control over the covering numbers of the classes ${\cal F}_{j,\ell}$, all we need to apply standard results from empirical process theory is an upper bound for the maximum variance of the $\omega_{j,\ell}(X_i;x_0,t).$ We have with $K_{\ell,x_0,h}(s,x) = K_\ell\big(\frac{\X_{x_0}(s)-x}{h}\big)$ that
\begin{align*}
&{\mathbb E}\omega^2_{j,\ell}(X_i;x_0,t) \le  \int \Big[\int_0^{t} \widetilde G_j(\X_{x_0}(s)) K_{\ell,x_0,h}(s,x)ds\Big]^2 dF(x)\\
& = \int \int_0^t \int_0^t \Big[ \widetilde G_j(\X_{x_0}(s)) K_{\ell,x_0,h}(s,x)\Big]\Big[\ \widetilde G_j(\X_{x_0}(s^\prime))  K_{\ell,x_0,h}(s^\prime,x)\Big] ds ds^\prime\;dF(x)\\
& \le \int_0^t \int_0^t \big| \widetilde G_j(\X_{x_0}(s)) \widetilde G_j(\X_{x_0}(s^\prime))\big| \int  K_{\ell,x_0,h}(s,x) K_{\ell,x_0,h}(s^\prime,x)\;dF(x)\;ds ds^\prime.
\end{align*}
Now recall that the kernel $K$ is assumed to have support inside the unit ball, and notice that ${\bf 1}\big(\|\X_{x_0}(s)-x\| \le h \big)\cdot  {\bf 1}\big(\|\X_{x_0}(s^\prime)-x\| \le h \big) \le {\bf 1}\big( \|\X_{x_0}(s) - \X_{x_0}(s^\prime)\| \le 2h\big) \le {\bf 1}\big( |s - s^\prime| \le c\,h\big)$ for some $c > 0$ with $c$ not depending on $x_0, s$ or $s^\prime$. Therefore we can estimate the last integral above by
\begin{align*}
C \int_0^t \int_0^t {\bf 1}\big( |s - s^\prime| \le c\,h\big) \int {\bf 1}\big( \|\X_{x_0}(s) - x\| \le h\big) dF(x)\, dsds^\prime \le C^\prime h^3
\end{align*}
where $C^\prime = \sup_{u}|\widetilde G_j(u)|^2 M_\ell^2 $ and $C>0$ sufficiently large.  Notice here that $0 \le t \le T_{x_0} \le T_{\mathcal{G}} < \infty.$ Thus we can uniformly bound the variances of the sum in (\ref{GSeperate}) by $\sigma^2 = O\big(n\cdot \big(\frac{1}{nh^4}\big)^2 h^3\big) = O\big(\frac{1}{nh^5}\big)$. \\[8pt]
We now have control over the covering numbers of  the classes ${\cal F}_{j,\ell}$ along with the variances of the sums involved. It is known from empirical process theory (e.g. see van der Vaart and Wellner (1996), Theorem 2.14.1) that empirical processes indexed by (uniformly bounded) classes of functions satisfying (\ref{VCDef}) (even if the function classes depend on $n$) behave like $O_p(\sqrt{\sigma^2 \log 1/\sigma^2})$. Plugging in our bound for $\sigma$ gives the assertion of (\ref{GSeperate}).\\[-4pt]

The asserted uniform convergence rate of the difference $\hat \X_{x_0}(t)-\X_{x_0}(t)$ in (\ref{UniformPath1}) now follows from (\ref{S3SEC2}), (\ref{S4SEC2}), (\ref{S5SEC2}) and (\ref{ConvRateGine1}). \hfill$\square$\\[-4pt]

Below we will repeatedly apply Theorem~\ref{UniformPath} with $T_{x_0} = \theta_{x_0}$, and so we need to know that $x_0 \to \theta_{x_0}$ is continuous.  This in fact follows from the fact that we can interpret  $\theta_{x_0}$ as indexed by the integral curve itself, and that we use the fact that by our assumptions, integral curves are dense and non-overlapping (a formal proof can be found in the supplemental material (Qiao and Polonik, 2015b).

\subsection{A rate of convergence for $\boldsymbol{\sup_{x_0 \in \mathcal{G}}\big|\hat\theta_{x_0} - \theta_{x_0}\big|}$}

A rate of  convergence for $\sup_{x_0 \in \mathcal{G}}\big| \hat\theta_{x_0} - \theta_{x_0}\big|$ is given here that is needed in the proofs below. Recall that $\hat \theta_{x_0} = \argmin_t\{|t|: \; t\in \Theta_{x_0}\}$ if $\wh \Theta_{x_0}\neq \emptyset$ (see (\ref{def-thetahat})), where $\wh \Theta_{x_0}=\{t: \blangle\nabla \hat f(\hat \X_{x_0}(t)), \hat V(\hat \X_{x_0}(t))\brangle = 0, \;\;\hat\lambda_2(\hat \X_{x_0}(t))<0\}$. The proof of the following result also implies that $\wh \Theta_{x_0}$ is non-empty and that $\hat \theta_{x_0}$ is unique for all $x_0\in\mathcal{G}$ with high probability for large $n$. 
%
%
We will thus only consider the case of $\wh \Theta_{x_0}\neq \emptyset$ and unique $\hat \theta_{x_0}$ in what follows. 
\begin{proposition}\label{ThetaRate}
Under assumptions (\textbf{F}1)--(\textbf{F}6), (\textbf{K}1)--(\textbf{K}2) and (\textbf{H}1), we have with  $\alpha_n=\sqrt{\frac{\log{n}}{nh^6}}$
\begin{align*}
\sup_{x_0\in\mathcal{G}}|\hat \theta_{x_0}-\theta_{x_0}|=O_p(\alpha_n),
\end{align*}
If, in addition, $\sup\limits_{x_0\in\mathcal{G}}\|\nabla f(\X_{x_0}(\theta_{x_0}))\|=0$, then we can choose $\alpha_n=\sqrt{\frac{\log{n}}{nh^5}}$.
\end{proposition}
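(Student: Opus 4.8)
The plan is to realize $\theta_{x_0}$ and $\hat\theta_{x_0}$ as the smallest-modulus sign changes of the scalar functions
$$a_{x_0}(t)=\blangle\nabla f(\X_{x_0}(t)),V(\X_{x_0}(t))\brangle\qquad\text{and}\qquad \hat a_{x_0}(t)=\blangle\nabla\hat f(\hat\X_{x_0}(t)),\hat V(\hat\X_{x_0}(t))\brangle$$
(subject to $\lambda_2<0$, resp. $\hat\lambda_2<0$), and to run a transversality argument: once $\hat a_{x_0}$ is shown to be uniformly (in $x_0\in\mathcal{G}$) close to $a_{x_0}$ in a neighbourhood of $\theta_{x_0}$, and $a_{x_0}$ is known to cross zero with derivative bounded away from $0$, the zero of $\hat a_{x_0}$ near $\theta_{x_0}$ lies within $O_p(\text{approximation error})$ of $\theta_{x_0}$; a separate compactness argument, using $(\textbf{F}6)$, rules out any zero of $\hat a_{x_0}$ with $\hat\lambda_2<0$ of smaller modulus. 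Along the way this also yields $\wh\Theta_{x_0}\neq\emptyset$ and uniqueness of $\hat\theta_{x_0}$ with probability tending to one.

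Step 1 (uniform control of $\hat a_{x_0}-a_{x_0}$). By $(\textbf{F}3)$--$(\textbf{F}4)$ the curves $\{\X_{x_0}(t):|t-\theta_{x_0}|\le a^*\}$ stay in $\mathcal{G}\subset\mathcal{H}$, and by the reformulation of $(\textbf{F}4)$ given in the discussion of the assumptions, condition (\ref{Gammax1}) of Theorem~\ref{UniformPath} holds on this range with $T_{x_0}^{min}=a^*-\theta_{x_0}$ and $T_{x_0}^{max}=a^*+\theta_{x_0}$ (continuous in $x_0$ since $x_0\mapsto\theta_{x_0}$ is). Hence $\sup_{x_0\in\mathcal{G}}\sup_{|t-\theta_{x_0}|\le a^*}\|\hat\X_{x_0}(t)-\X_{x_0}(t)\|=O_p(\sqrt{\log n/(nh^5)})$. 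Writing $\hat a_{x_0}(t)-a_{x_0}(t)$ as a telescoping sum obtained by successively replacing $\nabla\hat f$ by $\nabla f$, then $\hat V$ by $V$, then $\hat\X_{x_0}(t)$ by $\X_{x_0}(t)$, and using (i) the standard uniform rates $\sup_x\|\nabla\hat f-\nabla f\|=O_p(\sqrt{\log n/(nh^4)})+O(h^2)$ and $\sup_x\|d^2\hat f-d^2 f\|_F=O_p(\sqrt{\log n/(nh^6)})+O(h^2)$, (ii) the local Lipschitz property of $G$ on $\mathcal{Q}_\delta$ from (\ref{Q-def}) (which transfers to $\hat V=G(d^2\hat f)$ once $d^2\hat f$ is close to $d^2 f$), and (iii) boundedness of $\nabla f$, $V$ and of the derivatives of $G$, the leading term is $\|\nabla f\|_\infty\sup_x\|\hat V-V\|=O_p(\sqrt{\log n/(nh^6)})$, with all bias contributions $o(\alpha_n)$ under $(\textbf{H}1)$ (since $nh^9\to\beta$); thus $\sup_{x_0\in\mathcal{G}}\sup_{|t-\theta_{x_0}|\le a^*}|\hat a_{x_0}(t)-a_{x_0}(t)|=O_p(\alpha_n)$, $\alpha_n=\sqrt{\log n/(nh^6)}$.

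Step 2 (transversality, sign change, and no closer zero). By $(\textbf{F}5)$ and the identity $\tfrac{d}{dt}a_{x_0}(\theta_{x_0})=\tilde a^\prime(\X_{x_0}(\theta_{x_0}))$, compactness of $\mathcal{L}$ and continuity give $c_0>0$ and $\epsilon_1\in(0,a^*]$ with $|a_{x_0}^\prime(t)|\ge c_0$ for all $x_0\in\mathcal{G}$, $|t-\theta_{x_0}|\le\epsilon_1$; hence $|a_{x_0}(t)|\ge c_0|t-\theta_{x_0}|$ there and $a_{x_0}$ changes sign at $\theta_{x_0}$. Choosing $\delta_n=C\alpha_n$ with $C$ large enough that the Step-1 bound is $\le\frac{c_0}{2}\delta_n$ uniformly with probability $\to1$, $\hat a_{x_0}$ has opposite signs at $\theta_{x_0}\pm\delta_n$ and thus a zero in $(\theta_{x_0}-\delta_n,\theta_{x_0}+\delta_n)$; as $\lambda_2(\X_{x_0}(\theta_{x_0}))<0$ and $\hat\lambda_2(\hat\X_{x_0}(\cdot))\to\lambda_2(\X_{x_0}(\cdot))$ uniformly, $\hat\lambda_2<0$ at this zero, so $\wh\Theta_{x_0}\neq\emptyset$. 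To prevent $\hat\theta_{x_0}$ from jumping to a zero of smaller modulus, observe that on the compact set $\mathcal{A}=\{(x_0,t):x_0\in\mathcal{G},\ \X_{x_0}(t)\in\mathcal{H},\ |t|\le|\theta_{x_0}|,\ |t-\theta_{x_0}|\ge\epsilon_1\}$ the continuous function $|a_{x_0}(t)|+(\lambda_2(\X_{x_0}(t)))^+$ is $\ge\rho$ for some $\rho>0$: at a hypothetical zero we would have $a_{x_0}(t)=0$ and $\lambda_2(\X_{x_0}(t))\le0$, the case $\lambda_2<0$ contradicting the minimality (or, for $|t|=|\theta_{x_0}|$, the uniqueness required in $(\textbf{F}3)$) and the case $\lambda_2=0$ contradicting $(\textbf{F}6)$. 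Since $\sup_{x_0,t}|\hat a_{x_0}(t)-a_{x_0}(t)|=o_p(1)$ and $\sup_{x_0,t}|\hat\lambda_2(\hat\X_{x_0}(t))-\lambda_2(\X_{x_0}(t))|=o_p(1)$ over the relevant range, with probability $\to1$ we have $\wh\Theta_{x_0}\cap\mathcal{A}=\emptyset$ for all $x_0\in\mathcal{G}$. Combined with the previous sentence this gives $\hat\theta_{x_0}\in(\theta_{x_0}-\delta_n,\theta_{x_0}+\delta_n)$ uniformly, i.e. $\sup_{x_0\in\mathcal{G}}|\hat\theta_{x_0}-\theta_{x_0}|=O_p(\alpha_n)$; uniqueness of $\hat\theta_{x_0}$ follows because $\sup_{x_0}|\hat a_{x_0}^\prime-a_{x_0}^\prime|=o_p(1)$ (even though $\hat a_{x_0}^\prime$ involves third derivatives of $\hat f$, these are uniformly consistent under $(\textbf{H}1)$), so $\hat a_{x_0}$ is strictly monotone on $[\theta_{x_0}-\epsilon_1,\theta_{x_0}+\epsilon_1]$.

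Step 3 (the case $\nabla f\equiv0$ on $\mathcal{L}$, and the main obstacle). When $\sup_{x_0\in\mathcal{G}}\|\nabla f(\X_{x_0}(\theta_{x_0}))\|=0$ one first uses the rate just obtained to restrict attention to $|t-\theta_{x_0}|\le C\alpha_n$, then re-examines the telescoping bound there: $\|\nabla f(\hat\X_{x_0}(t))\|\le L|t-\theta_{x_0}|+O_p(\sqrt{\log n/(nh^5)})=O_p(\sqrt{\log n/(nh^6)})$, so the previously dominant term $\blangle\nabla f(\hat\X_{x_0}(t)),\hat V(\hat\X_{x_0}(t))-V(\hat\X_{x_0}(t))\brangle$ drops to $O_p(\log n/(nh^6))=o_p(\sqrt{\log n/(nh^5)})$ (using $nh^7\to\infty$), while the contribution of $\hat\X_{x_0}-\X_{x_0}$ now dominates at order $\sqrt{\log n/(nh^5)}$; repeating Step 2 with $\delta_n=C^\prime\sqrt{\log n/(nh^5)}$ yields the improved rate. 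I expect the main obstacle to be the error bookkeeping in Steps 1 and 3 — correctly identifying which of curve estimation ($\sqrt{\log n/(nh^5)}$), first-derivative estimation ($\sqrt{\log n/(nh^4)}$), second-derivative estimation ($\sqrt{\log n/(nh^6)}$) and the $O(h^2)$ biases dominates, and in particular realizing that the sharper rate in the vanishing-gradient case becomes available only after localizing to $|t-\theta_{x_0}|=O(\alpha_n)$; a secondary technical point is the compactness argument of Step 2, where $(\textbf{F}6)$ is exactly the hypothesis that makes the `first-hitting' selection behave uniformly over $\mathcal{G}$.
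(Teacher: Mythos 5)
Your proposal is correct and lands on the same error bookkeeping as the paper, but reaches $\hat\theta_{x_0}$ by a slightly different route. The paper first proves a separate uniform consistency result $\sup_{x_0}|\hat\theta_{x_0}-\theta_{x_0}|=o_p(1)$ (Proposition A.1 in the supplement, which contains the compactness argument involving $(\textbf{F}6)$ and a quantity $\rho_{x_0}$ marking where the trajectory leaves $\mathcal{H}$), and then obtains the rate by a one-term Taylor expansion $0=\hat a_{x_0}(\hat\theta_{x_0})=\hat a_{x_0}(\theta_{x_0})+\hat a_{x_0}^\prime(\hat\xi_{x_0})(\hat\theta_{x_0}-\theta_{x_0})$, showing $\inf_{x_0}|\hat a_{x_0}^\prime(\hat\xi_{x_0})|\ge\eta$ w.h.p.\ (which needs the consistency step plus uniform control of $\hat a_{x_0}^\prime-a_{x_0}^\prime$, hence third-derivative kernel rates) and $\sup_{x_0}|\hat a_{x_0}(\theta_{x_0})|=O_p(\alpha_n)$ via exactly the telescoping and the rates (\ref{rate1})--(\ref{rate2}) that you describe. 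You instead run a sign-change/IVT argument: $a_{x_0}$ crosses zero at $\theta_{x_0}$ with slope bounded below, $\hat a_{x_0}-a_{x_0}$ is uniformly $O_p(\alpha_n)$ nearby, so $\hat a_{x_0}$ must vanish within $O_p(\alpha_n)$ of $\theta_{x_0}$, and a compactness argument on $\{(x_0,t):|t|\le|\theta_{x_0}|,\,|t-\theta_{x_0}|\ge\epsilon_1\}$ using $(\textbf{F}6)$ excludes a smaller-modulus zero. This folds consistency and rate into one step, at the cost of having to control $\hat a_{x_0}-a_{x_0}$ over an interval rather than at the single time $\theta_{x_0}$; the Taylor route the paper takes is slightly leaner on this point but pays instead for the separate consistency lemma and the control of $\hat a_{x_0}^\prime$. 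In the vanishing-gradient case you correctly localize to $|t-\theta_{x_0}|=O_p(\sqrt{\log n/(nh^6)})$ first and then notice that after localization $\|\nabla f(\hat\X_{x_0}(t))\|$ is small enough that the $\hat V-V$ term drops below $\sqrt{\log n/(nh^5)}$, leaving the curve-estimation error as the dominant contribution; the paper instead evaluates directly at $t=\theta_{x_0}$ where $\nabla f$ vanishes exactly, so that term disappears outright, which is a touch cleaner.

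Two small points worth tightening in your write-up. First, your set $\mathcal{A}$ is parametrized by the true trajectory $\X_{x_0}(t)\in\mathcal{H}$, but the zeros being ruled out are zeros of $\hat a_{x_0}(t)=\blangle\nabla\hat f(\hat\X_{x_0}(t)),\hat V(\hat\X_{x_0}(t))\brangle$, and the estimated trajectory can exit $\mathcal{H}$ at times $t$ where the true one is still inside or vice versa; the paper's introduction of $\rho_{x_0}$ (the first backward exit time of the true trajectory) is exactly the device that makes the first-hitting selection behave uniformly, and you should say a word about why the discrepancy between $\X_{x_0}$ and $\hat\X_{x_0}$ (controlled uniformly by Theorem~\ref{UniformPath}) does not let a spurious zero sneak in near the boundary of $\mathcal{H}$. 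Second, your $\mathcal{A}$ covers $|t|\le|\theta_{x_0}|$, but a zero with $|\theta_{x_0}|<|t|<|\theta_{x_0}|+\delta_n$ and $t$ near $-\theta_{x_0}$ (when $\theta_{x_0}>0$) also needs to be excluded for the argmin to pick the zero you constructed; extending the compact set by a fixed margin (or arguing that $|\theta_{x_0}|+\delta_n$-balls away from $\theta_{x_0}$ are eventually inside $\mathcal{A}$) closes this. Neither point affects the rate, and the bookkeeping in Steps~1 and~3 is right.
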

In addition to Theorem~\ref{UniformPath}, Proposition~\ref{ThetaRate} is an important igredient to the proofs of Theorem~\ref{Approx-FilaDiff} and Lemma~\ref{Theta}. The proofs of Proposition~\ref{ThetaRate}, Theorem~\ref{Approx-FilaDiff} and Lemma~\ref{Theta} as well as Corollaries~\ref{distr-filament-est1} -- \ref{filament-est3} can be found in the supplementary material (Qiao and Polonik, 2015b).

\subsection{Proof of Theorem~\ref{ConfBand}}

Similar to Bickel and Rosenblatt (1973) and Rosenblatt (1976), the proof of Theorem~\ref{ConfBand} consists in using a strong approximation by (nonstationary) Gaussian processes indexed by manifolds. This approximation, and in particular the indexing manifold itself, depend on the bandwidth $h$. In fact, the indexing manifold is growing when $h \to 0$. In a companion paper, Qiao and Polonik (2015a) derive the  extreme value behavior of Gaussian processes in such scenarios, and we will apply their result here. Below we state a special case of this result for convenience.\\[8pt]
Before stating the result on the extreme value behavior of Gaussian fields indexed by growing manifolds that has been mentioned above, we need a definition that extends the notion of local $D_t$-stationarity that is known from the literature, e.g., Mikhavela and Piterbarg (1996). Since we are dealing with growing indexing manifolds (as $h \to 0$), we need the local $D_t$-stationarity to hold uniformly over the bandwidth $h$. The following definition makes this uniformity precise:
\begin{definition}[Local equi-$D_t$-stationarity] Let $X_h(t),t\in \mathcal {S}_h\subset\mathbb{R}^2$ be a sequence of nonhomogeneous random fields indexed by $h\in \mathbb{H}$ where $\mathbb{H}$ is an index set. We say $X_h(t)$ has a local equi-$D_t$-stationary structure, or $X_h(t)$ is locally equi-$D_t^h$-stationary, if for any $\epsilon>0$ there exists a positive $\delta(\epsilon)$ independent of $h$ such that for any $s\in \mathcal {S}_h$ one can find a non-degenerate matrix $D_s^h$ such that the covariance function $r_h(t_1,t_2)$ of $X_h(t)$ satisfies
\begin{align*}
1-(1+\epsilon)\|D_s^h(t_1-t_2)\|^2\leq r_h(t_1,t_2)\leq1-(1-\epsilon)\|D_s^h(t_1-t_2)\|^2
\end{align*}
provided $\|t_1-s\|<\delta(\epsilon)$ and $\|t_2-s\|<\delta(\epsilon).$ 
\end{definition}
%
%
The following result generalizes Theorem~1 in Piterbarg and Stamatovich (2001) and Theorem~A1 in Bickel and Rosenblatt (1973).
\begin{theorem}[Qiao and Polonik, 2015a]\label{ProbMain}
Let $\mathcal{H}_1\subset\mathbb{R}^2$ be a compact set and $\mathcal{H}_h:=\{t:ht\in\mathcal{H}_1\}$ for $0<h\leq1$. Let $X_h(t), t\in \mathcal{H}_h, 0<h\leq1$ be a class of Gaussian centered locally equi-$D_t^h$-stationary fields with matrix $D_t^h$ continuous in $h\in (0, 1]$ and $t\in \mathcal{H}_h$. Let $\mathcal {M}_1\subset\mathcal{H}_1$ be a one-dimensional compact manifold with bounded curvature and $\mathcal {M}_h:=\{t:ht\in\mathcal{M}_1\}$ for $0<h\leq1$. Suppose $\lim_{h\rightarrow0,ht=t^*}D_t^h=D_{t^*}^0$ uniformly in $t^*\in \mathcal{H}_1$, where all the components of $D_{t^*}^0$ are continuous and bounded in $t^*\in \mathcal{H}_1$. Further assume there exists a positive constant $C$ such that
\begin{align}\label{Lambda2C}
\inf_{0<h\leq 1, hs\in \mathcal{H}_1}\lambda_2(\{D_s^h\}^T D_s^h)\geq C,
\end{align}
where $\lambda_2(\cdot)$ is the second eigenvalue of the matrix. Suppose for any $\delta>0$, there exists a positive number $\eta$ such that  the covariance function $r_h$ of $X_h$ satisfies
\begin{align}\label{SupGauss1}
\sup_{0<h\leq 1}\{|r_h(x+y,x)|: x+y\in\mathcal {M}_h, x\in\mathcal {M}_h, \|y\|>\delta\} < \eta<1.
\end{align}
In addition, assume that there exists a $\tilde\delta>0$ such that
\begin{align}\label{SupGauss2}
\sup_{0<h\leq 1}\{|r_h(x+y,x)|: x+y\in\mathcal {M}_h, x\in\mathcal {M}_h, \|y\|>\tilde\delta\} =0.
\end{align}
For any fixed $z$, define
\begin{align*}
\theta \equiv \theta(z) =\sqrt{2\log{h^{-1}}}+\frac{1}{\sqrt{2\log{h^{-1}}}}\bigg[z+\log\bigg\{\frac{1}{\sqrt{2}\,\pi}\int_{\mathcal {M}_1}\|D_s^0 M_s^1\|ds\bigg\}\bigg],
\end{align*}
where $M_s^1$ is the unit vector denoting the tangent direction of $\mathcal {M}_1$ at $s$. Then
\begin{align*}
\lim_{h\rightarrow0}P\Big\{\sup_{t\in\mathcal {M}_h}|X_h(t)|\leq \theta \Big\}=\exp\{-2\exp\{-z\}\}.
\end{align*}
\end{theorem}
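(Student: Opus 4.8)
The plan is to treat this as a Gaussian extreme-value problem and run the Pickands--Piterbarg double-sum/Poisson method, but carried out uniformly in $h\in(0,1]$ so that it accommodates the index manifold $\mathcal{M}_h$, whose length is of order $h^{-1}$; conceptually it fuses the manifold-indexed set-up of Piterbarg and Stamatovich (2001) with the ``rescale and let the domain grow'' device of Bickel and Rosenblatt (1973). Fix $z$, put $c := \log\big\{\tfrac{1}{\sqrt 2\,\pi}\int_{\mathcal{M}_1}\|D_s^0 M_s^1\|\,ds\big\}$ and $u := \theta(z) = \sqrt{2\log h^{-1}} + (z+c)/\sqrt{2\log h^{-1}}$, so that $u^2/2 = \log h^{-1} + z + c + o(1)$ and hence $e^{-u^2/2} = h\,e^{-(z+c)}(1+o(1))$. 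It suffices to show that the number $N_h$ of clusters of excursions of $|X_h|$ above $u$ over $\mathcal{M}_h$ converges in distribution to a Poisson random variable with mean $2e^{-z}$, for then $P(\sup_{t\in\mathcal{M}_h}|X_h(t)|\le u) = P(N_h=0)\to e^{-2e^{-z}}$.

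First I would discretize: parametrize $\mathcal{M}_1$ by arc length and $\mathcal{M}_h$ correspondingly, cover $\mathcal{M}_h$ by $\asymp h^{-1}$ consecutive arcs $I_j$ of a fixed length $\ell$, and on each arc replace $\sup_{I_j}$ by a maximum over a grid of mesh $\rho_h\to 0$; a Borell--TIS type maximal inequality together with the local equi-$D_t^h$-stationarity (which gives a uniform-in-$h$ modulus of continuity for $X_h$) makes the discretization error at level $u$ negligible. On a single arc $I_j$ with rescaled midpoint $s_j := ht_j\in\mathcal{M}_1$, I would use the bounded curvature of $\mathcal{M}_1$ to flatten $\mathcal{M}_h$ locally to a segment and approximate $X_h$ on $I_j$ by a stationary Gaussian field with covariance $\approx 1-\|D_{s_j}^h(\cdot)\|^2$; restricted to the flattened arc this is a mean-square differentiable stationary process with second spectral moment $2\|D_{s_j}^h M_{s_j}^1\|^2$, and Pickands' lemma in the regular case $\alpha=2$ (Pickands constant $H_2=\pi^{-1/2}$) gives
\[
P\Big(\max_{t\in I_j}X_h(t) > u\Big) = \ell\,\|D_{s_j}^h M_{s_j}^1\|\,\tfrac{1}{\sqrt 2\,\pi}\,e^{-u^2/2}\,(1+o(1)),
\]
with the $o(1)$ uniform in $j$ and $h$ --- this is exactly where the equi-stationarity, the continuity/convergence of $D_t^h$ in $(h,t)$, and the nondegeneracy bound (\ref{Lambda2C}) (so the local process is genuinely smooth and nondegenerate with $h$-free constants) come in.

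Next I would Poissonize. Summing the arc exceedance probabilities over the $\asymp h^{-1}$ arcs and accounting for the two levels $\pm u$ gives a mean
\[
\mu_h := 2\,\tfrac{1}{\sqrt 2\,\pi}\,e^{-u^2/2}\sum_j \ell\,\|D_{s_j}^h M_{s_j}^1\|,
\]
and since $\sum_j (h\ell)\,\|D_{s_j}^h M_{s_j}^1\|$ is a Riemann sum for $\int_{\mathcal{M}_1}\|D_s^0 M_s^1\|\,ds$ (using $D_s^h\to D_s^0$ uniformly and the continuity of $D_s^0$) while the left-over factor $h^{-1}$ reflects $\mathcal{M}_h = h^{-1}\mathcal{M}_1$, we obtain $\mu_h = \tfrac{2}{\sqrt 2\,\pi}\,e^{-u^2/2}\,h^{-1}\int_{\mathcal{M}_1}\|D_s^0 M_s^1\|\,ds\,(1+o(1))$; plugging in $e^{-u^2/2} = h\,e^{-(z+c)}(1+o(1))$ and the definition of $c$ yields $\mu_h\to 2e^{-z}$. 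To pass from ``expected number of high arcs $\to 2e^{-z}$'' to ``$N_h\to$ Poisson$(2e^{-z})$'' I would show that high excursions on well-separated arcs are asymptotically independent: the normal comparison (Berman/Slepian) lemma together with condition (\ref{SupGauss1}) --- which bounds $|r_h|$ away from $1$ on macroscopic scales uniformly in $h$ --- makes the joint exceedance probabilities factor up to negligible error, while (\ref{SupGauss2}) gives exact independence of arcs more than $\tilde\delta$ apart; a standard second-moment (or Chen--Stein) argument then yields the Poisson limit and also shows that multiple clusters within a single short arc are negligible.

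I expect the main obstacle to be executing every estimate uniformly in $h\in(0,1]$ while $\mathcal{M}_h$ grows without bound: the local Pickands approximation, the discretization bound, and above all the normal-comparison step must all be carried out with constants independent of $h$, which is precisely what the notion of local equi-$D_t^h$-stationarity, the continuity and convergence $D_t^h\to D_t^0$, and the uniform-in-$h$ hypotheses (\ref{Lambda2C}), (\ref{SupGauss1}), (\ref{SupGauss2}) are designed to supply. A secondary difficulty is geometric: $\mathcal{M}_h$ is a curve of bounded curvature embedded in $\mathbb{R}^2$ rather than a rectangle, and the local metric $D_s^h$ is anisotropic and varies along the curve, so the local analysis must flatten the curve and keep track only of the tangential component $\|D_s^h M_s^1\|$ --- which is what ultimately produces the ``effective length'' $\int_{\mathcal{M}_1}\|D_s^0 M_s^1\|\,ds$ appearing in the centering.
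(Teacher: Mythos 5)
The paper does not actually prove Theorem~\ref{ProbMain}: it is imported, with attribution, from the companion paper Qiao and Polonik (2015a), and the text around it only says that it generalizes Theorem~1 of Piterbarg and Stamatovich (2001) and Theorem~A1 of Bickel and Rosenblatt (1973). So there is no in-paper proof to compare you against; what I can do is assess your sketch on its merits and against that stated lineage.

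Your outline is consistent with that lineage and the key quantitative steps check out. The normalizing algebra $u^2/2=\log h^{-1}+z+c+o(1)$, hence $e^{-u^2/2}=he^{-(z+c)}(1+o(1))$, is correct. For the local exceedance rate, parametrizing by arc length and Taylor-expanding the covariance via local equi-$D^h_t$-stationarity gives $r_h\approx 1-\|D^h_{s_j}M^1_{s_j}\|^2\,|t_1-t_2|^2$, i.e.\ $-r''(0)=2\|D^h_{s_j}M^1_{s_j}\|^2$, and the Rice/Pickands $\alpha=2$ asymptotics (with $H_2=\pi^{-1/2}$) then deliver exactly $P(\max_{I_j}X_h>u)\sim \ell\,\|D^h_{s_j}M^1_{s_j}\|\,(\sqrt{2}\pi)^{-1}e^{-u^2/2}$. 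Your Riemann-sum step $\sum_j h\ell\,\|D^h_{s_j}M^1_{s_j}\|\to\int_{\mathcal{M}_1}\|D^0_sM^1_s\|\,ds$ uses exactly the assumed uniform convergence and continuity of $D^h_t$, and inserting the definition of $c$ gives $\mu_h\to 2e^{-z}$, so the limiting parameter is right. The role you assign to (\ref{Lambda2C}) (uniform nondegeneracy, hence $h$-free constants), to (\ref{SupGauss1}) (Berman/Slepian comparison on intermediate scales), and to (\ref{SupGauss2}) (exact independence at macroscopic separations) is the intended one, and the observation that bounded curvature of $\mathcal{M}_1$ makes $\mathcal{M}_h$ increasingly flat (curvature $O(h)$), so the local flattening error vanishes, is the right geometric point. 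The main caveat, which you yourself flag, is that every one of these steps must be executed with constants uniform in $h\in(0,1]$ while $|\mathcal{M}_h|\asymp h^{-1}\to\infty$; that uniformity (uniform Pickands constants, uniform discretization bounds, uniform comparison estimates) is where the genuine work of the companion paper lies, and a referee of that paper would want to see the Borell--TIS discretization, the freezing/comparison step, and the double-sum (or Chen--Stein) clustering bound written out in full rather than invoked as ``standard''. As a blind sketch of the argument, however, your plan is sound and I see no structural gap.
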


\begin{remark}
Let $\lambda_1(A)$ is the first eigenvalue of $A.$ The assumptions of $h \to D_t^h$ being a continuous matrix function in $h\in (0, 1]$ and $t\in\mathcal{H}_h,$ and $\lim_{h\rightarrow0,ht=t^*}D_t^h=D_{t^*}^0$ uniformly in $t^*\in \mathcal{H}_1,$ imply the existence of $C^\prime \le 0$ with $\sup_{0<h\leq 1, hs\in\mathcal{H}_1}\lambda_1(\{D_s^h\}^T D_s^h)\leq C^\prime$, since the first eigenvalue is a continuous function of the entries of a matrix. Now we have
\begin{align}\label{EigenvalueBound} 
0<C\leq \inf_{0<h\leq 1 \atop hs\in\mathcal{H}_1}\lambda_2(\{D_s^h\}^T D_s^h)\leq\sup_{0<h\leq 1 \atop hs\in\mathcal{H}_1}\lambda_1(\{D_s^h\}^T D_s^h)\leq C^\prime.
\end{align}
Since\\[-20pt]
\begin{align}\label{MatrixBound}
\lambda_2(\{D_s^h\}^T D_s^h)\|t_1-t_2\|^2\leq \|D_s^h(t_1-t_2)\|^2\leq\lambda_1(\{D_s^h\}^T D_s^h)\|t_1-t_2\|^2,
\end{align}
local equi-$D_t$-stationarity of $X_h(t)$ implies that
\begin{align}\label{CovLoc}
r_h(t_1,t_2)=1-\|D_s^h(t_1-t_2)\|^2+o(\|t_1-t_2\|^2)
\end{align}
uniformly for $t_1,t_2\in\mathcal{H}_h$. On the other hand, (\ref{EigenvalueBound}) and (\ref{MatrixBound}) imply
\begin{align*}
\frac{1}{C^\prime}\|D_s^h(t_1-t_2)\|^2\leq \|t_1-t_2\|^2 \leq \frac{1}{C} \|D_s^h(t_1-t_2)\|^2.
\end{align*}
Hence, (\ref{CovLoc}) also implies the local equi-$D_t^h$-stationarity of $X_h(t)$.
\end{remark}
{\bf Proof of Theorem~\ref{ConfBand}} Observe that by using Theorem~\ref{UniformXDiff} we have
\begin{align*}
\sup_{x_0 \in \mathcal{G}}\|\hat\X_{x_0}(\hat \theta_{x_0}) - \X_{x_0}(\theta_{x_0}) - \hat\varphi_{1n}(\X_{x_0}(\theta_{x_0})) V(\X_{x_0}(\theta_{x_0}))\| = O_p\Big(\frac{\log n}{nh^7}\Big).
\end{align*}
Therefore, by using $({\bf H}1)$ we see that (\ref{ConfBand1}) will follow once we have shown that
\begin{align}\label{ConfBand3}
P\bigg(\sup_{x_0\in\mathcal{G}}\Big\| \sqrt{nh^6} g(\X_{x_0}(\theta_{x_0}))\,\hat\varphi_{1n}(\X_{x_0}(\theta_{x_0})) \,V(\X_{x_0}(\theta_{x_0}))\Big\|<b_h(z)\bigg) \to e^{-2\,e^{-z}}.
\end{align}
By definition of $g(x)$ and $\hat \varphi_{1n}(x)$ (see (\ref{def-g}) and (\ref{def-a1n}), respectively) we have
\begin{align*}
\Big\| \sqrt{nh^6} g(\X_{x_0}(\theta_{x_0}))\,\hat\varphi_{1n}(\X_{x_0}(\theta_{x_0})) \,V(\X_{x_0}(\theta_{x_0}))\Big\| = \big|Y_n(\X_{x_0}(\theta_{x_0}))\big|,
\end{align*}
where\\[-20pt]
\begin{align*}
Y_n(x)=\frac{\sqrt{nh^6}}{\sqrt{f(x)\,}\, \|A(x)\|_{\mathbf R}}\left\langle A(x),d^2 \hat f(x)-\mathbb{E}d^2 \hat f(x)\right\rangle,
\end{align*}
and $A(x) \in {\mathbb R}^3$ is defined in (\ref{GtildeDef}). In other words, (\ref{ConfBand3}) can be written as
\begin{align}\label{Replace1}
\lim_{n\rightarrow\infty}&P\Big(\sup_{x\in \mathcal{L}}|Y_n(x)|<b_{h}(z)\Big)=\exp\{-2\exp\{-z\}\}.
\end{align}
Note that for any $x\in \mathcal{H}$, we have $Y_n(x)\rightarrow {\cal N}(0,1)$ in distribution as $n\rightarrow\infty$. This immediately follows from the fact that under the present assumptions $\sqrt{nh^6}(d^2\hat f(x)-\mathbb{E}d^2 \hat f(x))\rightarrow \mathscr{N}(0,f(x)\mathbf{R})$ in distribution.\\[8pt]
Now we prove (\ref{Replace1}).  In what follows we denote $\mathcal{H}_h=\{x: hx\in\mathcal{H}\}$ and $\mathcal{L}_h=\{x: hx\in\mathcal{L}\}$ for $0<h\leq 1$. For $0 < h < 1$ we also use the notation\\[-15pt]
\begin{align}\label{defahAh}
A_h(x)=\tilde{G}(hx)^T  \nabla f(hx) \in {\mathbb R}^{3}\quad\text{and}\quad
a_h(x)=\frac{1}{\|A_h(x)\|_{\mathbf R}}.
\end{align}
(Recall that $\tilde G(x) = \nabla G(d^2f(x))$ with $G$ defined in (\ref{GDef}).) Note that plugging in $h = 1$ into the definition we obtain $A(x) = (A_1(x), A_2(x), A_3(x))^T$ used above already. For ease of notation we denote $a_1(x) = a(x).$ Let $W$ be a two-dimensional Wiener process and
\begin{align}\label{GaussianU}
U_h(x)=a_h(x)\int \big(A_h(x))^T  d^2   K(x-s) dW(s).
\end{align}
%
Following similar arguments in the proof of Theorem~1 in Rosenblatt (1976) (for more details see technical supplement (Qiao and Polonik, 2015b), it suffices to prove
\begin{align}\label{sufficiency}
\lim_{n\rightarrow\infty}&P\Big(\sup_{x\in \mathcal{L}_h}|U_h(x)|<b_h(z)\Big)=\exp\{-2\exp\{-z\}\}.
\end{align}
To complete the proof, we now show that $U_h(x), x \in {\mathcal{L}}_h$ satisfies the conditions of Theorem ~\ref{ProbMain}. For any $x,y\in\mathcal{L}_h$ let $r_h(x, y) := \Cov(U_h(x), U_h(y)).$ Then obviously
\begin{align}\label{covU}
r_h(x, y)=a_h(x)a_h(y)A_h(x)^T\int_{\mathbb{R}^2}d^2  K(x-s)d^2  K(y-s)^T \; dsA_h(y).
\end{align}
To show that (\ref{SupGauss1}) and (\ref{SupGauss2}) hold for this covariance function, we will calculate the Taylor expansion of the covariance function $r_h(x+y, x)$ as $y\rightarrow 0$. For any vector-valued function $g(\cdot)=(g_1(\cdot),g_2(\cdot),g_3(\cdot))^T :\mathbb{R}^2\to \mathbb{R}^3$, denote
\begin{align*}
\nabla^{\otimes2} g(x)=\begin{pmatrix}
g_1^{(2,0)}(x) & g_1^{(1,1)}(x) & g_1^{(1,1)}(x) & g_1^{(0,2)}(x)\\
g_2^{(2,0)}(x) & g_2^{(1,1)}(x) & g_2^{(1,1)}(x) & g_2^{(0,2)}(x)\\
g_3^{(2,0)}(x) & g_3^{(1,1)}(x) & g_3^{(1,1)}(x) & g_3^{(0,2)}(x)
\end{pmatrix}
\end{align*}
and $x^{\otimes2}=(x_1^2,x_1x_2,x_1x_2,x_2^2)^T $. For any $x\in \mathbb{R}^2$, we have $\|x^{\otimes2}\|=\|x\|^2$. A Taylor expansion of $r_h(x+y, x)$ (see technical supplement (Qiao and Polonik, 2015b) for details) gives
\begin{align}\label{RExpression}
r_h(x+y,x)=1-y\Lambda(h,x)y^T+o(\|y^{\otimes2}\|), 
%
%
\end{align}
where the little-$o$ term in (\ref{RExpression}) is independent of $h$ and equivalent to $o(\|y\|^2)$, and $\Lambda(h,x)=\Lambda_1(h,x)+\Lambda_2(hx)$ with
\begin{align*}
\Lambda_1(h,x)&=\frac{1}{2}(a_h(x))^2\Big[\;\nabla A_h(x)^T \mathbf{R}\nabla A_h(x)\\
&\hspace*{1cm} + 2\,\big(A_h(x)^T \mathbf{R}\nabla A_h(x)\big)^T \big(A_h(x)^T \mathbf{R}\nabla A_h(x)\big) \big]
\end{align*}
and the matrix $\Lambda_2(hx)$ implicitly defined through (an explicit expression for $\Lambda_2(h,x)$ is derived below)
\begin{align}\label{Lambda2}
y^T\Lambda_2(hx)y=-\frac{1}{2}(a_h(x))^2A_h(x)^T \int\big[\nabla^{\otimes2}d^2  K(s)\big]y^{\otimes2}[d^2  K(s)]^T dsA_h(x).
\end{align}
%
%
%
Notice that $\Lambda_2$ only depends on the product $hx$, while $\Lambda_1(\lambda,h)$ depends on both $hx$ and $h$ itself (because of the presence of $\nabla A_n(x)$). Obviously $\Lambda_1(h,x)$ is symmetric and we will see below that $\Lambda_2(hx)$ can also be chosen to be symmetric. 
The matrix $\Lambda_1(h,x)$ is positive semi-definite. If we keep $hx$ fixed, say as $x^*$, and let $h\rightarrow0$,
\begin{align*}
\lim_{hx=x^*,h\rightarrow0}\Lambda_1(h,x)=0
\end{align*}
uniformly in $x^*\in\mathcal{H}$. On the other hand, if $hx = x^*$ is fixed, then $\Lambda_2(xh) = \Lambda_2(x^*)$ stays fixed as well. We will in fact give an explicit expression of $\Lambda_2(hx)$ and show that it is strictly positive definite under the given assumptions. Using these two properties, our expansion (\ref{RExpression})  then implies  (\ref{CovLoc}) with $D_s^h(t_1-t_2) = \big(\Lambda(h, t_1-t_2)\big)^{1/2},$ and this implies local equi-$D_t$-stationarity of $U_n(x), x \in {\mathcal{L}}$ (see remark right after Theorem~\ref{ProbMain}).  It then only remains to verify conditions (\ref{Lambda2C}), (\ref{SupGauss1}) and (\ref{SupGauss2}) from Theorem~\ref{ProbMain}. The latter follows easily, however, because due to the boundedness of the support of the kernel $K$, we have $r_h(x+y,x)=0$ once $\|y\|>1$. \\[8pt]
To show (\ref{Lambda2C}) we first derive an explicit expression for $\Lambda_2(hx)$ by using the properties of $K$ discussed after the assumptions in section \ref{Assumptions}. We have
\begin{align}\label{PartCalc}
\int\nabla^{\otimes2}d^2  K(s)y^{\otimes2}[d^2  K(s)]^T ds=-\int K^{(1,2)}(z)^2dz\; \Delta(y),
\end{align}
where\\[-25pt]
\begin{align*}
\Delta(y):=\begin{pmatrix}
b_1 y_1^2+y_2^2 & 2 y_1y_2 & y_1^2 + y_2^2\\
2y_1y_2 & y_1^2 + y_2^2 & 2y_1y_2\\
y_1^2 + y_2^2 & 2y_1y_2 & y_1^2+b_1y_2^2\\
\end{pmatrix},
\end{align*}
with 
\begin{align}\label{Pdefinition}
b_1=\int \big[K^{(3,0)}(z)\big]^2dz\bigg/\int \big[K^{(1,2)}(z)\big]^2dz.
\end{align}
Note that $ \int [K^{(3,0)}(z)]^2dz+\int [K^{(1,2)}(z)]^2dz\geq 2\int K^{(3,0)}(z) K^{(1,2)}(z)dz=$  $2\int [K^{(2,1)}(z)]^2dz,$
where by assumption (\textbf{K}3) equality is impossible. Thus $b_1 > 1$. Plugging (\ref{PartCalc}) into (\ref{Lambda2}) gives
\begin{align*}
y\Lambda_2(hx)y^T&=\frac{1}{2}(a_h(x))^2\int K^{(1,2)}(z)^2dzA_h(x)^T \Delta(y) A_h(x)\\
&=\frac{1}{2}(a_h(x))^2\int K^{(1,2)}(z)^2dz\;y^T\Omega(hx)y,
\end{align*}
where $\Omega(\cdot)$ is explicitly given in (\ref{OmegaDef}) below. It is straightforward to see that $\Omega$ is positive definite (by using that $b_1 > 1$). Hence
\begin{align*}
\Lambda_2(hx)=\frac{1}{2}(a_h(x))^2\int K^{(1,2)}(z)^2dz\;\Omega(hx),
\end{align*}
is positive definite. Let $\lambda_2(\cdot)$  denote the second eigenvalue of a matrix, then,
{\allowdisplaybreaks
\begin{align*}
\inf_{0<h\leq 1, hx\in \mathcal{H}}\lambda_2(\Lambda(h,x))
&=\inf_{0<h\leq 1, hx\in \mathcal{H}} \;\inf_{\|y\|=1}\Big(y^T\Lambda_1(h,x)y+y^T\Lambda_2(hx)y\Big)\\
&\geq \inf_{0<h\leq 1, hx\in \mathcal{H}}\Big(\inf_{\|y\|=1}y^T\Lambda_1(h,x)y+\inf_{\|y\|=1}y^T\Lambda_2(hx)y\Big)\\
%
%
&\ge\inf_{0<h\leq 1, hx\in \mathcal{H}}\lambda_2(\Lambda_2(hx))>0,
\end{align*}}
validating (\ref{Lambda2C}). It remains to verify that $r_h(x,y)$ (defined in (\ref{covU})) satisfies (\ref{SupGauss1}). We first derive a lower bound for the following quantity. This bound will then lead to the desired result.
\begin{align}
&\inf\limits_{\substack{x\in\mathcal{L}_h,x+y\in\mathcal{L}_h \\ \lambda\in \mathbb{R}, 0<h\leq 1,\|y\|>\delta}}\int \Big|A_h(x+y)^Td^2  K(x+y-s)-\lambda[d^2  K(x-s)]^T A_h(x)\Big|^2ds\nonumber\\
&\hspace{1cm}\geq \inf\limits_{\substack{x\in\mathcal{L}_h,x+y\in\mathcal{L}_h \\ \lambda\in \mathbb{R}, 0<h\leq 1,\|y\|>\delta}}\int_{\mathscr{B}(x+y,1)\backslash \mathscr{B}(x,1)} \Big|A_h(x+y)^Td^2  K(x+y-s) \nonumber\\[-5pt]
&\hspace{7cm}
-\lambda[d^2  K(x-s)]^T A_h(x)\Big|^2ds\nonumber\\[5pt]
&\hspace{1cm}=\inf\limits_{\substack{x\in\mathcal{L}_h, x+y\in\mathcal{L}_h \\ \|y\|>\delta, 0<h\leq 1}}\int_{\mathscr{B}(x+y,1)\backslash \mathscr{B}(x,1)} \Big|A_h(x+y)^Td^2  K(x+y-s)\Big|^2ds\nonumber\\
&\hspace{1cm}=\inf\limits_{\substack{x\in\mathcal{L}_h, x+y\in\mathcal{L}_h \\ \|y\|>\delta, 0<h\leq 1}}\int_{\mathscr{B}(0,1)\backslash \mathscr{B}(-y,1)} \Big|A_h(x+y)^Td^2  K(s)\Big|^2ds\nonumber\\
&\hspace{1cm}\geq \inf_{z\in\mathcal{L}}\inf_{\|y\|>\delta}\int_{\mathscr{B}(0,1)\backslash \mathscr{B}(-y,1)} \Big|A(z)^Td^2  K(s)\Big|^2ds.\label{InfIneq}
\end{align}
There exist a finite number of balls ${\mathscr B}_1$, ${\mathscr B}_2,\cdots, {\mathscr B}_N$ such that for any $y$ with $\|y\|>\delta$, at least one of the these balls is contained in $\mathscr{B}(0,1)\backslash \mathscr{B}(-y,1)$. It follows that for any $z\in\mathcal{L}$,
\begin{align*}
\inf_{\|y\|>\delta}\int_{\mathscr{B}(0,1)\backslash \mathscr{B}(-y,1)} \Big|A(z)^Td^2  K(s)\Big|^2ds\geq \min_{i\in\{1,2\cdots,N\}}\int_{\mathscr B_i} \Big|A(z)^Td^2  K(s)\Big|^2ds.
\end{align*}
Note that under assumptions (\textbf{K}4) and (\textbf{F}7), for any $i\in\{1,2\cdots,N\}$ there exists a constant $C>0$ such that the Lebesgue measure of $\{s\in {\mathscr B}_i: |A(z)^Td^2  K(s)|^2>C\}$ is positive. Therefore,
\begin{align*}
\inf_{\|y\|>\delta}\int_{\mathscr{B}(0,1)\backslash \mathscr{B}(-y,1)} \Big|A(z)^Td^2  K(s)\Big|^2ds>0.
\end{align*}
Since $\cal L$ is a compact set by assumption ({\bf F}3), it follows that the integral 
%
$\int_{\mathscr{B}(0,1)\backslash \mathscr{B}(-y,1)} \Big|A(z)^Td^2  K(s)\Big|^2ds$ 
%
is bounded away from zero uniformly over $z\in\mathcal{L}$ and $\|y\|>\delta$, which by (\ref{InfIneq}) further implies that 
\begin{align*}
\inf\limits_{\substack{x,\,x+y\in\mathcal{L}_h\\ \lambda\in \mathbb{R}, 0 < h < 1,\|y\|>\delta}}\int \Big|A_h(x+y)^Td^2  K(x+y-s)-\lambda[d^2  K(x-s)]^T A_h(x)\Big|^2ds>0.
\end{align*}
Recalling the definition of $a_h$ (see (\ref{defahAh})) we can rewrite this inequality as
\begin{align}\label{QuadraticForm}
&\inf\limits_{\substack{x,\,x+y\in\mathcal{L}_h \\ \lambda\in \mathbb{R}, 0<h\leq 1,\|y\|>\delta}}\zeta_{x,y,h}(\lambda)>0,\\[-25pt] \nonumber
\end{align}
where 
%
$ \zeta_{x,y,h}(\lambda) =\frac{\lambda^2}{a_h(x)^2}-2A_h(x+y)^T  \int d^2  K(x+y-s)[d^2  K(x-s)]^T ds\;A_h(x)\lambda 
%
+\frac{1}{a_h(x+y)^2}.$ 
%
If we consider $\zeta_{x,y,h}(\lambda)$ as a quadratic polynomial in $\lambda$ then its
%
discriminant is given by
\begin{align*}
\sigma(x,y,h):&=4\bigg\{A_h(x+y)^T  \int d^2  K(x+y-s)[d^2  K(x-s)]^T ds\;A_h(x)\bigg\}^2\\
&\hspace*{5cm} -\frac{4}{a_h(x)^2a_h(x+y)^2}.
\end{align*}
Inequality (\ref{QuadraticForm}) says that the polynomials $\zeta(\lambda;x,y,h)$ are uniformly bounded away from zero, and thus their discriminants must satisfy
\begin{align*}
&\sup\limits_{\substack{x\in\mathcal{L}_h, x+y\in\mathcal{L}_h \\ \|y\|>\delta, 0<h\leq 1}}\sigma(x,y,h)<0,\\[-25pt]
\end{align*}
or equivalently,
%
\begin{align*}
&\sup\limits_{\substack{x\in\mathcal{L}_h, x+y\in\mathcal{L}_h \\ \|y\|>\delta, 0<h\leq 1}}|r_h(x+y,x)|<1,
\end{align*}
%
%
which is (\ref{SupGauss1}). Finally notice that the constant $c$ in (\ref{CDef}) corresponds to the quantity $\log\big\{\frac{1}{\sqrt{2}\,\pi}\int_{\mathcal{M}_1}\|D_s^0 M_s^1\|ds\big\}$ from Theorem~\ref{ProbMain}. Using the above, one can easily see that $c$ has the form:
\begin{align*}
c=\log\bigg\{\sqrt{\frac{b_2}{2}}\frac{1}{\pi}\int_{\mathcal{L}}\frac{\|\Omega^{1/2}(s)M_s\|}{\|A(s)\|_{\mathbf R}}\,ds\bigg\},
\end{align*}
where $b_2 = \frac{1}{2 }\int K^{(1,2)}(z)^2dz$, $M_s, \, s \in {\mathcal{L}}$ is the unit tangent vector to $\mathcal{L}$ at $s$, and $\Omega(s) = \big(\omega_{ij}\big)_{i,j = 1,2}$ a $(2 \times 2)$-matrix with
\begin{align}\label{OmegaDef}
\omega_{11}(s)&=b_1\,A_1(s)^2+A_2(s)^2+A_3(s)^2+2A_1(s)A_3(s),\nonumber\\
\omega_{12}(s)=\omega_{21}(s)&=2A_1(s)A_2(s)+2A_2(s)A_3(s),\\
\omega_{22}(s)&=b_1\,A_3(s)^2+A_2(s)^2+A_1(s)^2+2A_1(s)A_3(s). \nonumber
\end{align}
where $A(x) = (A_1(x), A_2(x), A_3(x))^T $  with $A(x)$ as above, and $b_1$ as in (\ref{Pdefinition}).\begin{flushright}\hfill$\square$\end{flushright}

\begin{supplement}[id=suppA]
  \stitle{SUPPLEMENTAL MATERIAL FOR: THEORETICAL ANALYSIS OF NONPARAMETRIC FILAMENT ESTIMATION}
  \slink[doi]{COMPLETED BY THE TYPESETTER}
  \sdatatype{.pdf}
  \sdescription{Due to page constraints on the main article this supplement presents the proofs of some technical results in this paper as well as some miscellaneous results (Appendix B) that are used in the proofs.\\}
\end{supplement}

{\bf Acknowledgement.} The authors would like to thank the associate editor and the referees for careful reading of the manuscript and for insightful comments that lead to a significant improvement of the manuscript.\\

{\bf References.}

\begin{description}
\item  Arias-Castro, E., Donoho, D.L. and Huo, X. (2006): Adaptive multiscale detection of filamentary structures in a background of uniform random points. {\em Ann. Statist.}, {\bf 34}(1), 326-349.
%
%
\item Barrow, J.D., Sonoda, D.H. and Bhavsar, S.P. (1985). Minimal spanning tree, filaments and galaxy clustering. {\em Monthly Notices of the Royal Astronomical Society}, {\bf 216}, 17-35.
%
\item Bharadwaj, S., Bhavsar, S.P. and Sheth, J.V. (2004): The size of the longest filaments in the universe. {\em The Astrophysical Journal}, {\bf 606}, 25-31.
\item Bickel, P. and Rosenblatt, M. (1973): On some global measures of the deviations of density function estimates. {\em Ann. Statist.}, {\bf 1}(6), 1071-1095.
\item Chen, Y-C., Genovese, C. R., and Wasserman, L. (2013): Uncertainty Measures and Limiting Distributions for Filament Estimation. {\em arXiv: 1312.2098.v1}.
\item Chen, Y-C., Genovese, C. R., and Wasserman, L. (2014a): Asymptotic theory for density ridges. {\em arXiv: 1406.5663}.
\item Chen, Y-C., Genovese, C. R., and Wasserman, L. (2014b): Generalized mode and ridge estimation. {\em arXiv: 1406.1803}.
\item Cheng, Y. (1995): Mean shift, mode seeking, and clustering. {\em IEEE Trans. Pattern Analysis and Machine Intelligence}, {\bf 17}(8), 790-799.
\item Comaniciu, D. and Meer, P. (2002): Mean shift: A robust approach toward feature space analysis. {\em IEEE Transactions on Pattern Analysis and Machine Intelligence}, {\bf 24}(5) 603-619.
%
%
%
%
%
%
%
%
\item Dietrich, J., Werner, N., Clowe, D., Finoguenov, A., Kitching, T., Miller, L. and Simionescu, A. (2012): A filament of dark matter between two clusters of galaxies. {\em Nature}, {\bf 487}, 202-204.
%

%
%
\item Duong, T., Cowling, A., Koch, I. and Wand, M. (2008): Feature Significance for multivariate kernel density estimation. {\em Comp. Statist. Data Anal.}, {\bf 52}(9), 4225-4242.
%
%
%
\item Eberly, D. (1996): {\em Ridges in image and data analysis.} Kluwer, Boston.
%
%
%
\item Einmahl, U., and Mason, D.M. (2005): Uniform in bandwidth consistency for kernel-type function estimators. {\em Ann. Statist.} {\bf 33}(3), 1380-1403.
%
%
\item Fukunaga, K., and Hostetler, L. (1975): The estimation of the gradient of a density function, with applications in pattern recognition. {\em IEEE Tran. Information Theory}, {\bf 21}(1), 32-40.
\item Genovese, C.R., Perone-Pacifico, M., Verdinelli, I. and Wasserman, L. (2009): On the path density of a gradient field. {\em Ann. Statist.} {\bf 37}, 3236-3271.
\item Genovese, C.R., Perone-Pacifico, M., Verdinelli, I. and Wasserman, L. (2012a): The geometry of nonparametric filament estimation. {\em J. Amer. Statist. Assoc.}, {\bf 107}(498), 788-799.
\item Genovese, C.R., Perone-Pacifico, M., Verdinelli, I. and Wasserman, L. (2012b). Manifold estimation and singular deconvolution under Hausdorff loss. {\em Ann. Statist.} {\bf 40}, 941-963.
\item Genovese, C.R., Perone-Pacifico, M., Verdinelli, I. and Wasserman, L. (2013): Nonparametric inference for density modes. {\em arXiv:1312.7567v1.}
\item Genovese, C.R., Perone-Pacifico, M., Verdinelli, I. and Wasserman, L. (2014): Nonparametric ridge estimation.  {\em Ann. Statist.} {\bf 42},  1511-1545.
%
%
\item Gin\'{e}, E. and Guillou, A. (2002): Rates of strong uniform consistency for multivariate kernel density estimators. {\em Annales de l'Institut Henri \\Poincar\'{e} (B)}, {\bf 38}(6), 907-921.
\item Gronwall, T. (1919): Note on the derivatives with respect to a parameter of the solutions of a system of differential equations. {\em Ann. of Math.}, {\bf 20}(2), 292-296.
\item Hall, P., Qian W. and Titterington, D.M. (1992): Ridge finding from noisy data. {\em J. Comp. Graph. Statist.}, {\bf 1}(3), 197-211.
\item Hastie, T. and Stuetzle, W. (1989): Principal curves. {\em J. Amer. Statist. Assoc.}, {\bf 84}(406), 502-516.
%
%
\item Koltchinskii, V., Sakhanenko, L. and Cai, S. (2007): Integral curves of noisy vector fields and statistical problems in diffusion tensor imaging: Nonparametric kernel estimation and hypotheses testing. {\em  Ann. Statist.}, {\bf 35}(4), 1576-1607.
\item Novikov, D., Colombi, S. and Dor\'{e}, O. (2006). Skeleton as a probe of the cosmic web: Two-dimensional case. {\em Monthly Notices of the Royal Astronomical Society}, {\bf 366}(4), 1201-1216.
\item  Mikhaleva, T.L. and Piterbarg, V.I. (1996): On the distribution of the maximum of a Gaussian field with constant variance on a smooth manifold. {\em Theory Probab. Appl.}, {\bf 41}(2), 367-379.
%
%
\item Ozertem, U. and Erdogmus, D. (2011): Locally defined principal curves and surfaces. {\em Journal of Machine Learning Research}, {\bf 12}, 1249-1286.
\item Pimbblet, K.A., Drinkwater, M.J. and Hawkrigg, M.C. (2004): Inter-cluster filaments of galaxies programme: abundance and distribution of filaments in
the 2dFGRS catalogue. {\em Monthly Notices of the Royal Astronomical Society}, {\bf 354}, L61-L65.
%
%
%
\item Piterbarg, V.I. and Stamatovich, S. (2001): On maximum of Gaussian non-centered fields indexed on smooth manifolds. In {\em Asymptotic methods in probability and statistics with applications (St. Petersburg)},  N. Balakrishnan, I.A. Ibragimov, V.B. Nevzorov (eds) 189-203, Birkh\"{a}user, Boston.
%
%
\item Qiao, W. (2013): On estimation of filament structures. {\em PhD Thesis, UC Davis}
\item Qiao, W. and Polonik, W. (2015a): Extrema of Gaussian fields on growing manifolds. {\em Submitted.}
\item Qiao, W. and Polonik, W. (2015b): Supplemental material for: theoretical analysis of nonparametric filament estimation. doi: COMPLETED BY THE TYPESETTER.
\item Rosenblatt, M. (1952): Remarks on a multivariate transformation. {\em Ann. Math. Statist.}, {\bf 23}(3), 470-472.
\item Rosenblatt, M. (1976): On the maximal deviation of $k$-dimensional density estimates. {\em Ann. Probab.}, {\bf 4}(6), 1009-1015.
%
%
%
\item van der Vaart, A.W. and Wellner, J.A. (1996): Weak Convergence and Empirical Process. Springer, New York.
\end{description}

\pagebreak

\begin{frontmatter}
\title{Supplemental material for: \protect \\ Theoretical Analysis of Nonparametric Filament Estimation}
%
\author{Wanli Qiao}
\and
\author{Wolfgang Polonik} %
\maketitle
%
\begin{abstract}
\noindent Due to page constraints on the main article this supplement presents the proofs of some technical results from Qiao and Polonik (2015) (Appendix A) as well as some miscellaneous results (Appendix B) that are used in the proofs. Appendix B also contains the derivation of the function $G$ and some of its properties that play an important role in the paper.
\end{abstract}
\end{frontmatter}
\begin{appendix}
\section{Technical proofs}
\section*{Theorem 3.2 and its proof} For convenience of the reader we first restate Theorem 3.2.\\

{\bf Theorem 3.2} {\em
Under assumptions (\textbf{F}1)--(\textbf{F}2), (\textbf{K}1)--(\textbf{K}2) and (\textbf{H}1), for any $x_0\in \mathcal{G}$, $0 < \gamma < \infty$ and $0 \leq T_{min},T_{max}< \infty,\,T_{min} + T_{max} \ne 0$ with $\{\X_{x_0}(t),\,t \in [-T_{min},T_{max}]\} \subset {\cal H}$ and
\begin{align}\label{Gammax0}
\inf_{-T_{min}\leq s< u\leq T_{max}}\bigg\|\frac{1}{u-s}\int_s^uV(\X_{x_0}(\lambda))d\lambda\bigg\|\geq\gamma,
\end{align}
the sequence of stochastic process $\sqrt{nh^5}(\hat \X_{x_0}(t)-\X_{x_0}(t))$, $-T_{min}\leq t\leq T_{max}$, converges weakly in the space $C[-T_{min},T_{max}]:=C([-T_{min},T_{max}],\mathbb{R}^2)$ of $\mathbb{R}^2$-valued continuous functions on $[-T_{min},T_{max}]$ to the Gaussian process $\omega(t), -T_{min}\leq t\leq T_{max}$, satisfying the SDE
\begin{align}\label{SDEMain}
d\omega(t)=&\frac{\sqrt{\beta}}{2}\tilde G(\X_{x_0}(t))v(\X_{x_0}(t))dt+\nabla V(\X_{x_0}(t))\omega(t)dt\nonumber\\
&+\bigg\{\tilde G(\X_{x_0}(t))\bigg[\int\int{\mathbb K}(\X_{x_0}(t),\tau,z)f(\X_{x_0}(t))dzd\tau\bigg]\tilde G(\X_{x_0}(t))^T \bigg\}^{1/2}dW(t)
\end{align}
with initial condition $\omega(0)=0$, where $W(t),t\geq0$ is a two-sided standard Brownian motion in $\mathbb{R}^2$,
\begin{align}
v(x) &=
\left(
\begin{array}{c}
\int K(z)z^T \nabla^2f^{(2,0)}(x)zdz \\
\int K(z)z^T \nabla^2f^{(1,1)}(x)zdz \\
\int K(z)z^T \nabla^2f^{(0,2)}(x)zdz \\
\end{array}
\right) \in \bbR^3, \label{BDefinition}\\[-15pt]
\intertext{and}
{\mathbb K}(x,\tau,z) &:=d^2 K(z)\big[d^2 K\big(\tau V(x)+z\big)\big]^T \in \bbR^{3 \times 3}.\label{PsiDefinition}
\end{align}
}

{\sc Proof.} The structure of the proof follows Koltchinskii et al. (2007). Let $\hat \Y_{x_0}(t)=\hat \X_{x_0}(t)-\X_{x_0}(t)$.  We will find sequences of stochastic processes $\hat \Z_{x_0}(t)\equiv \hat \Z_{x_0,n}(t)$ and $\hat \D_{x_0}(t)\equiv \hat \D_{x_0,n}(t)$ such that
\begin{align*}
\hat \Y_{x_0}(t)=\hat \Z_{x_0}(t)+\hat \D_{x_0}(t), \quad t\in[-T_{min},T_{max}],
\end{align*}
where $\sqrt{nh^5}\hat \Z_{x_0,n}(t), -T_{min}\leq t\leq T_{max}$, converges weakly to the Gaussian process $\omega(t)$ defined in (\ref{SDEMain}) and
\begin{align}
\sup_{-T_{min}\leq t\leq T_{max}}|\hat \D_{x_0}(t)|=o_p\bigg(\frac{1}{\sqrt{nh^5}}\bigg).\label{3.2}
\end{align}
This immediately implies the assertion of the theorem. For ease of notation we drop from now on the index $x_0$ in this proof. Write
\begin{align}\label{YIntegral}
\hat \Y(t)
=\int_0^t(\hat V-V)(\X(s))ds+\int_0^t\nabla V (\X(s))  \hat \Y(s)ds+\hat \R(t),
\end{align}
where
\begin{align*}
\hat \R(t)&:=\int_0^t[\hat V(\hat \X(s))-\hat V(\X(s))-\nabla V (\X(s))  \hat\Y(s)]ds.
\end{align*}

It is not difficult to see (by following the proof on page 1585 of Koltchinskii et al. 2007) that
\begin{align}\label{Reminder}
\sup_{-T_{min}\leq t\leq T_{max}}\|\hat \R(t)\|=o_p\bigg(\sup_{-T_{min}\leq t\leq T_{max}}\|\hat \Y(t)\|\bigg).
\end{align}

Suppose $\hat \Z$ satisfies the differential equation
\begin{align}\label{Z1}
\frac{d\hat \Z(t)}{dt}=\hat V(\X(t))-V(\X(t))+\nabla V (\X(t))\hat \Z(t), \quad \hat \Z(0)=0.
\end{align}
which means that
\begin{align}\label{ZIntegral}
\hat \Z(t)=\int_0^t[\hat V(\X(s)))-V(\X(s))]ds+\int_0^t\nabla V (\X(s))\hat \Z(s)ds.
\end{align}

Denote $\hat \D(t):=\hat \Y(t)-\hat \Z(t)$. Then from (\ref{YIntegral}) and (\ref{ZIntegral}) we have
\begin{align*}
\hat \D(t)=\int_0^t\nabla V (\X(s)) \hat \D(s)ds+\hat \R(t).
\end{align*}

Following the proof on page 1586 of Koltchinskii et al. (2007) we can show that
\begin{align*}
\sup_{-T_{min} \leq t \leq T_{max}}\|\hat \D(t)\|=o_p\bigg(\sup_{-T_{min} \leq t \leq T_{max}}\|\hat \Z(t)\|\bigg) \quad \textrm{as} \quad n\rightarrow \infty.
\end{align*}
As we will show below, the sequence $\sqrt{nh^5}\hat \Z(t), -T_{min} \leq t \leq T_{max}$, converges in distribution to the Gaussian process $\omega(t)$, it follows that
\begin{align*}
\sup_{-T_{min} \leq t \leq T_{max}}\|\hat \Z(t)\|=O_P\bigg(\frac{1}{\sqrt{nh^5}}\bigg).
\end{align*}
Immediately we get (\ref{3.2}).\\

We now show the asserted weak convergence of $\sqrt{nh^5}\hat \Z(t), -T_{min} \leq t \leq T_{max}$. Denote by $C_0^{(1)}[-T_{min},T_{max}]$ the set of all $\mathbb{R}^2$-valued continuously differentiable functions on $[-T_{min},T_{max}]$ with value zero at the point 0. We define a mapping $\mathscr{U}: C_0^{(1)}[-T_{min},T_{max}] \mapsto C[-T_{min},T_{max}]$ such that for any $S\in C_0^{(1)}[-T_{min},T_{max}]$, $\mathscr{U}S(t)$ is the solution of the following differential equation in $\mathbb{R}^2$:
\begin{align}\label{DiffEquDef}
\frac{du(t)}{dt}=\frac{dS(t)}{dt}+\nabla V (\X(t))u(t), \quad u(0)=0.
\end{align}
As indicated on page 1587 of Koltchinskii et al. (2007), $\mathscr{U}$ is a Lipschitz mapping with respect to the uniform distance.\\

Define a stochastic process $\chi$ satisfying the SDE
\begin{align}\label{EtaSDE}
d\chi(t)=&\frac{\sqrt{\beta}}{2}\tilde G(t) v(\X(t))dt +\bigg\{\tilde G(t)\bigg[\int\int\Psi(\X(t),\tau,z)f(\X(t))dzd\tau\bigg]\tilde G(t)^T \bigg\}^{1/2}dW(t)
\end{align}
with initial condition $\chi(0)=0$. Then $\mathscr{U}\chi$ satisfies (\ref{SDEMain}) with value zero at the point 0, i.e., $\mathscr{U}\chi=\omega$.\\

Denote two sequences of processes
\begin{align*}
&\chi_n(t):=\sqrt{nh^5}\int_0^t[\hat V(\X(s))-V(\X(s))]ds,\\
&\omega_n(t):=\sqrt{nh^5}\hat \Z(t).
\end{align*}
Then by (\ref{Z1}) we have $\omega_n=\mathscr{U}\chi_n$. As we will show below, the sequence $\chi_n$ converges weakly in the space $C[-T_{min},T_{max}]$ to $\chi$. Since $\mathscr{U}$ is Lipschitz, $\omega_n$ converges weakly to $\omega$, which is the assertion.\\

It thus remains to show the weak convergence of the sequence $\chi_n$.
In what follows we will write the explicit form of $\hat V(x)$ and $V(x)$, i.e., $\hat V(x)=G(d^2 \hat f(x))$ and $V(x)=G(d^2  f(x))$. 
%
%
Recall that $G=(G_1, G_2)^T$ and $\tilde G(x):=\nabla G(d^2  f(x))$. Denote $\mathcal{J}_n(t):=\int_0^t \tilde G(\X(s))d^2 \hat f(\X(s))ds$ and $\mathcal{J}(t):=\int_0^t\tilde G(\X(s))d^2  f(\X(s))ds$. A first order Taylor expansion with respect to the variables in $G$ gives
\begin{align}\label{LinearTaylor}
\int_0^t[\hat V(\X(s))-V(\X(s))]ds=\mathcal{J}_n(t) - \mathcal{J}(t)+\mathcal{R}_n(t),
\end{align}
where the remainder term
\begin{align*}
\mathcal{R}_n(t):=\left(\begin{array}{c} \int_0^td^2 (\hat f - f) (\X(s))^T M_1(s)d^2 (\hat f - f)  (\X(s))ds \\ \int_0^t d^2 (\hat f - f)  (\X(s))^T M_2(s)d^2 (\hat f - f)  (\X(s))ds\end{array}\right)
\end{align*}
with
\begin{align}\label{MiInt}
M_i(s):=\int_0^1\nabla^2G_i(d^2  f(\X(s))+\tau d^2 (\hat f - f)  (\X(s)))d\tau, \quad i=1,2.
\end{align}
For this remainder term we have 
\begin{align*}
\sup_{t \in [-T_{min},T_{max}]}\|\mathcal{R}_n(t)\|&\leq 2\sup\limits_{\substack{ t \in [-T_{min},T_{max}] \\ i=1,2}}\bigg|\int_0^t d^2 (\hat f - f)  (\X(s))^T M_i(s)d^2 (\hat f - f)  (\X(s))ds\bigg|\\
&\leq 2T\bigg(\sup_{x\in\mathbb{R}^2}\|d^2 (\hat f - f)  (x)\|\bigg)^2\sup\limits_{\substack{i=1,2 \\ w\in \mathcal{H}^\epsilon}}\|\nabla^2 G_i(w)\|=O_P\Big(\frac{\log n}{nh^6} \Big) = o_p\bigg(\frac{1}{\sqrt{nh^5}}\bigg),
\end{align*}
because $\sup_{x\in \mathcal{H}^\epsilon}\|d^2 (\hat f - f)  (x)\|=O_p(\sqrt{\log{n} /  nh^6})$  and $nh^8 / \log{n}\rightarrow\infty$ by assumption (\textbf{H}1).\\


The linear approximation $\mathcal{J}_n(t) - \mathcal{J}(t)$ in (\ref{LinearTaylor}) is a sum of iid random variables for each fixed $n$, indicating asymptotic normality. In fact, weak convergence of the process $\sqrt{nh^5}(\mathcal{J}_n(t) - \mathcal{J}(t))$ to $\chi(t)$ can be shown by proving convergence of finite dimensional distributions along with asymptotic stochastic equicontinuity by following the proof on pages 1591-1595 of Koltchinskii et al. (2007). Then we conclude the proof of this theorem. Further details are omitted.\hfill$\square$\\
%
\section*{The proof of (5.11)}
By our assumptions, the maps $K_\ell(z)$ are Lipschitz continuous. Let $c_\ell$ be the corresponding Lipschitz constants. Fix $\tau > 0,$ and let $x_{0},x^*_{0} \in {\mathcal{G}}$ be such that $\|x_{0} - x^*_{0}\| \le \tau$, and assume that $T_{x_0} \le T_{x_0^*}$.  We first show that there exists a constant $C > 0$ with
\begin{align}
&\sup_{x \in {\mathbb R}^2}\Big| \int_0^t K_\ell\Big(\frac{\X_{x_{0}} (s)-x}{h}\Big) -K_\ell\Big(\frac{\X_{x^*_{0}} (s)-x}{h}\Big) ds \Big| \le C\tau\qquad\ell = 1,2,3,\;\;0 < t < T_{x_0}.\label{fact1}
%
\end{align}
Recall that the support of $K(z)$ is contained in a unit ball $\mathscr{B}(0,1)$. Thus we have with $A_{x,x_0}(h) = \{s: \|\X_{x_{0}}(s) - x\| \le h\}$ that
\begin{align}
&\int_0^t \Big|K_\ell\Big(\frac{\X_{x_{0}}(s)-x}{h}\Big) -K_\ell\Big(\frac{\X_{x^*_{0}}(s)-x}{h}\Big)\Big| ds\nonumber \\
& = \int_0^t \Big|K_\ell\Big(\frac{\X_{x_{0}}(s)-x}{h}\Big) -K_\ell\Big(\frac{\X_{x^*_{0}}(s)-x}{h}\Big)\Big| \, {\bf 1}_{A_{x,x_{0}}(h) \cup A_{x,x^*_{0}}(h)}(s)\,ds\nonumber\\
&\le \int_0^t c_\ell \Big\| \frac{\X_{x_{0}}(s) -{\X_{x^*_{0} }(s)}}{h}\Big\|\,{\bf 1}_{A_{x,x_{0}}(h) \cup A_{x,x^*_{0}}(h)}(s)\,ds.\label{part1}
\end{align}
The Lebesgue measure of the set $A_{x,x_{0}}(h) \cup A_{x,x^*_{0}}(h)$ is of the order $O(h)$. To see that observe that for $s, s^\prime \in A_{x,x_{0}}(h)$ we have $\|\X_{x_{0}}(s) - \X_{x_{0}}(s^\prime)\| \le 2h$, so that with $\gamma_{\mathcal{G}} > 0$ from  (\ref{Gammax1}):
{\allowdisplaybreaks
\begin{align*}
2h \ge \|\X_{x_{0}}(s) - \X_{x_{0}}(s^\prime)\| & = \Big\| \int_s^{s^\prime} V(\X_{x_{0}}(t))\,dt\Big\| = \Big\| \frac{1}{s - s^\prime} \int_s^{s^\prime} V(\X_{x_{0}}(t)\,dt\Big\|\; |s - s^\prime| \ge \gamma_{\mathcal{G}} \,|s - s^\prime|.
\end{align*}}
It follows that ${\rm Leb}(A_{x,x_{0}}(h)) \le 2h/\gamma_{\mathcal{G}}$ and the same holds for $A_{x,x^*_{0}}(h)$, so that
\begin{align}\label{aaa}
{\rm Leb}(A_{x,x_{0}}(h) \cup A_{x,x^*_{0}}(h)) \le \frac{4h}{\gamma_{\mathcal{G}}}.
\end{align}
To continue the argument we will use the fact that $\X_{x_0}(s)$ is Lipschitz continuous in $x_0$ under the sup-norm. To see this note that for any $x_0, x_0^\prime\in\mathcal{G}$ and $s\in[0, \min{(T_{x_0}, T_{x_0^\prime})}]$,
\begin{align*}
&\|\X_{x_0}(s)-\X_{x_0^\prime}(s)\|=\bigg\|x_0-x_0^\prime+\int_0^s\big[ V(\X_{x_0}(t))-V(\X_{x_0^\prime}(t))\big] dt\bigg\| \\
&\hspace*{3cm}\leq \|x_0-x_0^\prime\|+\sup_{x\in\mathcal{H}}\|\nabla V(x)\|_F\int_0^s\|\X_{x_0}(t)-\X_{x_0^\prime}(t)\|dt.
\end{align*}
Applying Gronwall's inequality, we have for all $s \in [0, \min(T_{x_0}, T_{x^\prime_0})]$
\begin{align}\label{Lipschitz}
\|\X_{x_0}(s)-\X_{x_0^\prime}(s)\|\leq \|x_0-x_0^\prime\| \exp\bigg\{\min(T_{x_0}, T_{x^\prime_0})\sup_{x\in\mathcal{H}}\|\nabla V(x)\|_F\bigg\}.
\end{align}
%
By using (\ref{aaa}) and (\ref{Lipschitz}), the integral in (\ref{part1}) can now be bounded by
\begin{align*}
c_\ell\, \int_0^t \Big\| \frac{\X_{x_{0}} (s)- \X_{x^*_{0} }(s)}{h}\Big\| \,{\bf 1}_{A_{x,x_{0}} \cup A_{x,x^*_{0}}}(s)\,ds
\le &\;\;  c\,\frac{\tau}{h} c_\ell\, \int_0^t\,{\bf 1}_{A_{x,x_{0}} \cup A_{x,x^*_{0}}}(s)\,ds\\
\le &\;\; c\,\frac{\tau}{h}\; c_\ell \;\frac{4h}{\gamma_{\mathcal{G}}} = \frac{4 c\,c_\ell}{\gamma_{\mathcal{G}}}\;\tau,
\end{align*}
where $c =  \exp\bigg\{T_{\mathcal{G}}\sup_{x\in\mathcal{H}}\|\nabla V(x)\|_F\bigg\}$. We have verified (\ref{fact1}). Next we show (\ref{VCDef}).  Fix $0 < \tau \le 1.$ Since $x_0 \in {\mathcal{G}}$ and ${\mathcal{G}}$ is compact there exist points $x_{0,1},\ldots,x_{0,N_1}$ such that for all $x_0 \in {\mathcal{G}}$ we have $\min_{i = 1,\ldots,N_1}\|x_0 - x_{0,i}\| \le \tau$ and $N_1= N_1(\tau) \le A_1 \frac{1}{\tau}$ for some constant $A_1 > 0.$ Further, let $t_1,\ldots,t_{N_2}$ be such that for all $t \in [0,\max_{x_0 \in {\mathcal{G}}} T_{x_0}]$ we have $\min_{i=1,\ldots,N_2}|t - t_i| \le \tau$ and $N_2 = N_2(\tau) \le A_2\frac{1}{\tau}$ for $A_2 > 0$. With these definitions let
\begin{align*}
\widetilde {\cal F}_{j,\ell}(\tau) = \bigg\{\omega_{j,\ell}(\cdot;x_{0,i},t_k): i = 1,\ldots,N_1(\tau),\,t_k \le T_{x_{0,i}}, k \in \{1,\ldots,N_2(\tau)\} \bigg\}.
\end{align*}
It is clear that the number of functions in $\widetilde {\cal F}_{j,\ell}(\tau)$ is bounded by $C\,\big(\frac{1}{\tau}\big)^2.$ Without loss of generality we can assume that $T_{x_{0,i}} = \max\{T_{x_0}:\,\|x_0 - x_{0,i}\| \le \tau\},\,i = 1,\ldots,N_1(\tau).$ Otherwise define $x^*_{0,i} = \argmax\{T_{x_0}:\,\|x_0 - x_{0,i}\| \le \tau\}$, replace $x_{0,i} $ by $x^*_{0,i}$ and change $\tau$ to $2\tau$.\\

We show that $\widetilde{\cal F}_{j,\ell}(\tau)$ serves as an approximating class of functions assuring (\ref{VCDef}). To see that let $\omega_{j\ell}(\cdot;x_{0},t) \in {\cal F}_{j,\ell}$ and let $\omega_{j,\ell}(\cdot;x^*_{0},t^*) \in \widetilde {\cal F}_{j,\ell},$ where $\|x_0 - x_0^*\| \le \tau$ and $|t - t^*| \le \tau.$ To verify (\ref{VCDef}) we show that for some constant $C>0$ the following two bounds hold for $t \in [0,T_{x_0}]$:
\begin{align}
&d_{\infty}(\omega_{j,\ell}(\cdot;x_{0},t) , \omega_{j,\ell}(\cdot;x^*_{0},t)) \le C \tau,\label{bound1}\\
&d_{\infty}(\omega_{j,\ell}(\cdot;x^*_{0},t) , \omega_{j,\ell}(\cdot;x^*_{0},t^*)) \le C \tau\label{bound2},
\end{align}
where $d_\infty(f,g) $ denotes the supremum distance of functions $f$ and $g$. First we show (\ref{bound1}). Recall that $\widetilde G_j(u) = \nabla G_j(d^2  f(u)),$ and denote  $\widetilde G_{j,\ell} (u) = \frac{\partial}{\partial u_\ell}\widetilde{G}_j(u),\,\ell = 1,2,3.$ We obtain
\begin{align}
&d_{\infty}(\omega_{j,\ell}(\cdot;x_{0},t) , \omega_{j,\ell}(\cdot;x^*_{0},t))\nonumber \\[5pt]
  = &\;\;\sup_{x \in {\mathbb R}^2}\Big| \int_0^{t} \Big[\widetilde G_{j,\ell}(\X_{x_0}(s)) K_\ell\Big(\frac{\X_{x_0}(s)-x}{h}\Big) 
 %
 - \widetilde G_j(\X_{x^*_0}(s)) K_\ell \Big(\frac{\X_{x^*_0}(s)-x}{h}\Big)\Big]ds\Big|\nonumber \\[5pt]
 %
  %
  %
  \le &\;\;\sup_{x \in {\mathbb R}^2} \int_0^t \Big| \widetilde G_{j,\ell}(\X_{x_0}(s))-  \widetilde G_{j,\ell}(\X_{x^*_0}(s))\Big|  K_\ell\Big(\frac{\X_{x_0}(s)-x}{h}\Big) ds\label{double1}\\[5pt]
  &\hspace*{1cm}+  \sup_{x \in {\mathbb R}^2} \int_0^t   \Big|  K_\ell\Big(\frac{\X_{x_0}(s)-x}{h}\Big) - K_\ell\Big(\frac{\X_{x^*_0}(s)-x}{h}\Big)\Big|\; \big|\widetilde G_{j,\ell}(\X_{x^*_0}(s))\big|  ds.\label{double2}
\end{align}
Now, with $M_\ell = \sup_u K_\ell(u)$, the term in (\ref{double1}) can further be bounded by
\begin{align*}
&M_\ell \; \int_0^t \big| \widetilde G_{j,\ell}(\X_{x_0}(s))-  \widetilde G_{j,\ell}(\X_{x^*_0}(s))\big| ds  \le c^\prime\,M_\ell T_{\mathcal{G}}\, \tau,
\end{align*}
for some $c^\prime > 0$, where we are using Lipschitz continuity of $\widetilde G_{j,\ell}$ along with (\ref{Lipschitz}). To bound (\ref{double2}) we first use the fact that the functions $\widetilde G_{j,\ell}$ are bounded, so that the integral in (\ref{double2}) is less than or equal to
\begin{align*}
 \sup_{u}|\widetilde G_{j,\ell}(u)|\;\;\sup_{x \in {\mathbb R}^2}\int_0^t\;\Big| K_\ell\Big(\frac{\X_{x_0}(s)-x}{h}\Big) - K_\ell\Big(\frac{\X_{x^*_0}(s)-x}{h}\Big)\Big| ds \le C \sup_{u}|\widetilde G_{j,\ell}(u)|\, \tau,
\end{align*}
by using (\ref{fact1}). This shows (\ref{bound1}). The bound (\ref{bound2}) follows by using the boundedness of the integrant in the definition of the functions $\omega_{j,\ell}$ (see (\ref{def-varpi})). This completes the proof of (\ref{VCDef}).\hfill$\square$

\section*{Proposition 5.1 and its proof}
First we show uniform consistency of $\hat\theta_{x_0}$ which is needed for the proof of Proposition~\ref{ThetaRate}.

\begin{proposition}\label{ThetaConsistency}
Under assumptions (\textbf{F}1)--(\textbf{F}6), (\textbf{K}1)--(\textbf{K}2) and (\textbf{H}1), we have
\begin{align}
\sup_{x_0 \in \mathcal{G}}\big|\hat\theta_{x_0} - \theta_{x_0}\big|=o_p(1),\label{ThetaConsistency1} 
%
\end{align}
\end{proposition}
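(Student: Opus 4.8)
The plan is to run the standard $M$-estimator consistency argument: combine a uniform (over starting points $x_0\in\mathcal G$) law of large numbers for the functions that define $\theta_{x_0}$ and $\hat\theta_{x_0}$ with a uniform identifiability/separation property of the population minimiser. Write $a_{x_0}(t)=\blangle\nabla f(\X_{x_0}(t)),V(\X_{x_0}(t))\brangle$, $\ell_{x_0}(t)=\lambda_2(\X_{x_0}(t))$, and correspondingly $\hat a_{x_0}(t)=\blangle\nabla\hat f(\hat\X_{x_0}(t)),\hat V(\hat\X_{x_0}(t))\brangle$, $\hat\ell_{x_0}(t)=\hat\lambda_2(\hat\X_{x_0}(t))$, so that $\theta_{x_0}$ (resp. $\hat\theta_{x_0}$) is the element of smallest modulus of $\Theta_{x_0}=\{t:a_{x_0}(t)=0,\ \ell_{x_0}(t)<0\}$ (resp. of $\wh\Theta_{x_0}$). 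Since the portions of the integral curves that matter lie in $\mathcal H$ and $|\theta_{x_0}|\le a^*$, I would first restrict the parameter $t$ throughout to a fixed compact interval $[-T,T]$ chosen — as the construction of $\mathcal G$ and the choice of $a^*$ in (\textbf{F}3) permit — so small that $\X_{x_0}([-T,T])\subset\mathcal H$ and $\Theta_{x_0}\cap[-T,T]=\{\theta_{x_0}\}$ for every $x_0\in\mathcal G$; a routine argument then identifies the minimiser of $|t|$ over $\wh\Theta_{x_0}\cap[-T,T]$ with $\hat\theta_{x_0}$ with probability tending to one.

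\emph{Step 1: uniform convergence of the defining functions.} I would show $\sup_{x_0\in\mathcal G}\sup_{|t|\le T}\big(|\hat a_{x_0}(t)-a_{x_0}(t)|+|\hat\ell_{x_0}(t)-\ell_{x_0}(t)|\big)=o_p(1)$. Two inputs enter. First, the standard uniform-in-$x$ consistency of the kernel estimators of the first two derivatives of $f$, $\sup_x\|d^2\hat f(x)-d^2 f(x)\|+\sup_x\|\nabla\hat f(x)-\nabla f(x)\|=O_p(\sqrt{\log n/(nh^6)})=o_p(1)$ under (\textbf{H}1), which via the smooth maps $G,J$ (bounded on $\mathcal Q_\delta$) yields $\sup_x\|\hat V(x)-V(x)\|+\sup_x|\hat\lambda_2(x)-\lambda_2(x)|=o_p(1)$. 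Second, Theorem~\ref{UniformPath}, applied with the continuous-in-$x_0$ time horizons, giving $\sup_{x_0}\sup_{|t|\le T}\|\hat\X_{x_0}(t)-\X_{x_0}(t)\|=O_p(\sqrt{\log n/(nh^5)})=o_p(1)$. One then splits $\hat a_{x_0}(t)-a_{x_0}(t)$ into a piece obtained by replacing $(\hat f,\hat V)$ by $(f,V)$ with the argument held at $\hat\X_{x_0}(t)$, controlled by the first input and the boundedness of $\nabla f,V$, and a piece that moves the argument from $\hat\X_{x_0}(t)$ to $\X_{x_0}(t)$, controlled by the second input and the Lipschitz continuity of $x\mapsto\blangle\nabla f(x),V(x)\brangle$ on a compact neighbourhood of $\mathcal H$ (from (\textbf{F}1)--(\textbf{F}2)); analogously for $\hat\ell_{x_0}-\ell_{x_0}$.

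\emph{Step 2: uniform separation and conclusion.} Using compactness of $\mathcal G$, continuity of $(x_0,t)\mapsto\X_{x_0}(t)$ and of $x_0\mapsto\theta_{x_0}$, assumption (\textbf{F}5) (so $a_{x_0}$ changes sign transversally at $\theta_{x_0}$), assumption (\textbf{F}6) (no point with $a_{x_0}=\ell_{x_0}=0$), and $\Theta_{x_0}\cap[-T,T]=\{\theta_{x_0}\}$, I would prove by a subsequence/compactness argument that for every $\epsilon\in(0,T)$ there is $\delta=\delta(\epsilon)>0$ with, for all $x_0\in\mathcal G$: (a) $\inf\{\,|a_{x_0}(t)|\vee\ell_{x_0}(t):\ |t|\le T,\ |t-\theta_{x_0}|\ge\epsilon\,\}\ge\delta$ (a failing sequence would produce a point of $\Theta_{x_0^*}\cap[-T,T]\setminus\{\theta_{x_0^*}\}$, or a point violating (\textbf{F}6)), and (b) $a_{x_0}(\theta_{x_0}-\epsilon)$ and $a_{x_0}(\theta_{x_0}+\epsilon)$ have opposite signs, $|a_{x_0}(\theta_{x_0}\pm\epsilon)|\ge\delta$, and $\ell_{x_0}(t)\le-\delta$ for $|t-\theta_{x_0}|\le\epsilon$. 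Fixing $\epsilon$, taking $\delta$ from Step~2 and working on the event $E_n=\{\sup_{x_0}\sup_{|t|\le T}(|\hat a_{x_0}(t)-a_{x_0}(t)|+|\hat\ell_{x_0}(t)-\ell_{x_0}(t)|)<\delta/2\}$ (probability $\to1$ by Step~1): by (b) and the intermediate value theorem $\hat a_{x_0}$ has a zero in $(\theta_{x_0}-\epsilon,\theta_{x_0}+\epsilon)$ where $\hat\ell_{x_0}<0$, so $\wh\Theta_{x_0}\cap(\theta_{x_0}-\epsilon,\theta_{x_0}+\epsilon)\ne\emptyset$; and by (a), for every $t\in[-T,T]$ with $|t-\theta_{x_0}|\ge\epsilon$ one has $|\hat a_{x_0}(t)|\ge\delta/2>0$ or $\hat\ell_{x_0}(t)\ge\delta/2>0$, so $\wh\Theta_{x_0}\cap\{|t-\theta_{x_0}|\ge\epsilon\}=\emptyset$. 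Hence every element of $\wh\Theta_{x_0}$, in particular $\hat\theta_{x_0}$, lies within $\epsilon$ of $\theta_{x_0}$; as $\epsilon>0$ was arbitrary, $\sup_{x_0\in\mathcal G}|\hat\theta_{x_0}-\theta_{x_0}|=o_p(1)$.

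\emph{Main obstacle.} The delicate point is not the convergence but the selection rule: $\hat\theta_{x_0}$ is the estimated root closest to the origin, so one must exclude spurious estimated roots sitting on the ``far side'' of the integral curve that could be picked instead of the root near $\theta_{x_0}$. This forces one to control the effective search window $[-T,T]$, i.e. to know that over this window the integral curves emanating from $\mathcal G$ meet $\mathcal L$ only at $\theta_{x_0}$ and remain in $\mathcal H$, and that the unrestrictedly defined $\hat\theta_{x_0}$ falls in this window with probability tending to one. A secondary technical point is the compositional nature of $\hat a_{x_0}(t)=\blangle\nabla\hat f(\hat\X_{x_0}(t)),\hat V(\hat\X_{x_0}(t))\brangle$, where both the function-level error and the argument-level error (through Theorem~\ref{UniformPath}, used uniformly in $x_0$ with non-constant time horizons) must be propagated together.
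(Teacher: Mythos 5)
Your plan is essentially the same as the paper's: restrict attention to a finite window of $t$-values, prove uniform (over $x_0\in\mathcal G$ and $t$ in the window) convergence of $\hat a_{x_0}(t)=\blangle\nabla\hat f(\hat\X_{x_0}(t)),\hat V(\hat\X_{x_0}(t))\brangle$ and of $\hat\lambda_2(\hat\X_{x_0}(t))$ to their population versions (via the standard kernel-estimator bounds plus Theorem~\ref{UniformPath}), then use a compactness/equicontinuity argument to get a uniform separation property of the population functions (sign change of $a_{x_0}$ at $\theta_{x_0}$ from (\textbf{F}5), positivity of $\lambda_2$ at the other zeros of $a_{x_0}$ from (\textbf{F}6), and $|a_{x_0}|$ bounded away from $0$ off those zeros), and finally combine the two to localise every element of $\wh\Theta_{x_0}$ near $\theta_{x_0}$. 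The paper's proof is structured exactly this way, through items (i)--(iv).

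The one place where your proposal departs from the paper, and where it has a concrete difficulty, is the choice of the search window. You fix a single $T$ independent of $x_0$ and want simultaneously (i) $T\ge\sup_{x_0\in\mathcal G}|\theta_{x_0}|$ so that $\theta_{x_0}\in[-T,T]$ and the estimated root near $\theta_{x_0}$ (which dominates $\hat\theta_{x_0}$ in modulus) is captured, and (ii) $\X_{x_0}([-T,T])\subset\mathcal H$ for all $x_0\in\mathcal G$ so that Theorem~\ref{UniformPath} and the kernel uniform-consistency bounds apply along the whole window. These two requirements can conflict: if $x_0=\X_{y_0}(-a^*)$ for some $y_0\in\mathcal L$ (so $\theta_{x_0}=a^*$), then $\X_{x_0}(-t)=\X_{y_0}(-a^*-t)$, and for $t$ up to $a^*$ this traverses $\X_{y_0}$ over $[-2a^*,-a^*]$, which lies outside $\mathcal G$ and is not guaranteed by (\textbf{F}2)--(\textbf{F}3) to remain inside $\mathcal H$. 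So there need not be a fixed $T\ge a^*$ with $\X_{x_0}([-T,T])\subset\mathcal H$ uniformly over $\mathcal G$. The paper's fix is to use an $x_0$-dependent window: with $\theta_{x_0}>0$, it introduces the exit time $\rho_{x_0}=\inf\{s\le\theta_{x_0}:\,\{\X_{x_0}(t):t\in[s,\theta_{x_0}]\}\subset\mathcal H\}$ and works on $[(-\theta_{x_0}-\epsilon)\vee\rho_{x_0},\,\theta_{x_0}+\epsilon]$, so the separation argument (your step 2(a)) is run only on the part of the $t$-line where the population curve is known to lie in $\mathcal H$, and the candidate $\hat\theta_{x_0}$ is nonetheless trapped because an estimated root of modulus $\le\theta_{x_0}+\epsilon$ is produced by the IVT step (your step 2(b)). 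You should replace your fixed $[-T,T]$ window by this $x_0$-dependent window; otherwise the remainder of your Step~2, which is correct in spirit, is not quite licensed as written. Everything else in your Steps~1 and~2 tracks the paper's proof closely.
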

\begin{proof}

Fix $\epsilon>0$ arbitrary (and small enough). We want to show that $P(\sup_{x_0\in\mathcal{G}}|\hat \theta_{x_0} - \theta_{x_0}|>\epsilon)\rightarrow 0$ as $n\rightarrow \infty$. Recall that by definition,
\begin{align*}
\theta_{x_0}=\argmin_{t}\{|t|: \blangle \nabla f(\X_{x_0}(t)), V(\X_{x_0}(t)) \brangle = 0, \lambda_2(\X_{x_0}(t))<0\},
\end{align*}
and a similar definition holds for $\hat \theta_{x_0}$ (with $f, \X, V$ and $\lambda_2$ replaced by our estimates). Without loss of generality, consider $\theta_{x_0}>0$. Let $\rho_{x_0}$ be the first time traveling backwards that the trajectory hits the boundary of $\cal H$, i.e., $\rho_{x_0}=\inf\big\{s: s\leq \theta_{x_0},\;\; \{\X_{x_0}(t): t\in[s, \theta_{x_0}]\}\subset \cal H\big\}$. Note that $\X_{x_0}(\rho_{x_0})\in\cal H$ since $\cal H$ is compact. Under assumption ({\bf F}3), $\theta_{x_0}$ defined in (\ref{theta_def}) is unique, therefore for $\epsilon$ small enough, $\X_{x_0}(\theta_{x_0})$ is the only filament point on $[(-\theta_{x_0}-\epsilon)\vee \rho_{x_0},\theta_{x_0}]$. For any $x_0 \in\mathcal{G}$  let $\mathcal{C}_{x_0,\epsilon}=\{t\in[(-\theta_{x_0}-\epsilon)\vee \rho_{x_0}, \theta_{x_0}-\epsilon]: \blangle\nabla f(\X_{x_0}(t)), V(\X_{x_0}(t))\brangle=0\}$. Assume for now that $\mathcal{C}_{x_0,\epsilon} \ne \emptyset.$ Note that $\mathcal{C}_{x_0,\epsilon}$ is a compact set. For $\eta>0$ let $\mathcal{C}_{x_0,\epsilon}^\eta$ denote  the $\eta$-neighborhood of $\mathcal{C}_{x_0,\epsilon}$ intersected with $[(-\theta_{x_0}-\epsilon)\vee \rho_{x_0},\theta_{x_0}-\epsilon]$. It suffices to show that
\begin{itemize}
\item[(i)] $P(\forall x_0\in \mathcal{G}, \exists \; t_{x_0}\in[\theta_{x_0}-\epsilon, \theta_{x_0}+\epsilon]$ s.t. $\blangle\nabla \hat f(\hat \X_{x_0}(t_{x_0})), \hat V(\hat \X_{x_0}(t_{x_0}))\brangle=0)\rightarrow 1$,
\item[(ii)] $P(\sup_{x_0\in\mathcal{G}, t\in [\theta_{x_0}-\epsilon, \theta_{x_0}+\epsilon]}\hat\lambda_2(\hat \X_{x_0}(t))<0)\rightarrow 1$,
\item[(iii)] There exists an $\eta>0$ such that $P(\inf_{x_0\in\mathcal{G}, t\in \mathcal{C}_{x_0,\epsilon}^\eta} \hat\lambda_2(\hat \X_{x_0}(t))>0)\rightarrow 1$ and
\item[(iv)] $P(\inf_{x_0\in\mathcal{G}, t\in [(-\theta_{x_0}-\epsilon)\vee \rho_{x_0}, \theta_{x_0}-\epsilon)\backslash\mathcal{C}_{x_0,\epsilon}^\eta} |\blangle\nabla \hat f(\hat \X_{x_0}(t)), \hat V(\hat \X_{x_0}(t))\brangle|>0) \rightarrow 1$.
\end{itemize}
By our regularity assumptions $\{a_{x_0}(t) = \blangle\nabla f (\X_{x_0}(t)), V(\X_{x_0}(t))\brangle,\,x_0 \in {\mathcal{G}}\}$ and $\{\lambda_2(\X_{x_0}(t)), x_0 \in {\mathcal{G}}\}$ are classes of equi-continuous functions on $t \in [\rho_{x_0}, \theta_{x_0} + a^*].$ Further, under assumption (\textbf{F}5) $a_{x_0}(t)$ is strictly monotonic at $\theta_{x_0}$. Also the derivatives $a^\prime_{x_0}(t), x_0 \in {\mathcal{G}}$ form an equi-continuous class of functions, and thus for any $\epsilon > 0$ sufficiently small there exists a $\delta > 0$ such that $\inf_{x_0 \in \mathcal{G}}\inf_{t \in [\theta_{x_0} - \epsilon, \theta_{x_0} + \epsilon]}\big|a^\prime_{x_0}(t)\big|  > \delta.$\\[8pt]
Moreover, since $\theta_{x_0}$ corresponds to the first filament point, we have that for any $x_0\in\mathcal{G}$ and $t\in \mathcal{C}_{x_0,\epsilon}$, $\lambda_2(\X_{x_0}(t))>0$. Note that here we have used assumption (\textbf{F}6). Since both $\mathcal{G}$ and $\mathcal{C}_{x_0,\epsilon}$ are compact, there exist $\eta, \zeta>0$ with${\displaystyle \inf_{x_0\in\mathcal{G}, t\in [(-\theta_{x_0}-\epsilon)\vee \rho_{x_0}, \theta_{x_0}-\epsilon)\backslash\mathcal{C}_{x_0,\epsilon}^\eta} }|\blangle\nabla f(\X_{x_0}(t_{x_0})),  V(\X_{x_0}(t_{x_0}))\brangle|>\zeta$ and $\inf_{x_0\in\mathcal{G}, t\in \mathcal{C}_{x_0,\epsilon}^\eta} \lambda_2( \X_{x_0}(t))>\zeta$.\\[8pt]
The proofs of (ii) - (iv) are straightforward by using uniform consistency of $\hat\lambda_2(\hat \X_{x_0}(t))$ and $\blangle\nabla \hat f(\hat \X_{x_0}(t)), \hat V(\hat \X_{x_0}(t))\brangle$ along with the fact that the corresponding theoretical quantities satisfy the inequalities corresponding to the three probability statements from (ii) - (iv). Uniform consistency of  $\blangle\nabla \hat f(\hat \X_{x_0}(t)), \hat V(\hat \X_{x_0}(t))\brangle$ can be shown by using Theorem~\ref{UniformPath} and uniform consistency results for the kernel estimators of $\nabla f$ and $V$ under our assumptions. Uniform consistency of $\hat\lambda_2(\hat \X_{x_0}(t))$ is inherited from uniform consistency of the second derivatives of the kernel estimator and uniform consistency of $\hat\X_{x_0}(t)$ by observing that $\hat\lambda_2(\hat \X_{x_0}(t)) = J(d^2(\hat f(\hat \X_{x_0}(t)))$ with $J(\cdot)$ being Lipschitz-continuous. Further details are omitted. To see (i) first observe that since $\inf_{x_0 \in \mathcal{G}}\inf_{t \in [\theta_{x_0} - \epsilon, \theta_{x_0} + \epsilon]}\big|a^\prime_{x_0}(t)\big| > \delta$ there exists an $\eta >0$ and $t_1,t_2 \in [\theta_{x_0} - \epsilon,\theta_{x_0} + \epsilon]$ with $a_{x_0}(t_1) \ge \eta$ and $a_{x_0}(t_2) \le - \eta$ for all $x_0 \in \mathcal{G}$. Uniform consistency of \\[-20pt]
\begin{align}\label{ANX}
{\wh a}_{x_0}(t)=\blangle\nabla\hat f(\hat \X_{x_0}(t)), \hat V(\hat \X_{x_0}(t))\brangle
\end{align}
as an estimator of $a_{x_0}(t)$ implies that the probability of the event $B_n := \{ \text{for all } x_0 \in {\mathcal{G}}: \hat a_{x_0}(t_1) \ge \eta/2 \text{ and } \hat a_{x_0}(t_2) \le - \eta/2\}$ tends to one as $n\to \infty.$ Since $\hat a_{x_0}(t)$ is continuous, we have that on $B_n$ that for each $x_0 \in \mathcal{G}$ there exists a $t \in [\theta_{x_0} - \epsilon, \theta_{x_0} + \epsilon]$ with $\hat a_{x_0}(t) = 0.$ This completes the proof of (\ref{ThetaConsistency1}) in case $\mathcal{C}_{x_0,\epsilon} \ne \emptyset.$ If $\mathcal{C}_{x_0,\epsilon} = \emptyset$, then we can ignore (iii) and the result follows from (i),(ii) and (iv). 
%
%
%
%
\hfill$\square$\\
\end{proof}
The above proof also shows that the probability of  $\wh \Theta_{x_0} = \emptyset$ or $\hat \theta_{x_0}$ is not unique for all $x_0\in\mathcal{G}$ tends to zero as $n\to \infty.$ We will thus only consider the case of $\wh \Theta_{x_0}\neq \emptyset$ and unique $\hat \theta_{x_0}$ in what follows. Next we derive the convergence rate of $\sup_{x_0\in\mathcal{G}}|\hat \theta_{x_0}-\theta_{x_0}|$.\\

Now we prove Proposition~\ref{ThetaRate}. For the convenience of the reader we restate this propositon here.\\

{\bf Proposition~\ref{ThetaRate}} {\em Under assumptions (\textbf{F}1)--(\textbf{F}6), (\textbf{K}1)--(\textbf{K}2) and (\textbf{H}1), we have
\begin{align*}
\sup_{x_0\in\mathcal{G}}|\hat \theta_{x_0}-\theta_{x_0}|=O_p(\alpha_n),
\end{align*}
where $\alpha_n=\sqrt{\frac{\log{n}}{nh^6}}$, and if in addition $\sup_{x_0\in\mathcal{G}}\|\nabla f(\X_{x_0}(\theta_{x_0}))\|=0$, then $\alpha_n=\sqrt{\frac{\log{n}}{nh^5}}$.\\
}

{\sc Proof.} Note that $\wh a_{x_0}(t)$ is the directional derivative of $\hat f$ at $t$ when traversing the support of $\hat f$ along the curve $\hat \X_{x_0}(t).$ By definition of a filament point we have $\wh a_{x_0}(\hat \theta_{x_0}) = 0$ for all $x_0 \in {\mathcal{G}}.$ A similar interpretation holds for the population quantity $a_{x_0}(t).$ We will use the behavior of $\wh a_{x_0}(t) - a_{x_0}(t)$ around $t =  \theta_{x_0}$ to determine the behavior of $\hat \theta_{x_0}-\theta_{x_0}$. \\[8pt]
By using chain rule and noting that $\nabla \blangle\nabla f(x), V(x)\brangle=\nabla^2 f(x)V(x)+\nabla V(x)\nabla f(x)$ we have
\begin{align}\label{BX}
a_{x_0}^\prime(t) &= V(\X_{x_0}(t))^T \nabla^2 f(\X_{x_0}(t))V(\X_{x_0}(t))+\langle \nabla f(\X_{x_0}(t)), V( \X_{x_0}(t)) \rangle_{\nabla V(\X_{x_0}(t))}\nonumber \\
& = \lambda_2(\X_{x_0}(t)) \|V(\X_{x_0}(t))\|^2 + \langle \nabla f(\X_{x_0}(t)), V( \X_{x_0}(t)) \rangle_{\nabla V(\X_{x_0}(t))}
\end{align}
and similarly
\begin{align}\label{BXn}
\wh a_{x_0}^\prime(t) =  \hat \lambda_2(\hat \X_{x_0}(t)) \|\hat V(\hat\X_{x_0}(t))\|^2 +\langle \nabla \hat f(\hat \X_{x_0}(t)), \hat V( \hat \X_{x_0}(t))\rangle_{\nabla \hat V(\hat \X_{x_0}(t))}.
\end{align}
Notice that $a_{x_0}^\prime(t) = \tilde{a}^\prime(\X_{x_0}(t))$ with $\tilde a^\prime(x)$ from (\ref{def-atildeprime}). We can write
\begin{align}
0 = \wh a_{x_0}(\hat \theta_{x_0}) &= \blangle\nabla\hat f(\hat \X_{x_0}(\hat\theta_{x_0})), \hat V(\hat \X_{x_0}(\hat\theta_{x_0}))\brangle =\wh a_{x_0}(\theta_{x_0})+ \wh a_{x_0}^\prime(\hat \xi_{x_0})(\hat\theta_{x_0}-\theta_{x_0}) \label{ttt}
\end{align}
with some $\hat \xi_{x_0}$ between $\hat \theta_{x_0} $ and $\theta_{x_0}$. We next show that for some $\eta > 0$
\begin{align}\label{hatbn-conv}
P\big(\inf_{x_0 \in {\mathcal{G}}}|\wh a_{x_0}^\prime(\hat \xi_{x_0})|  \ge \eta\big) \to 1.
\end{align}
%
%
%
To this end we prove that
\begin{align}\label{anprime-Conv}
\sup_{x_0\in\mathcal{G}}\sup_{t \in (\theta_{x_0} - \epsilon,\theta_{x_0} + \epsilon)}|\wh a_{x_0}^\prime (t)-a_{x_0}^\prime(t)|=o_p(1)\quad\text{for }\; \epsilon > 0\;\;\text{sufficiently small},\\[-25pt]\nonumber
\end{align}
and
\begin{align}\label{aprime-cont}
\tilde{a}^\prime(x) = \lambda_2(x) \|V(x)\|^2 +\langle \nabla f(x), V(x)\rangle_{\nabla V(x) }\quad\text{is uniformly continuous in $x \in {\cal H}$}.
\end{align}
This then implies (\ref{hatbn-conv}) 
by using standard arguments. Assertion (\ref{aprime-cont})  is a direct consequence of our regularity assumptions that assure continuity of $\tilde{a}^\prime(x)$ by using compactness of $\cal H$. The consistency property (\ref{anprime-Conv}) follows from uniform consistency of $\hat \X_{x_0}(t)$ as an estimator for $\X_{x_0}(t)$ (Lemma~\ref{XConsist}) and uniform consistency of  $\hat V(x)$, $\nabla \hat V(x)$, $\nabla \hat f(x)$ and $\nabla^2 \hat f(x)$ (Lemmas~\ref{LX} and \ref{VXX}) by using a continuous mapping argument or arguments similar to the ones presented in the following proof of (\ref{nablafvx}).\\[8pt]
To prove the assertion of the proposition it remains to show that
\begin{align}\label{nablafvx}
\sup_{x_0\in\mathcal{G}}|\hat a_{x_0}(\theta_{x_0})|=O_p(\alpha_n).
\end{align}
To see this write

\begin{align*}
\hat a_{x_0}(\theta_{x_0}) = \hat a_{x_0}(\theta_{x_0}) - a_{x_0}(\theta_{x_0}) &= \blangle\nabla \hat f(\hat \X_{x_0}(\theta_{x_0})),\hat V(\hat \X_{x_0}(\theta_{x_0}))\brangle - \blangle\nabla f(\X_{x_0}(\theta_{x_0})),V(\X_{x_0}(\theta_{x_0}))\brangle \\
&= \blangle\big[ \nabla \hat f(\hat \X_{x_0}(\theta_{x_0})) - \nabla f(\X_{x_0}(\theta_{x_0})) \big], \hat V(\hat \X_{x_0}(\theta_{x_0}))\brangle \\
&\hspace*{2cm} + \blangle\nabla f(\X_{x_0}(\theta_{x_0})),\big[ \hat V(\hat \X_{x_0}(\theta_{x_0}))  -V(\X_{x_0}(\theta_{x_0})) \big]\brangle.
\end{align*}

The rate (\ref{nablafvx}) now follows from the following facts:\

\begin{align}
&\sup_{x_0 \in \mathcal{G}}\big\|\nabla {\hat f}(\hat \X_{x_0}(\theta_{x_0})) - {\mathbb E}\nabla \hat f (\X_{x_0}(\theta_{x_0}))\big\| = O_p\Big( \sqrt{\frac{\log n}{nh^5}}\Big), \label{rate1}\\[5pt]
&\sup_{x_0 \in \mathcal{G}}\big\| {\mathbb E}\nabla \hat f (\X_{x_0}(\theta_{x_0})) - \nabla f(\X_{x_0}(\theta_{x_0}))\big\|  = O(h^2),\label{rate1a}\\[2pt]
&\sup_{x_0 \in \mathcal{G}}\big\|\hat V(\hat \X_{x_0}(\theta_{x_0}))  -V(\X_{x_0}(\theta_{x_0})) \big\| = O_p\Big( \sqrt{\frac{\log n}{nh^6}}\Big), \label{rate2}\\[8pt]
&\text{ both $V(\X_{x_0}(\theta_{x_0}))$ and $ \nabla f(\X_{x_0}(\theta_{x_0}))$ are bounded uniformly in $x_0 \in \mathcal{G}$.}\label{boundedness}\\[-8pt] \nonumber
\end{align}

Properties (\ref{rate1}) - (\ref{rate2}) follow from Theorem~\ref{UniformPath}, well-known properties of kernel estimators (see Lemma~\ref{LX}) and Lemma~\ref{VXX}. Property (\ref{boundedness}) follows immediately from our regularity assumptions. The proof of Proposition~\ref{ThetaRate} is complete.\hfill$\square$

\section*{Proofs of Theorem 3.4 and Lemma 3.1}
{\bf Proof of Theorem~\ref{Approx-FilaDiff}} First write with $\tilde\theta_{x_0}$ between $\theta_{x_0}$ and $\hat \theta_{x_0},$
\begin{align}
\hat \X_{x_0}(\hat\theta_{x_0})-\X_{x_0}(\theta_{x_0}) &=\hat \X_{x_0}(\hat\theta_{x_0})-\hat \X_{x_0}(\theta_{x_0})+\hat \X_{x_0}(\theta_{x_0})-\X_{x_0}(\theta_{x_0})\nonumber\\
&=\hat V(\hat \X_{x_0}(\tilde\theta_{x_0}))[\hat\theta_{x_0}-\theta_{x_0}]+\hat \X_{x_0}(\theta_{x_0})-\X_{x_0}(\theta_{x_0})\nonumber\\
&= V(\X_{x_0}(\theta_{x_0})) [\hat\theta_{x_0}-\theta_{x_0}] + \big[ \hat V(\hat \X_{x_0}(\tilde\theta_{x_0})) - V(\X_{x_0}(\theta_{x_0})) \big]\,[\hat\theta_{x_0}-\theta_{x_0}] \nonumber\\
&\hspace*{2cm}+\hat \X_{x_0}(\theta_{x_0})-\X_{x_0}(\theta_{x_0}).\label{uu1}
\end{align}
We need a convergence rate of $\sup_{x_0\in\mathcal{G}}\|\hat V(\hat \X_{x_0}(\tilde\theta_{x_0}))-V(\X_{x_0}(\theta_{x_0}))\|.$ Let $a^*$ be as in assumption ($\mathbf F$3), and $\epsilon > 0$ arbitrary. On the set $\{\sup_{x_0 \in \mathcal{G}}|\wh \theta_{x_0} - \theta_{x_0}| < a^*\} \cap \{ \sup_{x_0 \in {\mathcal{G}}; t \in [\theta_{x_0} - a*, \theta_{x_0} + a*]}\|\hat \X_{x_0}(t) - \X_{x_0}(t)\| < \epsilon\}$ we have by recalling that ${\cal H}^\epsilon$ denotes the $\epsilon$-enlargement of ${\cal H},$
\begin{align}\label{VXTheta}
&\sup_{x_0\in\mathcal{G}}\|\hat V(\hat \X_{x_0}(\tilde\theta_{x_0}))-V(\X_{x_0}(\theta_{x_0}))\|\nonumber\\
&\hspace{0.5cm}\leq\; \sup_{x_0\in\mathcal{G}, t\in[\theta_{x_0} - a*, \theta_{x_0} + a*]}\|\hat V(\hat \X_{x_0}(t))-V(\X_{x_0}(t))\|+\sup_{x_0\in\mathcal{G}}\|V(\X_{x_0}(\tilde\theta_{x_0}))-V(\X_{x_0}(\theta_{x_0}))\|\nonumber\\
&\hspace{0.5cm}\leq\; \sup_{x_0\in\mathcal{G}, t\in[\theta_{x_0} - a*, \theta_{x_0} + a*]}\|\hat V(\hat \X_{x_0}(t))-V(\X_{x_0}(t))\|+\sup_{x\in\mathcal {H}}\|\nabla V(x)V(x)\|\sup_{x_0\in\mathcal{G}}|\tilde\theta_{x_0}-\theta_{x_0}|\nonumber\\
&\hspace{0.5cm}\leq\;\sup_{x\in\mathcal {H}^\epsilon}\|\hat V(x)-V(x)\| + \sup_{x_0 \in {\mathcal{G}}\atop t \in [\theta_{x_0} - a*, \theta_{x_0} + a*]}|V(\hat \X_{x_0}(t)) - V(\X_{x_0}(t))| \nonumber \\
&\hspace*{5cm} +\sup_{x\in\mathcal {H}}\|\nabla V(x)V(x)\|\sup_{x_0\in\mathcal{G}}|\tilde\theta_{x_0}-\theta_{x_0}|  
%
=\;O_p\bigg(\sqrt{\frac{\log{n}}{nh^6}}\bigg)
\end{align}
by using Theorem~\ref{UniformPath}, Proposition~\ref{ThetaRate} and Lemma~\ref{VXX}. It follows from (\ref{uu1}), (\ref{VXTheta}), Theorem~\ref{UniformPath} and Proposition~\ref{ThetaRate} that
\begin{align*}
\sup_{x_0 \in \mathcal{G}}\big\| \big[\hat \X_{x_0}(\hat\theta_{x_0})-\X_{x_0}(\theta_{x_0})\big] -  V(\X_{x_0}(\theta_{x_0})) [\hat\theta_{x_0}-\theta_{x_0}] \big\| = O_p\Big(\textstyle{\frac{\log{n}}{nh^6}}\Big) + O_p\Big( \textstyle{\sqrt{\frac{\log{n}}{nh^5}}}\Big) = O_p\Big(\textstyle{ \sqrt{\frac{\log{n}}{nh^5}}}\Big).
\end{align*}
\hfill$\square$\\ 

{\bf Proof of Lemma~\ref{Theta}}
We continue using the notation introduced in (\ref{ANX}) - (\ref{BXn}) and (\ref{aprime-cont}). Since assumption (\textbf{F}5) implies that $0 < \inf_{x_0 \in \mathcal{G}}|a_{x_0}^\prime(\theta_{x_0})| < \infty$ (see discussion given after the assumptions) we obtain from (\ref{ttt}) and (\ref{anprime-Conv}) that
\begin{align*}
\hat \theta_{x_0}-\theta_{x_0}=-\frac{\wh a_{x_0}(\theta_{x_0})}{a_{x_0}^\prime(\theta_{x_0})}  + O_p(R_n),
\end{align*}
where $|R_n| \le \sup_{x_0\in\mathcal{G}}|\wh a_{x_0}^\prime(\hat \xi_{x_0})-a_{x_0}^\prime(x_0)|\,|\hat \theta_{x_0}-\theta_{x_0} |.$ Using Proposition~\ref{ThetaRate}, the assertion of the lemma is now a consequence of assumption ({\bf H}1) as well as\\[-15pt]
\begin{align}\label{sss2}
\sup_{x_0\in\mathcal{G}}|\wh a_{x_0}^\prime(\hat \xi_{x_0})-a_{x_0}^\prime(\theta_{x_0})| =O_p\bigg(\sqrt{\frac{\log{n}}{nh^8}}\,\bigg),\\[-25pt]\nonumber
\end{align}
%
\begin{align}
\sup_{x_0 \in \mathcal{G}}\Big|\, \wh a_{x_0}(\theta_{x_0}) - \blangle\nabla f(\X_{x_0}(\theta_{x_0})), \hat V(\X_{x_0}(\theta_{x_0}))\brangle\,\Big| = O_p\bigg(\sqrt{\frac{\log{n}}{nh^5}}\,\bigg) \label{ttt3}
\end{align}
and\\[-30pt]
\begin{align}
\sup_{x_0 \in \mathcal{G}}\Big|\, \hat \varphi_{1n}(\X_{x_0}(\theta_{x_0})) -  \frac{\blangle \nabla f(\X_{x_0}(\theta_{x_0})), \hat V(\X_{x_0}(\theta_{x_0}))\brangle}{a^\prime_{x_0}(\theta_{x_0})}\,\Big| = O_p\Big(\frac{1}{nh^7} \Big). \label{ttt2}
\end{align}
To see (\ref{sss2}), observe that on $A_n(\epsilon) = \{ |\hat \theta_{x_0} - \theta_{x_0} | \le \epsilon\},\; \epsilon > 0,$ uniformly in $x_0 \in \mathcal{G}$
\begin{align*}
\big| \wh a_{x_0}^\prime(\hat \xi_{x_0}) - a_{x_0}^\prime(\theta_{x_0})\big| &\le  \sup_{t \in (\theta_{x_0} - \epsilon,\theta_{x_0} + \epsilon)}|\wh a_{x_0}^\prime(t)-a_{x_0}^\prime(t)| + \sup\limits_{\substack{|s -t| \le \epsilon,\\ s,t\in(\theta_{x_0} - \epsilon,\theta_{x_0}+\epsilon)}}|a_{x_0}^\prime(s) - a_{x_0}^\prime(t)|\\
&\le  \sup_{t \in (\theta_{x_0} - \epsilon,\theta_{x_0} + \epsilon)}|\wh a_{x_0}^\prime(t)-a_{x_0}^\prime(t)| + C \epsilon
\end{align*}
for some $C > 0$, where the last inequality follows from the fact that the derivatives of $a_{x_0}^\prime(t)$ are continuous in both $t$ and $x_0$ (note here that Lipschitz continuity of $\X_{x_0}(s)$ in $x_0$ is shown in (\ref{Lipschitz}), 
 and thus $a_{x_0}^\prime(t)$ is uniformly bounded for $t \in (\theta_{x_0} - \epsilon,\theta_{x_0} + \epsilon)$ and $x_0 \in \cal G$). Proposition~\ref{ThetaRate} 
 implies that with $\alpha_n$ from Proposition~\ref{ThetaRate} 
 we have $P\big( A_n(\alpha_n)\big) \to 1$ as $n \to \infty,$ and clearly $\alpha_n = o\Big(\sqrt{\frac{\log{n}}{nh^8}}\;\Big).$ It thus remains to show that\\[-15pt]
\begin{align}\label{sss1}
\sup_{x_0\in\mathcal{G}}\sup_{t \in (\theta_{x_0} - \epsilon,\theta_{x_0} + \epsilon)}|\wh a_{x_0}^\prime(t)-a_{x_0}^\prime(t)| = O_P\bigg(\sqrt{\frac{\log{n}}{nh^8}}\bigg).
\end{align}
We have
\begin{align}
&|\wh a_{x_0}^\prime(t) - a^\prime_{x_0}(t)| \label{ttt1}\\
&\hspace*{1cm}\le \Big|\, \big\| \hat V(\hat\X_{x_0}(t))\big\|_{\nabla^2 \hat f(\hat\X_{x_0}(t))} - \big\|V(\X_{x_0}(t))\big\|_{\nabla^2 f(\X_{x_0}(t))} \Big| \nonumber\\
%
&\hspace*{1.5cm}+\big|\blangle \nabla \hat f(\hat \X_{x_0}(t)),\,\hat V( \hat \X_{x_0}(t)) \brangle_{\nabla \hat V(\hat \X_{x_0}(t)) } - \blangle\nabla f(\X_{x_0}(t)),\, V( \X_{x_0}(t))\brangle_{\nabla V(\X_{x_0}(t))} \big| \nonumber
\end{align}
and we will derive the rate of convergence for each of the terms on the right-hand side. As for the first term, we have by using a telescoping argument:
\begin{align*}
&\Big|\, \big\| \hat V(\hat\X_{x_0}(t))\big\|_{\nabla^2 \hat f(\hat\X_{x_0}(t))} - \big\|V(\X_{x_0}(t))\big\|_{\nabla^2 f(\X_{x_0}(t))} \Big| \\
&\hspace*{2cm} \le \big|\, \blangle \hat V(\hat\X_{x_0}(t)),\,\hat V(\hat\X_{x_0}(t)) - V(\X_{x_0}(t))\brangle_{\nabla^2 \hat f(\hat\X_{x_0}(t)) }\,\big|\\
&\hspace*{3cm}+ \big|\, \blangle\hat V(\hat\X_{x_0}(t)),\,V(\X_{x_0}(t))\brangle_{\nabla^2 \hat f(\hat \X_{x_0}(t)) - \nabla^2 f(\X_{x_0}(t)) } \,\big|\\
& \hspace*{4cm}+ \big| \, \blangle \hat V(\hat\X_{x_0}(t)) - V(\X_{x_0}(t)),\,V(\X_{x_0}(t))\brangle_{\nabla^2 f(\X_{x_0}(t)) }  \big|.
\end{align*}
Now we can use similar arguments as in the proof of (\ref{nablafvx}). 
The asserted rate then is the slowest of the rates of convergence of $\sup_{x_0 \in \cal G}\sup_{t \in (\theta_{x_0} - \epsilon,\theta_{x_0}+\epsilon)}\big\|\hat \X_{x_0}(t) - \X_{x_0}(t)\big\|$ and the rates of $\sup_{x \in {\cal H}^\epsilon} \big\| \nabla^2 \hat f(x) - \nabla^2 f(x)\,\big\|_F$  and $\sup_{x \in {\cal H}^\epsilon} \big\| \hat V(x) - V(x)\,\big\|$, respectively (see Theorem 3.3 and Lemmas~\ref{LX} and~\ref{VXX}), where $\epsilon > 0$ is arbitrary. 
 The resulting rate for the last term on the right-hand side of the above inequality is $O_P\big(\sqrt{\frac{\log n}{nh^6}}\big).$ A similar argument applied to the second term on the right-hand side of (\ref{ttt1}) gives the (slower) asserted rate $O_P\big(\sqrt{\frac{\log n}{nh^8}}\big)$ in (\ref{sss1}) inherited by the rate of $\sup_{x \in {\cal H}^\epsilon}\big\|\nabla \hat V(x) - \nabla V(x)\big\|_F$ (see Lemma~\ref{VXX}), 
 which in turn is inherited from the rate of convergence of $\sup_{x \in {\cal H}^\epsilon}\big\|\nabla^2 \hat f(x) - \nabla^2 f(x)\big\|_F$. This proves (\ref{sss2}).\\

In order to see (\ref{ttt3}), first observe that
\begin{align*}
&\wh a_{x_0}(\theta_{x_0}) - \blangle\nabla f(\X_{x_0}(\theta_{x_0})), \hat V(\X_{x_0}(\theta_{x_0}))\brangle \\
&\hspace*{2cm}= \blangle\nabla \hat f(\hat \X_{x_0}(\theta_{x_0})), \hat V(\hat \X_{x_0}(\theta_{x_0}))\brangle -  \blangle\nabla f(\X_{x_0}(\theta_{x_0})), \hat V(\X_{x_0}(\theta_{x_0}))\brangle.
\end{align*}
Since $\sup_{x_0 \in \mathcal{G}}\big\|\hat \X_{x_0}(\theta_{x_0}) - \X_{x_0}(\theta_{x_0})\| = O_p\Big(\sqrt{\frac{\log n}{nh^5}}\Big)$  (Theorem~\ref{UniformPath}),  and $\sup_{x_0 \in \mathcal{G}}\big\|\nabla \hat f(\hat \X_{x_0}(\theta_{x_0})) - \nabla f(\X_{x_0}(\theta_{x_0}))\big\| = O_p\Big(\sqrt{\frac{\log n}{nh^5}}\Big) $ (see (\ref{rate1}), (\ref{rate1a}) and ({\bf H}1)), and since $d^2\hat{f}$ is uniformly consistent,  it is straightforward to see that
\begin{align*}
\sup_{x_0 \in \mathcal{G}}\Big| \wh a_{x_0}(\theta_{x_0})  -  \blangle\nabla f(\X_{x_0}(\theta_{x_0})), \hat V(\X_{x_0}(\theta_{x_0}))\brangle\Big| = O_p\Big(\sqrt{\frac{\log n}{nh^5}}\Big),
\end{align*}
This is (\ref{ttt3}). To see (\ref{ttt2}) observe that with
\begin{align*}
\widehat W_n(x)&=\langle \nabla f(x), d^2 (\hat f - f)  (x)\rangle_{ \nabla G(d^2  f(x))},\,
\end{align*}
we have $\hat \varphi_{1n}(\X_{x_0}(\theta_{x_0})) = \frac{\widehat W_n(\X_{x_0}(\theta_{x_0})) - {\mathbb E}\widehat W_n(\X_{x_0}(\theta_{x_0}))}{a^\prime_{x_0}(\theta_{x_0})},$ so that the assertion follows from
\begin{align}\label{t1}
\sup_{x_0 \in \mathcal{G}}\Big|\, {\mathbb E} \widehat W_n(\X_{x_0}(\theta_{x_0})) \,\Big| = O\Big(\frac{1}{nh^7}\Big)\\[-23pt]\nonumber
\end{align}
and\\[-25pt]
\begin{align}\label{t2}
\sup_{x_0 \in \mathcal{G}}\big|\, \widehat W_n(\X_{x_0}(\theta_{x_0})) - \blangle\nabla f(\X_{x_0}(\theta_{x_0})), \hat V(\X_{x_0}(\theta_{x_0}))\brangle\,\big| = O_p\Big(\frac{\log n}{nh^6}\Big).
\end{align}
To see (\ref{t1}) we use  $\sup_{x \in {\cal H}^\epsilon}\|{\mathbb E}d^2\hat f(x) - d^2f(x)\| = O(h^2)$, which follows by standard arguments. Since ${\mathbb E} \widehat W_n(\X_{x_0}(\theta_{x_0}))$ is a linear combination of the components of bias vector it is of the same order. Assumptions ({\bf H}1) assures that $h^2 = O(\frac{1}{nh^7}).$\\

As for (\ref{t2}), recall our notation $V(x) = G(d^2 f(x))$ and $\hat V(x) = G(d^2\hat f(x)).$ We see that
\begin{align*}
\blangle\nabla f(\X_{x_0}(\theta_{x_0})), \hat V(\X_{x_0}(\theta_{x_0}))\brangle&=\nabla f(\X_{x_0}(\theta_{x_0}))^T\;[\hat V(\X_{x_0}(\theta_{x_0}))-V(\X_{x_0}(\theta_{x_0}))]\\
&\hspace*{-4cm}=\nabla f(\X_{x_0}(\theta_{x_0}))^T\Big[ \int_0^1 \nabla G\big[\big(d^2 \hat f + \lambda d^2 (\hat f - f)\big)(\X_{x_0}(\theta_{x_0}))\big]\,d\lambda\;\Big] \;d^2 (\hat f - f)  (\X_{x_0}(\theta_{x_0})).\nonumber
\end{align*}
%
%
 %
%
Using standard arguments we obtain
\begin{align*}
&\sup_{x_0 \in \mathcal{G}}\Big|\widehat W_n(\X_{x_0}(\theta_{x_0})) - \blangle\nabla f(\X_{x_0}(\theta_{x_0})), \hat V(\X_{x_0}(\theta_{x_0}))\brangle\Big| \\
&\hspace*{3cm} = O_p\Big( \sup_{x_0 \in \mathcal{G}}\big| d^2\hat f(\X_{x_0}(\theta_{x_0})) - d^2f(\X_{x_0}(\theta_{x_0}))\big|^2 \Big) = O_p\Big(\frac{\log n}{nh^6}\Big).
\end{align*}
This completes the proof of (\ref{ThetaNormal}). To prove (\ref{ThetaPart}) where $\sup_{x_0\in\mathcal{G}}\|\nabla f(\X_{x_0}(\theta_{x_0}))\|=0$, we approximate $\hat a_{x_0}(\theta_{x_0})$ by $\blangle\nabla f(\hat\X_{x_0}(\theta_{x_0})), V(\X_{x_0}(\theta_{x_0}))\brangle$ rather than by $\blangle\nabla f(\X_{x_0}(\theta_{x_0})), V(\hat \X_{x_0}(\theta_{x_0}))\brangle$ as we did above. We also will have to deal with the bias of $\nabla \hat f(\X_{x_0}(\theta_{x_0}))$ because this is not negligible here. Let $\mu_n(x) = {\mathbb E} \nabla \hat f(x)$. Then a simple telescoping argument gives
\begin{align}
&\sup_{x_0\in\mathcal{G}}\Big|\hat a_{x_0}(\theta_{x_0})-\blangle\mu_n(\hat\X_{x_0}(\theta_{x_0})), V(\X_{x_0}(\theta_{x_0}))\brangle\Big|\nonumber \\
%
%
&\hspace{1cm}\leq \sup_{x_0\in\mathcal{G}}\Big\|\nabla \hat f(\hat \X_{x_0}(\theta_{x_0}))\Big\|\;\sup_{x_0\in\mathcal{G}}\Big\|\hat V(\hat \X_{x_0}(\theta_{x_0}))-V(\X_{x_0}(\theta_{x_0}))\Big\|\nonumber \\
%
%
&\hspace{1.5cm}+\sup_{x_0\in\mathcal{G}}\Big\|\nabla \hat f(\hat \X_{x_0}(\theta_{x_0}))-\mu_n(\hat\X_{x_0}(\theta_{x_0}))\Big\|\;\sup_{x_0\in\mathcal{G}}\Big\|V(  \X_{x_0}(\theta_{x_0}))\Big\| 
%
%
=O_p\bigg(\sqrt{\frac{\log{n}}{nh^4}}\bigg)\label{Teil1}
\end{align}
%
by using (\ref{rate1}) and (\ref{rate2}) and Lemma~\ref{LX}.
%
%
Further, a one-term Taylor expansion gives
\begin{align}
\blangle\nabla f(\hat\X_{x_0}(\theta_{x_0})), V(\X_{x_0}(\theta_{x_0}))\brangle &= \blangle\nabla f(\hat \X_{x_0}(\theta_{x_0})), V(\X_{x_0}(\theta_{x_0}))\brangle - \blangle\nabla f(\X_{x_0}(\theta_{x_0})), V(\X_{x_0}(\theta_{x_0}))\brangle\nonumber \\[3pt]
%
%
& \hspace*{-1.5cm} =  \blangle V(\X_{x_0}(\theta_{x_0})) , \hat \X_{x_0}(\theta_{x_0}) - \X_{x_0}(\theta_{x_0})\brangle_{\nabla^2 f(\X_{x_0}(\theta_{x_0}))} + r_n
\end{align}
where $ r_n = \blangle V(\X_{x_0}(\theta_{x_0})) , \hat \X_{x_0}(\theta_{x_0}) - \X_{x_0}(\theta_{x_0})\brangle_{\hat{A}_n}$ %
with $\hat A_n = \int_0^1\big[\nabla^2 f(\X_{x_0}(\theta_{x_0}) + \lambda(\X_{x_0}(\hat \theta_{x_0}) - \X_{x_0}(\theta_{x_0}))) - \nabla^2 f(\X_{x_0}(\theta_{x_0}))\big]\,d\lambda,$ and we have\\[-20pt]
\begin{align}\label{Teil2}
r_n = O_p\Big(\, \frac{\log n}{nh^5} \,\Big).
\end{align}
The rate  is uniform in $x_0 \in \mathcal{G}$ and follows from Theorem~\ref{UniformPath} and our regularity assumptions.  Using (\ref{Teil1}) and (\ref{Teil2}) the assertion now follows by using similar arguments as in the first part of the proof.\hfill$\square$\\

\section*{Proofs of Corollaries 3.1 - 3.3} 
 We first prove Corollary 3.2. It follows from Theorem 3.2 that as $n\rightarrow\infty$,
\begin{align}\label{Pointwise}
\sqrt{nh^5}(\hat\X_{x_0}(\theta_{x_0}) - \X_{x_0}(\theta_{x_0}))\rightarrow \mathscr{N}(m(\theta_{x_0}), \Sigma(\theta_{x_0}))
\end{align}
where $m(\cdot)\in\mathbb{R}^2$ and $\Sigma(\cdot)\in \mathbb{R}^{2\times2}$ satisfy the ODE
\begin{align*}
\dot{m}(t)&=\frac{\sqrt{\beta}}{2}\tilde G(t)\mathcal {B}(\X_{x_0}(t))+\tilde G(t)m(t),\\
\dot{\Sigma}(t)&=\nabla V(\X_{x_0}(t))\Sigma(t)+\Sigma(t)[\nabla V(\X_{x_0}(t))]^T\\
&\hspace{3cm}+\tilde G(t)\bigg[\int\int\Psi(\X_{x_0}(t),\tau,z)f(\X_{x_0}(t))dzd\tau\bigg]\tilde G(t)^T
\end{align*}
with $m(0)=x_0$ and $\Sigma(0)=0$. Here $\tilde G(t)$, $\mathcal {B}(\X_{x_0}(t))$ and $\Psi(\X_{x_0}(t),\tau,z)$ have been defined in Theorem 3.2.\\

Observe that  by assumption (\textbf{H}1) we have $\log n /nh^{13/2} = o(1/\sqrt{nh^5})$ and $\sqrt{nh^5} h^{2} \to \beta \ge 0.$ Thus the assertion follows from (\ref{Pointwise}) above and (\ref{XDiffApp}), 
by observing that standard arguments show that
\begin{align}\label{Teil3}
h^{-2}\big(({\mathbb E}\nabla \hat f - \nabla f)(\hat\X_{x_0}(\theta_{x_0})) \big) \to \tilde{b}(\X_{x_0}(\theta_{x_0})),
\end{align}
where $\tilde{b}(x) = \frac{1}{2}\mu_2(K)\,\big(f^{(3,0)}(x) +f^{(1,2)}(x),\,f^{(0,3)} + f^{(2,1)}(x)\big)^T.$\\

Next we prove Corollary 3.1. 
By definition of $\hat \varphi_{1n}(x)$ (see (\ref{def-a1n})) 
  the approximation  (\ref{XDiffApp2}) 
 shows that in this case the convergence properties of $\hat \X_{x_0}(\hat \theta_{x_0})-\X_{x_0}(\theta_{x_0})$ are essentially determined by the deviation $d^2\hat f(\X_{x_0}(\theta_{x_0}))-\mathbb{E}d^2 \hat f(\X_{x_0}(\theta_{x_0}))$. The behavior of this difference is well known. With $\mathbf{R}:=\mathbf{R}(d^2 K)$ where $\mathbf{R}({\cdot})$ defined in assumption (\textbf{K}2), we have that under our assumptions (e.g. see Duong et al. 2008)
\begin{align}\label{as-norm-nabla2f}
\sqrt{nh^6}(d^2\hat f(x)-\mathbb{E}d^2 \hat f(x))\rightarrow \mathscr{N}(0,f(x)\mathbf{R}) \quad \text{ as }\; n\rightarrow\infty, \quad\text{in distribution.}
\end{align}
It follows from (\ref{XDiffApp2}) 
and assumption $({\bf H}1)$ that $\hat \X_{x_0}(\hat\theta_{x_0})-\X_{x_0}(\theta_{x_0})$ has the same asymptotic distribution as $\hat \varphi_{1n}(\X_{x_0}(\theta_{x_0}))\,V(\X_{x_0}(\theta_{x_0}))$. The assertion now follows by standard arguments.\\

Corollary 3.3 
follows immediately from (\ref{XDiffApp2}) 
by observing that
%
%
\begin{align*}
O_P\Big (\frac{\log n}{nh^7}\Big) &= \sup_{x_0 \in {\cal G}}\Big|\Big( \hat \X_{x_0}(\hat\theta_{x_0})-\X_{x_0}(\theta_{x_0})+\hat\varphi_{1n}(\X_{x_0}(\theta_{x_0}))V(\X_{x_0}(\theta_{x_0}))\Big)^T V^\bot(\X_{x_0}(\theta_{x_0}))\Big| \\
&= \sup_{x_0 \in {\cal G}}\Big|\Big( \hat \X_{x_0}(\hat\theta_{x_0})-\X_{x_0}(\theta_{x_0})\Big)^T V^\bot(\X_{x_0}(\theta_{x_0}))\Big|.
\end{align*}
\hfill$\square$

\section*{Proof of continuity of $\boldsymbol{\theta_{\scriptscriptstyle x_0}}$ as a function in $\boldsymbol{{\scriptstyle x_0}\in\mathcal{G}}$.} 

In our formal proofs we repeatedly apply Theorem 3.3 
with $T_{x_0}^{min} = \theta_{x_0} - a^*$ and $T_{x_0}^{max} = \theta_{x_0} + a^*$, and so we need to know that $x_0 \to \theta_{x_0}$ is continuous. We will show this here. First we indicate that filament points are continuous in the starting points of paths, namely, $x_0 \to \X_{x_0}(\theta_{x_0}),\,x_0 \in {\cal G}$ is continuous. To see this, recall that the filament ${\cal L} = \{\X_{x_0}(\theta_{x_0}),\,x_0 \in {\cal G}\} $ by assumption is a smooth curve with bounded curvature. For a given $x^*_0 \in {\cal G}$ and $\epsilon > 0$, consider the set ${\cal L}_{x^*_0}(\epsilon) = \{x \in {\cal L}:\,\|\X_{x^*_0}(\theta_{x^*_0}) - x\| \le \epsilon\}.$ This curve (piece of the filament) has finite length. Now let ${\cal G}_{x^*_0} = \{x_0 \in {\cal G}: x_0 \rightsquigarrow {\cal L}_{x^*_0}(\epsilon)\}$ denote the set of all points on integral curves $\X_{x_0}(t)$ passing through a filament point on ${\cal L}_{x_0^*}(\epsilon).$  By definition, $x^*_0 \in {\cal G}_{x_0^*}$. Since integral curves are non-overlapping, the set ${\cal G}_{x_0^*}$ is delineated by two `boundary curves' $\X^L$ and $\X^U$, say, corresponding to the filament points on the endpoints of the curve ${\cal L}_{x_0^*}(\epsilon).$ Let $\delta = \inf\{\|x_0^* - x\|, x \in \X^L \cup \X^U\}.$ Then, $\delta > 0$. (Otherwise $x_0^*$ would lie on one of the boundary curves.) With this $\delta$ we have, $\|x - x_0^*\| < \delta \Rightarrow \|\X_{x_0^*}(\theta_{x_0^*}) - \X_{x}(\theta_x)\| < \epsilon$ by construction.\\

Now let $x_0, x_0^\prime \in \mathcal{G}$ and without loss of generality assume $\theta_{x_0} \geq \theta_{x_0^\prime}$. Then we have
\begin{align*}
\Big(\X_{x_0}(\theta_{x_0}) - \X_{x_0^\prime}(\theta_{x_0^\prime})\Big)+\Big(\X_{x_0^\prime}(\theta_{x_0^\prime}) - \X_{x_0}(\theta_{x_0^\prime}) \Big) & = \X_{x_0}(\theta_{x_0}) - \X_{x_0}(\theta_{x_0^\prime})\\
& = (\theta_{x_0} - \theta_{x_0^\prime}) \bigg( \frac{1}{\theta_{x_0} - \theta_{x_0^\prime}} \int_{\theta_{x_0^\prime}}^{ \theta_{x_0}} V(\X_{x_0}(t))dt \bigg).
\end{align*}
Taking $L^2$ norm on both sides it follows that
\begin{align}\label{ThetaCont}
|\theta_{x_0} - \theta_{x_0^\prime}| \bigg\| \frac{1}{\theta_{x_0} - \theta_{x_0^\prime}} \int_{\theta_{x_0^\prime}}^{ \theta_{x_0}} V(\X_{x_0}(t))dt \bigg\| \leq \Big\|\X_{x_0}(\theta_{x_0}) - \X_{x_0^\prime}(\theta_{x_0^\prime})\Big\|+\Big\|\X_{x_0^\prime}(\theta_{x_0^\prime}) - \X_{x_0}(\theta_{x_0^\prime})\Big\|.
\end{align}
Similar to (\ref{Lipschitz}), 
we can show that there exists a constant $C>0$ such that $\|\X_{x_0^\prime}(\theta_{x_0^\prime}) - \X_{x_0}(\theta_{x_0^\prime})\|\leq C\|x_0 - x_0^\prime\|$. Considering the statement above that $\X_{x_0}(\theta_{x_0})$ as a function of $x_0$ is continuous in $\mathcal{G}$, we have that there exists another constant $C^\prime>0$ such that R.H.S. of (\ref{ThetaCont}) $\leq C^\prime \|x_0 - x_0^\prime\|$. Using assumption (\textbf{F}4), we complete the proof.\hfill$\square$\\

\section*{Proof of (5.24)} 

A Taylor expansion of $A_h(x+y)$ gives\\[-15pt]
\begin{align*}
A_h(x+y) = A_h(x) +\nabla A_h(x)y+\frac{1}{2}\nabla^{\otimes2} A_h(x)y^{\otimes2}+o(\|y^{\otimes2}\|),\\[-23pt]
\end{align*}
where the little-$o$ term can be chosen to be independent of $h$ due to assumptions (\textbf{F}1)--(\textbf{F}2) and the fact that $h$ is bounded. Similarly, a Taylor expansion of $a_h(x+y)$ leads to\\[-20pt]
%
%
%
%
\begin{align}\label{AATaylor}
a_h(x+y) &= a_h(x) -a_h(x)^3A_h(x)^T \mathbf{R}\nabla A_h(x)y-\frac{1}{2}a_h(x)^3\|\nabla A_h(x)y\|_{\mathbf{R}}\nonumber\\
&\hspace{1cm}-\frac{1}{2}a_h(x)^3 A_h(x)^T \mathbf{R}\nabla^{\otimes2}A_h(x)y^{\otimes2}+o(\|y^{\otimes2}\|),
\end{align}

where again the little-$o$ term is independent of $h$ due to the same reason as above. A Taylor expansion of $\int d^2  K(x+y-s)[d^2  K(x-s)]^T ds$ about $y = 0$ gives 
%
$\int d^2  K(x+y-s)[d^2  K(x-s)]^T ds =\mathbf{R}+\frac{1}{2}\int\nabla^{\otimes2}d^2  K(s)y^{\otimes2}[d^2  K(s)]^T ds+o(\|y^{\otimes2}\|),$ 
%
so that
\begin{align}\label{PartTaylor}
&A_h(x+y)^T  \int d^2  K(x+y-s)[d^2  K(x-s)]^T ds\;A_h(x)\nonumber\\
&\hspace{1cm}=(a_h(x))^{(-2)}+(\nabla A_h(x)y)^T \mathbf{R}A_h(x)+\frac{1}{2}(\nabla^{\otimes2} A_h(x)y^{\otimes2})^T \mathbf{R}A_h(x)\nonumber\\
&\hspace*{2cm} +\frac{1}{2}A_h(x)^T \int\nabla^{\otimes2}d^2  K(s)y^{\otimes2}[d^2  K(s)]^T dsA_h(x)+o(\|y^{\otimes2}\|).
\end{align}
Plugging all these expansions into (\ref{covU}) leads to
\begin{align*}
r_h&(x+y,x)\nonumber\\
&=a_h(x+y)a_h(x)\;A_h(x+y)^T  \int d^2  K(x+y-s)[d^2  K(x-s)]^T ds\;A_h(x)\nonumber\\
&=\Big\{1-(a_h(x))^2A_h(x)^T \mathbf{R}\nabla A_h(x)y-\frac{1}{2}(a_h(x))^2\big\{(\nabla A_h(x)y)^T \mathbf{R}\nabla A_h(x)y\\
&\hspace*{1cm}+A_h(x)^T \mathbf{R}\nabla^{\otimes2}A_h(x)y^{\otimes2}\big\}+o(\|y^{\otimes2}\|)\Big\}\bigg\{1+(a_h(x))^2(\nabla A_h(x)y)^T \mathbf{R}A_h(x)\\
&\hspace*{2cm}+\frac{1}{2}(a_h(x))^2\bigg\{A_h(x)^T \int\nabla^{\otimes2}d^2  K(s)y^{\otimes2}[d^2  K(s)]^T dsA_h(x)\\
&\hspace*{3cm}+(\nabla^{\otimes2} A_h(x)y^{\otimes2})^T \mathbf{R}A_h(x))\bigg\}+o(\|y^{\otimes2}\|)\bigg\}\\
&=1-\frac{1}{2}(a_h(x))^2(\nabla A_h(x)y)^T \mathbf{R}\nabla A_h(x)y-[(a_h(x))^2A_h(x)^T \mathbf{R}\nabla A_h(x)y]^2\nonumber\\
&\hspace*{1cm}+\frac{1}{2}(a_h(x))^2A_h(x)^T \int\nabla^{\otimes2}d^2  K(s)y^{\otimes2}[d^2  K(s)]^T dsA_h(x)+o(\|y^{\otimes2}\|)\nonumber\\
&= 1-y^T\Lambda_1(h,x)y-y^T\Lambda_2(hx)y+o(\|y^{\otimes2}\|)\nonumber\\
&= 1-y\Lambda(h,x)y^T+o(\|y^{\otimes2}\|)
\end{align*}
This is (\ref{RExpression}).\hfill$\square$

\section*{Proof of the fact that (5.22) implies (5.19)  } 

In the proof of Theorem~\ref{ConfBand} as presented in the main article it is claimed that (\ref{Replace1}) follows from (\ref{sufficiency}). Here we show in some details why this in fact is the case.\\

Write a two-dimensional standard Brownian bridge $B(x), x \in [0,1]^2$ as
\begin{align*}
B(x)=W(x)-x_1x_2W(1,1),
\end{align*}
where $W$ is a two-dimensional Wiener process. Let $X = (X_1,X_2)^T$ be a random vector in ${\mathbb R}^2$. Let $\mathfrak{T}: \mathbb{R}^2\mapsto[0,1]^2$ denote the Rosenblatt transformation (Rosenblatt 1952) defined as
$$\mathfrak{T} \binom{x_1}{x_2}= \binom{x_1^\prime}{x_2^\prime} = \binom{P(X_1 \leq x_1)}{P(X_2 \leq x_2 | X_1 = x_1)}.$$
Define\\[-20pt]
\begin{align*}
&\lefts{0}{Y_n(x)}=h^{-1}\left\langle \frac{a(x)A(x)}{\sqrt{f(x)}},\int_{\mathbb{R}^2}d^2  K\Big(\frac{x-s}{h}\Big)dB(\mathfrak{T}s)\right\rangle,\\
&\lefts{1}{Y}_n(x)=h^{-1}\left\langle \frac{a(x)A(x)}{\sqrt{f(x)}},\int_{\mathbb{R}^2}d^2  K\Big(\frac{x-s}{h}\Big)dW(\mathfrak{T}s)\right\rangle,\\
&\lefts{2}{Y}_n(x)=h^{-1}\left\langle \frac{a(x)A(x)}{\sqrt{f(x)}},\int_{\mathbb{R}^2}d^2  K\Big(\frac{x-s}{h}\Big)\sqrt{f(s)}dW(s)\right\rangle,\\
&\lefts{3}{Y}_n(x)=h^{-1}\left\langle a(x) A(x),\int_{\mathbb{R}^2}d^2  K\Big(\frac{x-s}{h}\Big)dW(s)\right\rangle.
\end{align*}
Following the arguments in the proof of Theorem~1 in Rosenblatt (1976), we similarly have
\begin{align*}
&\sup_{x\in\mathcal{L}}|Y_n(x)-\lefts{0}{Y_n(x)}|=O_p(h^{-1}n^{-1/6}(\log{n})^{3/2}),\\
&\sup_{x\in\mathcal{L}}|\lefts{0}{Y}_n(x)-\lefts{1}{Y}_n(x)|=O_p(h),\\
&\sup_{x\in\mathcal{L}}|\lefts{2}{Y}_n(x)-\lefts{3}{Y}_n(x)|=O_p(h).
\end{align*}
The two Gaussian fields $\lefts{1}{Y}_n(x)$ and $\lefts{2}{Y}_n(x)$ have the same probability structure. Note that the Gaussian fields $\lefts{3}{Y}_n(hx)$ and $U_h(x)$ have the same probability structure on $\mathcal{H}_h$. Hence in order to prove (\ref{Replace1}) it suffices to prove (\ref{sufficiency}).\hfill$\square$

\section{Miscellaneous results}
We will also need to estimate the derivative of $V(x)$ given by 
$$\nabla V(x)=\nabla G(d^2 f(x))\nabla d^2 f(x),\quad x\in{\cal H}^\epsilon.$$ 
The corresponding plug-in kernel estimators of $\nabla V(x)$ then is 
$$\nabla \hat V(x)=\nabla G(d^2\hat f(x))\nabla d^2\hat f(x),\quad x\in{\cal H}^\epsilon.$$
\begin{lemma}\label{LX}
Under assumptions (\textbf{F}1),(\textbf{K}1)--(\textbf{K}2) and (\textbf{H}1), we have for $\epsilon > 0$ that
\begin{align*}
&\sup_{x\in {\cal H}^\epsilon}\big\|\nabla \hat f(x)-{\mathbb E}\big[\nabla \hat f(x)\big]\big\|=O_p\bigg(\sqrt{\frac{\log{n}}{nh^4}}\bigg)\\
&\sup_{x\in {\cal H}^\epsilon}\big\|\nabla^2 \hat f(x)-\nabla^2 f(x)\big\|_F=O_p\bigg(\sqrt{\frac{\log{n}}{nh^6}}\bigg),\\
&\sup_{x\in {\cal H}^\epsilon}\big\|\nabla d^2\hat f(x)-\nabla d^2 f(x)\big\|_F=O_p\bigg(\sqrt{\frac{\log{n}}{nh^8}}\bigg).
\end{align*}
\end{lemma}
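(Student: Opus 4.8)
The plan is to treat all three statements as instances of a single uniform-in-$x$ deviation bound for kernel estimators of partial derivatives of a density, and then combine a bias estimate with a stochastic-term estimate. Fix a multi-index $\alpha$ with $|\alpha| = k \in \{1,2,3\}$. The estimator of $f^{(\alpha)}$ is $\widehat{f^{(\alpha)}}(x) = \frac{1}{nh^{2+k}}\sum_{i=1}^n K^{(\alpha)}\!\big(\tfrac{x-X_i}{h}\big)$, obtained by differentiating $\hat f$ under the sum; this is legitimate since $K$ is four times differentiable by (\textbf{K}1). I would first handle the bias: by (\textbf{F}1) $f$ has bounded continuous derivatives up to order four, and by (\textbf{K}1) $K$ has support in the unit ball with vanishing first moments, so a standard Taylor expansion of $f^{(\alpha)}(x - hz)$ inside $\int K(z) f^{(\alpha)}(x-hz)\,dz$ gives $\sup_{x}\big|{\mathbb E}\,\widehat{f^{(\alpha)}}(x) - f^{(\alpha)}(x)\big| = O(h^2)$ uniformly (the $O(h)$ term drops by symmetry of $K$). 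Under (\textbf{H}1), $nh^{8}/(\log n)^3 \to \infty$, so $h^2 = o\big(\sqrt{\log n/(nh^{2+2k})}\big)$ for $k \le 3$, hence the bias is absorbed into the stochastic rate and need not appear.

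For the stochastic term, I would write
\begin{align*}
\widehat{f^{(\alpha)}}(x) - {\mathbb E}\,\widehat{f^{(\alpha)}}(x) = \frac{1}{nh^{2+k}}\sum_{i=1}^n \Big[ K^{(\alpha)}\!\big(\tfrac{x-X_i}{h}\big) - {\mathbb E}\,K^{(\alpha)}\!\big(\tfrac{x-X_i}{h}\big)\Big]
\end{align*}
and apply a uniform-in-$x$ empirical process bound over the class $\mathcal{F}_h = \{ z \mapsto K^{(\alpha)}((x-z)/h) : x \in {\cal H}^\epsilon\}$. This class is uniformly bounded (derivatives of $K$ are bounded by (\textbf{K}1)) and, because $K^{(\alpha)}$ is a fixed function of bounded variation / Lipschitz with compact support, $\mathcal{F}_h$ is VC-type with envelope and characteristics not depending on $h$; this is exactly the setting of Giné and Guillou (2002) and Einmahl and Mason (2005). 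The variance of a single summand is $O(h^2)$ (one factor $h^2$ from the change of variables $z = (x-X_i)/h$, since $f$ is bounded on a neighbourhood of ${\cal H}^\epsilon$), so the standard bound gives
\begin{align*}
\sup_{x \in {\cal H}^\epsilon}\Big| \frac{1}{n}\sum_{i=1}^n \big[\cdots\big]\Big| = O_p\Big( \sqrt{ \tfrac{h^2 \log n}{n}}\Big),
\end{align*}
and dividing by $h^{2+k}$ yields $O_p\big(\sqrt{\log n /(n h^{2+2k})}\big)$, i.e. $\sqrt{\log n/(nh^4)}$, $\sqrt{\log n/(nh^6)}$, $\sqrt{\log n/(nh^8)}$ for $k = 1,2,3$ respectively. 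For the second and third assertions I would note $\|\nabla^2\hat f - \nabla^2 f\|_F$ and $\|\nabla d^2\hat f - \nabla d^2 f\|_F$ are, up to constants, maxima of finitely many such scalar deviations (plus the negligible bias), giving the stated rates; the $\nabla^2 f$ and $\nabla d^2 f$ on the left rather than their expectations is fine precisely because the bias is of smaller order.

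The only mild obstacle is making the empirical-process step airtight uniformly in the bandwidth: one must verify that the covering numbers of $\mathcal{F}_h$ (with respect to $L_2(Q)$ for all $Q$) are bounded by $(A/\tau)^v$ with $A,v$ independent of $h$, and that the relevant moment/variance bounds feed correctly into the maximal inequality. This is routine given that $K^{(\alpha)}$ has compact support and bounded variation (so translates form a VC class), and it is essentially the same covering-number argument already carried out in the paper for the classes $\mathcal F_{j,\ell}$ in the proof of Theorem~\ref{UniformPath}; I would simply invoke Giné–Guillou (2002), Theorem~1, or van der Vaart and Wellner (1996), Theorem~2.14.1, as the paper does elsewhere. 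Everything else is bookkeeping with Taylor expansions and assumption (\textbf{H}1). $\hfill\square$
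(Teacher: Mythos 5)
The paper does not actually give a proof of this lemma: it appears in Appendix~B of the supplement under ``Miscellaneous results'' with no argument attached, evidently because the authors regard it as a standard uniform-in-bandwidth consistency result of the Gin\'e--Guillou / Einmahl--Mason type (both of which they cite). Your plan is exactly the standard route and is also the same machinery the paper deploys for the class $\mathcal{F}_{j,\ell}$ in the proof of Theorem~3.3: center and write $\widehat{f^{(\alpha)}} - f^{(\alpha)}$ as bias plus a mean-zero empirical-process term, observe that the VC characteristics of $\{z \mapsto K^{(\alpha)}((x-z)/h) : x \in \mathcal{H}^\epsilon\}$ are free of $h$ because $K^{(\alpha)}$ is a fixed compactly supported function of bounded variation, bound the summand variance by $O(h^2)$ via change of variables, and feed these into a Talagrand-type maximal inequality to get $O_p(\sqrt{\log n / (nh^{2+2k})})$.

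Two points are worth tightening. First, the blanket claim ``$h^2 = o(\sqrt{\log n/(nh^{2+2k})})$ for $k\le 3$'' fails at $k=1$: it would require $nh^8 = o(\log n)$, while ({\bf H}1) gives $nh^8/(\log n)^3 \to \infty$. This is harmless only because the first assertion of the lemma deliberately centers at ${\mathbb E}\nabla\hat f(x)$ rather than $\nabla f(x)$, so no bias bound is needed there; you should say so rather than apply the $O(h^2)$ argument to $k=1$. Second, for $|\alpha|=3$ a uniform $O(h^2)$ bias bound would require bounded fifth-order derivatives of $f$, which ({\bf F}1) does not supply (only fourth order). Under ({\bf F}1) a first-order Taylor expansion with the symmetry of $K$ gives only $o(h)$ (or $O(h\,\omega(h))$ with $\omega$ the modulus of continuity of the fourth derivatives on a compact neighborhood of $\mathcal{H}^\epsilon$). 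This is still enough: $o(h) = o(\sqrt{\log n/(nh^8)})$ because $nh^{10} = (nh^9)\,h \to 0$ under ({\bf H}1). So the conclusion stands, but the bias-order claim as written is slightly stronger than ({\bf F}1) permits for the third-derivative case.
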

The same rate holds for $\nabla^2 f(x)$ replaced by ${\mathbb E}\nabla^2 \hat f(x)$.\\

\begin{lemma}\label{VXX}
Under assumptions (\textbf{F}1)--(\textbf{F}2), (\textbf{K}1)--(\textbf{K}2) and (\textbf{H}1), we have for $\epsilon > 0$ small enough that
\begin{align*}
&\sup_{x\in {\cal H}^\epsilon}\big\|\hat V(x)- V(x)\big\|=O_p\bigg(\sqrt{\frac{\log{n}}{nh^6}}\bigg),\\
&\sup_{x\in {\cal H}^\epsilon}\big\|\nabla^2\hat f(x) \hat V(x)- \nabla^2 f(x) V(x)\big\|=O_p\bigg(\sqrt{\frac{\log{n}}{nh^6}}\bigg), \\
&\sup_{x\in {\cal H}^\epsilon}\big\|\nabla G(d^2 \hat f(x))- \nabla G(d^2  f(x))\big\|_F=O_p\bigg(\sqrt{\frac{\log{n}}{nh^6}}\bigg),  \\
&\sup_{x\in {\cal H}^\epsilon}\big\|\nabla \hat V (x)- \nabla V (x)\big\|_F=O_p\bigg(\sqrt{\frac{\log{n}}{nh^8}}\bigg),  \\
&\sup_{x\in {\cal H}^\epsilon}\big\|\nabla \hat V (x)\hat V(x)- \nabla V (x)V(x)\big\|=O_p\bigg(\sqrt{\frac{\log{n}}{nh^8}}\bigg)
\end{align*}
\end{lemma}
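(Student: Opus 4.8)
The plan is to reduce every one of the five assertions to the uniform-in-$x$ rates for the kernel estimators of $\nabla f$, $d^2 f$ and $\nabla d^2 f$ already recorded in Lemma~\ref{LX}, combined with local smoothness of the map $G$ (hence of $\nabla G$) away from the locus where the two Hessian eigenvalues coincide. First I would fix the geometry. By assumption (\textbf{F}2), $\nabla^2 f(x)$ has two distinct eigenvalues for $x\in\mathcal{H}$, so $\{d^2 f(x):x\in\mathcal{H}\}$ is a compact subset of the open set $\mathcal{Q}_{\delta_0}$ from (\ref{Q-def}) for some $\delta_0>0$; by continuity of $d^2 f$ one shrinks $\epsilon>0$ so that $\{d^2 f(x):x\in\mathcal{H}^\epsilon\}\subset\mathcal{Q}_{2\delta}$ for some $\delta>0$. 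On any set $\overline{\mathcal{Q}_\delta}\cap B(0,R)$ the functions $G$, $\nabla G$, $\nabla^2 G$ are bounded and $G,\nabla G$ are Lipschitz, because the only singularities of $G$ come from the vanishing of $\sqrt{(u-w)^2+4v^2}$, which is bounded below by $\delta$ on $\mathcal{Q}_\delta$. Finally, Lemma~\ref{LX} gives $\sup_{x\in\mathcal{H}^\epsilon}\|d^2\hat f(x)-d^2 f(x)\|=O_p(\sqrt{\log n/(nh^6)})=o_p(1)$, so the event $E_n=\{d^2\hat f(x)\in\mathcal{Q}_\delta\text{ and }\|d^2\hat f(x)\|\le R\text{ for all }x\in\mathcal{H}^\epsilon\}$ has probability tending to one; all bounds below are derived on $E_n$.

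On $E_n$, the first and third assertions are immediate from Lipschitz continuity: $\|\hat V(x)-V(x)\|=\|G(d^2\hat f(x))-G(d^2 f(x))\|\le L\|d^2\hat f(x)-d^2 f(x)\|$ and $\|\nabla G(d^2\hat f(x))-\nabla G(d^2 f(x))\|_F\le L'\|d^2\hat f(x)-d^2 f(x)\|$, both $O_p(\sqrt{\log n/(nh^6)})$ uniformly in $x$. In particular $\sup_{x\in\mathcal{H}^\epsilon}\|\hat V(x)\|=O_p(1)$, and (using also the $\nabla d^2\hat f$ rate below) $\sup_{x\in\mathcal{H}^\epsilon}\|\nabla\hat V(x)\|_F=O_p(1)$, facts used repeatedly. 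For the second assertion I would telescope $\nabla^2\hat f\,\hat V-\nabla^2 f\,V=(\nabla^2\hat f-\nabla^2 f)\hat V+\nabla^2 f\,(\hat V-V)$ and bound the two pieces by $\|\nabla^2\hat f-\nabla^2 f\|_F\sup\|\hat V\|$ and $\sup\|\nabla^2 f\|_F\,\|\hat V-V\|$, using Lemma~\ref{LX}, the just-proved rate for $\hat V-V$, and boundedness of $\nabla^2 f$ from (\textbf{F}1); both terms are $O_p(\sqrt{\log n/(nh^6)})$.

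For the fourth assertion, write $\nabla V(x)=\nabla G(d^2 f(x))\,\nabla d^2 f(x)$ and $\nabla\hat V(x)=\nabla G(d^2\hat f(x))\,\nabla d^2\hat f(x)$, and telescope as $\nabla\hat V-\nabla V=[\nabla G(d^2\hat f)-\nabla G(d^2 f)]\,\nabla d^2\hat f+\nabla G(d^2 f)\,[\nabla d^2\hat f-\nabla d^2 f]$. The first summand is at most $\|\nabla G(d^2\hat f)-\nabla G(d^2 f)\|_F\,\|\nabla d^2\hat f\|_F=O_p(\sqrt{\log n/(nh^6)})\cdot O_p(1)$, where $\|\nabla d^2\hat f\|_F\le\|\nabla d^2 f\|_F+\|\nabla d^2\hat f-\nabla d^2 f\|_F=O_p(1)$ since $nh^8/\log n\to\infty$ by (\textbf{H}1); the second summand is at most $\sup\|\nabla G(d^2 f)\|_F\,\|\nabla d^2\hat f-\nabla d^2 f\|_F=O_p(\sqrt{\log n/(nh^8)})$, which is the slower of the two and hence gives the claimed rate. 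The fifth assertion follows in the same way from $\nabla\hat V\,\hat V-\nabla V\,V=\nabla\hat V\,(\hat V-V)+(\nabla\hat V-\nabla V)\,V$, using $\sup\|\nabla\hat V\|_F=O_p(1)$, $\|\hat V-V\|=O_p(\sqrt{\log n/(nh^6)})$, boundedness of $V$, and the fourth assertion; the $\sqrt{\log n/(nh^8)}$ term again dominates.

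\textbf{Main obstacle.} There is no deep difficulty here: the genuine analytic content is entirely inside Lemma~\ref{LX} (uniform-in-$x$ rates for the derivatives of the kernel estimator, obtained by standard empirical-process / VC-type arguments in the spirit of Gin\'e and Guillou (2002) and Einmahl and Mason (2005)). The only point requiring care is the bookkeeping around the singular locus of $G$ (the set $\{(u,v,w):u=w,\ v=0\}$, where the two eigenvalues coincide): one must check that $\{d^2 f(x):x\in\mathcal{H}^\epsilon\}$ stays inside a region $\mathcal{Q}_\delta$ on which $G$ and its first two derivatives are bounded and Lipschitz, and then confine all estimates to the high-probability event $E_n$ on which $d^2\hat f$ also lands in that region. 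Once that setup is in place, each assertion is just a one- or two-term telescoping estimate combined with the rates from Lemma~\ref{LX}, with the rate determined by the slowest ingredient.
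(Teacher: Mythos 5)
Your proposal is correct and follows exactly the approach the paper implicitly relies on: the supplement never spells out a proof of Lemma B.2 but instead records the ingredients (uniform rates in Lemma B.1, the factorization $\nabla V = \nabla G(d^2f)\,\nabla d^2 f$, and the Lipschitz/boundedness properties of $G$, $\nabla G$ on $\mathcal{Q}_\delta$) that you combine in the obvious telescoping way on the high-probability event where $d^2\hat f(\mathcal{H}^\epsilon)\subset\mathcal{Q}_\delta$. Your bookkeeping of rates (including the observation that $nh^8/(\log n)^3\to\infty$ makes $\nabla d^2\hat f$ uniformly bounded in probability) is accurate, so nothing further is needed.
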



The following result shows the uniform consistency of the estimator $\hat \X_{x_0}(t)$.
\begin{lemma}\label{XConsist}
For $x_0\in \mathcal{G}$ let $T_{x_0}^{min}, T_{x_0}^{max}\geq0$ be such that $T_{x_0}^{min}+ T_{x_0}^{max}>0$  and $T_{\mathcal{G}}:=\max\{\sup_{x_0\in \mathcal{G}}T_{x_0}^{min}, \sup_{x_0\in \mathcal{G}}T_{x_0}^{max}\}<\infty$ and $\{\X_{x_0}(t),\,t \in [-T_{x_0}^{min},T_{x_0}^{max}]\} \subset {\cal H}.$ Under assumptions (\textbf{F}1)--(\textbf{F}2), (\textbf{K}1)--(\textbf{K}2) and (\textbf{H}1), we have that
\begin{align*}
\sup_{x_0\in \mathcal{G}, t\in[-T_{x_0}^{min},T_{x_0}^{max}]}\|\hat \X_{x_0}(t)-\X_{x_0}(t)\|=o_p(1).
\end{align*}
\end{lemma}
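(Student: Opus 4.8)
The plan is to compare the integral equations satisfied by $\hat\X_{x_0}$ and $\X_{x_0}$ and then apply Gronwall's inequality uniformly over the compact parameter set. First I would fix attention on the ``forward'' range $t\in[0,T^{max}_{x_0}]$; the ``backward'' range $t\in[-T^{min}_{x_0},0]$ is handled identically using the reverse integral curve $\tilde\X_{x_0}$ of (\ref{def-integral-curve-reverse}). Writing $\hat\X_{x_0}(t)=x_0+\int_0^t\hat V(\hat\X_{x_0}(s))\,ds$ and $\X_{x_0}(t)=x_0+\int_0^t V(\X_{x_0}(s))\,ds$, I would subtract and split:
\begin{align*}
\hat\X_{x_0}(t)-\X_{x_0}(t)=\int_0^t\big[\hat V(\hat\X_{x_0}(s))-V(\hat\X_{x_0}(s))\big]\,ds+\int_0^t\big[V(\hat\X_{x_0}(s))-V(\X_{x_0}(s))\big]\,ds.
\end{align*}
The first integral is bounded in norm by $T_{\mathcal{G}}\sup_{x\in\mathbb{R}^2}\|\hat V(x)-V(x)\|$, which by Lemma~\ref{VXX} (or just the uniform consistency of $d^2\hat f$ together with local Lipschitz continuity of $G$ on the compact set $\mathcal{Q}_\delta$ containing $\{d^2 f(x):x\in\mathcal{H}^\epsilon\}$) is $O_p\big(\sqrt{\log n/(nh^6)}\big)=o_p(1)$; call this bound $\epsilon_n$. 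For the second integral I would use that $V$ is globally Lipschitz by (\textbf{F}1), with constant $L:=\sup_{x\in\mathbb{R}^2}\|\nabla V(x)\|_F<\infty$, so that it is bounded by $L\int_0^t\|\hat\X_{x_0}(s)-\X_{x_0}(s)\|\,ds$.

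Combining, $\|\hat\X_{x_0}(t)-\X_{x_0}(t)\|\le \epsilon_n+L\int_0^t\|\hat\X_{x_0}(s)-\X_{x_0}(s)\|\,ds$ for all $t\in[0,T^{max}_{x_0}]$ and all $x_0\in\mathcal{G}$ (the bound $\epsilon_n$ being uniform in $x_0$), and Gronwall's inequality gives
\begin{align*}
\sup_{x_0\in\mathcal{G}}\ \sup_{t\in[0,T^{max}_{x_0}]}\|\hat\X_{x_0}(t)-\X_{x_0}(t)\|\le \epsilon_n\,e^{L\,T_{\mathcal{G}}}=o_p(1).
\end{align*}
One technical point to address before applying Gronwall is that $\hat\X_{x_0}(t)$ is well defined (the ODE has a solution) on the whole interval $[0,T^{max}_{x_0}]$ and does not escape a neighborhood of $\mathcal{H}$ where $\hat V$ inherits the regularity of $V$; this follows by a standard continuation argument: as long as $\hat\X_{x_0}(s)$ stays within $\mathcal{H}^{\epsilon_0/2}$, $\hat V$ is bounded and Lipschitz there with high probability (again by uniform consistency of $d^2\hat f$ and (\textbf{F}2)), and the a priori bound above shows the solution cannot reach the boundary of $\mathcal{H}^{\epsilon_0/2}$ on a set of probability tending to one, for $n$ large.

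I expect the main (minor) obstacle to be this last well-definedness/non-escape issue — making precise that with probability tending to one $\hat V$ is globally Lipschitz on the relevant enlarged compact set and that the estimated trajectory remains there — rather than the Gronwall estimate itself, which is routine once $\sup_x\|\hat V(x)-V(x)\|=o_p(1)$ is in hand. Everything else reduces to the uniform consistency of the second derivatives of the kernel density estimator (standard, cf.\ Lemmas~\ref{LX} and~\ref{VXX}) and the Lipschitz property of $V$ from (\textbf{F}1).
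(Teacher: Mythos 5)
Your proposal is correct and is essentially the same argument the paper uses: the paper cites the Gronwall argument from page 1584 of Koltchinskii et al.\ (2007) to obtain $\|\hat \X_{x_0}(t)-\X_{x_0}(t)\|\leq T_{x_0}^{max}\sup_{x}\|\hat V(x)-V(x)\|e^{Lt}$, then concludes via global Lipschitz continuity of $G$ and uniform consistency of $d^2\hat f$, exactly as you spell out. One small precision: $V$ need not be differentiable off $\mathcal{H}^{\epsilon_0}$ (where the Hessian eigenvalues may coincide), so the constant $L$ should be taken as the global Lipschitz constant of $V$ (which exists since $G$ is globally Lipschitz on $\mathbb{R}^3$ and $d^2 f$ has bounded first derivatives by (\textbf{F}1)), rather than $\sup_{x\in\mathbb{R}^2}\|\nabla V(x)\|_F$; with that adjustment the Gronwall step is as you describe, and the non-escape/continuation point you raise, while glossed over in the paper, is indeed the only delicate part.
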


\begin{proof}
Following the proof on page 1584 of Koltchinskii et al. (2007), we obtain that for all $x_0\in \mathcal{G}$ and $t\in[0,T_{x_0}^{max}]$ and for some constant $L>0$
\begin{align*}
\|\hat \X_{x_0}(t)-\X_{x_0}(t)\|\leq T_{x_0}^{max}\sup_{x\in\mathbb{R}^2}\|\hat V(x)-V(x)\|e^{Lt}.
\end{align*}
Therefore by Lemma~\ref{LX}, and the fact that $G$ is Lipschitz continuous (recall the definitions $\hat{V}(x) = G(d^2\hat{f}(x))$ and $V(x) = G(d^2f(x))$\,)
\begin{align*}
\sup_{x_0\in \mathcal{G}, t\in[0,T_{x_0}^{max}]}\|\hat \X_{x_0}(t)-\X_{x_0}(t)\|\leq T_{\mathcal{G}}\sup_{x\in\mathbb{R}^2}\|\hat V(x)-V(x)\|e^{LT_{\mathcal{G}}}=o_p(1).
\end{align*}
Similarly we can prove $\sup_{x_0\in \mathcal{G}, t\in[-T_{x_0}^{min},0]}\|\hat \X_{x_0}(t)-\X_{x_0}(t)\|=o_p(1)$ and therefore the lemma is proved.\hfill$\square$
\end{proof}

\subsection*{The function $G$ and some of its properties}

In what follows we show that with
\begin{align}\label{GDef}
G(u,v,w)=\left(\begin{array}{c} 2u-2w+2v-2\sqrt{(w-u)^2+4v^2} \\w-u+4v-\sqrt{(w-u)^2+4v^2} \end{array}\right)
\end{align}
we have that 
\begin{align}\label{G-and-V}
V(x)=G(d^2f(x))\quad\text{is a second eigenvector of the Hessian $\nabla^2 f(x)$}.
\end{align}
We also write $G=(G_1, G_2)^T$. Let $\lambda_2(x)$ be the second eigenvalue of $\nabla^2 f(x)$. Then $\lambda_2(x)$ is the smaller root of equation
\begin{align*}
\left|\begin{array}{cc}
f^{(2,0)}(x)-\lambda_2(x) & f^{(1,1)}(x)\\
f^{(1,1)}(x) & f^{(0,2)}(x)-\lambda_2(x)\\
\end{array}\right|
=0.
\end{align*}
Calculation shows that the second eigenvalue of  the Hessian is 
\begin{align*}
\lambda_2(x)=J(d^2 f(x))
\end{align*}
where\\[-20pt]
\begin{align}\label{FDef}
J(u,v,w)=\frac{u+w-\sqrt{(u-w)^2+4v^2}}{2}.
\end{align}
We denote $V(x)=(u,v)^T$. Since $V(x)$ is the second eigenvector of the Hessian, we have $(\nabla^2f(x)-\lambda_2(x)\mathbf{I})V(x)=0$, i.e.,
\begin{align}\label{EigenEqu}
\left(\begin{array}{cc}
f^{(2,0)}(x)-\lambda_2(x) & f^{(1,1)}(x)\\
f^{(1,1)}(x) & f^{(0,2)}(x)-\lambda_2(x)\\
\end{array}\right)
\left(\begin{array}{c}
u\\
v\\
\end{array}\right)=0.
\end{align}
Note that the two equations above are linearly dependent. Also notice that both $V_1(x):=\Big(\lambda_2(x)-f^{(0,2)}(x),\; f^{(1,1)}(x)\Big)^T$ and $V_2(x):=\Big(f^{(1,1)}(x), \; \lambda_2(x)-f^{(2,0)}(x)\Big)^T$ are solutions to one of (and therefore both of) the equations in (\ref{EigenEqu}). For any $c_1(x),\; c_2(x)\in\mathbb{R}$, if
\begin{align*}
V(x)&=c_1(x)V_1(x)+c_2(x)V_2(x),
\end{align*}
then $V(x)$ satisfies the equations in (\ref{EigenEqu}). We want to find $c_1(x)$ and $c_2(x)$ such that $V(x)\neq0$ if the two eigenvalues of $\nabla^2 f(x)$ are not equal, i.e. $f^{(2,0)}(x) \neq f^{(0,2)}(x)$ and $f^{(1,1)}(x)\neq0$. For this purpose we choose $c_1(x)\equiv4$ and $c_2(x)\equiv2$. As a result, $V(x)=G(d^2f(x))$ with $G$ defined in (\ref{GDef}), which is (\ref{G-and-V}) \\

There are other ways of choosing $c_1(x)$ and $c_2(x)$ but  all that matters for our results is that $V(x)$ is smooth and that $\|V(x)\|$ is bounded away from zero (and infinity) as long as the two eigenvalues of Hessian $\nabla^2 f(x)$ are distinct.\\

{\em Lipschitz continuity of $G$ and $\nabla G$.} First observe that by assumption (\textbf{F}2), there exists a $\delta>0$ such that $\{d^2 f(x): x\in {\cal H}\}\subset \mathcal {Q}_{\delta}$ where
\begin{align}\label{Q-def}
\mathcal {Q}_{\delta}=\{(u,v,w)\in\mathbb{R}^3:|u-w|>\delta\;\; \textrm{or} \;\; |v|>\delta\},
\end{align}
since two eigenvalues of a $2\times2$ symmetric matrix are equal iff the matrix is a scaled identity matrix (see also the discussion given after the assumptions in the paper). \\

To verify Lipschitz continuity of $G(u,v,w)$  on $\mathbb{R}^3$, it suffices to notice that
\begin{align*}
&\Big|\sqrt{(u_1-w_1)^2+4v_1^2}-\sqrt{(u_2-w_2)^2+4v_2^2}\Big|\\
&=\frac{\big|(u_1-u_2-w_1+w_2)(u_1-w_1+u_2-w_2)+4(v_1-v_2)(v_1+v_2)\big|}{\sqrt{(u_1-w_1)^2+4v_1^2}+\sqrt{(u_2-w_2)^2+4v_2^2}}\\
&\leq |u_1-u_2|+|w_1-w_2|+2|v_1-v_2|.
\end{align*}
As for Lipschitz continuity of $\nabla G(u,v,w)$ on $\mathcal {Q}_{\delta}$, it suffices to notice that the components of $\nabla^2 G_i (u,v,w)$ are all bounded on $\mathcal {Q}_{\delta}$, where $((u-w)^2+4v^2)^{3/2}$ in the denominator is bounded away from zero on $\mathcal {Q}_{\delta}$.
\end{appendix}
\end{document}